\tikzset{>=latex}
\tikzset{cross/.style={cross out, draw=black, fill=none, minimum size=2*(#1-\pgflinewidth), inner sep=0pt, outer sep=0pt}, cross/.default={2pt}}
\DeclareFontFamily{OT1}{pzc}{}
\DeclareFontShape{OT1}{pzc}{m}{it}{<-> s * [1.10] pzcmi7t}{}
\DeclareMathAlphabet{\mathpzc}{OT1}{pzc}{m}{it}
\DeclareFontFamily{U}{mathx}{\hyphenchar\font45}
\DeclareFontShape{U}{mathx}{m}{n}{<-> mathx10}{}
\DeclareSymbolFont{mathx}{U}{mathx}{m}{n}
\DeclareMathAccent{\widebar}{0}{mathx}{"73}
\newtheorem{prop}{Proposition}[section]
\newtheorem{thm}[prop]{Theorem}
\newtheorem{cor}[prop]{Corollary}
\newtheorem{lem}[prop]{Lemma}
\newtheorem{thmintro}{Theorem}
\theoremstyle{definition}
\newtheorem{defi}[prop]{Definition}
\newtheorem{con}[prop]{Construction}
\newtheorem{expl}[prop]{Example}
\newtheorem*{defiintro}{Definition}
\theoremstyle{remark}
\newtheorem{rem}[prop]{Remark}
\newtheorem*{remintro}{Remark}
\begin{document}
\numberwithin{figure}{section}
\numberwithin{table}{section}

\title[Theta functions, broken lines and $2$-marked log GW]{Theta functions, broken lines and $2$-marked log Gromov-Witten invariants}

\author{Tim Graefnitz} 
\address{\tiny Tim Gr\"afnitz, University of Cambridge, DPMMS, Wilberforce Road, Cambridge, CB3 0WB, UK}
\email{tim.graefnitz@gmx.de}

\begin{abstract}
Theta functions were defined for varieties with effective anticanonical divisor \cite{GHS} and are related to certain punctured Gromov-Witten invariants \cite{ACGS2}. In this paper we show that in the case of a log Calabi-Yau surface $(X,D)$ with smooth very ample anticanonical divisor we can circumvent the notion of punctured Gromov-Witten invariants and relate theta functions and their multiplicative structure to certain $2$-marked log Gromov-Witten invariants. This is a natural extension of the correspondence between wall functions and $1$-marked log Gromov-Witten invariants \cite{Gra}.
\end{abstract}

\maketitle

\setcounter{tocdepth}{1}

\renewcommand{\baselinestretch}{0.9}\normalsize
\tableofcontents
\renewcommand{\baselinestretch}{1.25}\normalsize

\section*{Introduction}									

Classically theta functions were defined in the theory of elliptic curves. Mumford generalized their definition to Abelian varieties and studied degenerations of such varieties \cite{Mum}. In \cite{GHK} theta functions were used to construct the mirror to certain maximally degenerated log Calabi-Yau pairs. They naturally fit within the Gross-Siebert program of mirror symmetry \cite{GSdataI}\cite{GSdataII}\cite{GSreconstruction}. It was shown in \cite{GScanonical}, Theorem 4.5, that these theta functions are linked to certain \textit{punctured Gromov-Witten invariats} \cite{ACGS2}. This leads to an \textit{intrinsic mirror symmetry} construction \cite{GSintrinsic}, where the mirror to a Calabi-Yau variety or log Calabi-Yau pair is constructed from its theta functions or, equivalently, its punctured invariants.

In this paper we consider the case complementary to \cite{GHK} of a log Calabi-Yau pair $(X,D)$ with $D$ a \textit{smooth} divisor. We restrict to the case of a variety $X$ with very ample anticanonical bundle to obtain a combinatorial description. Moreover, we restrict to the $2$-dimensional case. We show that in this case we can circumvent the notion of punctured invariants and relate theta functions to certain $2$-marked \textit{log Gromov-Witten invariants}.

\begin{defiintro}
For an effective curve class $\beta$ on $X$ and $p,q\in\mathbb{Z}_{>0}$ with $p+q=D\cdot\beta$ let $\beta_{p,q}$ be the class of stable log maps of class $\beta$ with two marked points of contact orders $p$ and $q$ with $D$. Let $\mathscr{M}(X,\beta_{p,q})$ be the moduli space of basic stable log maps of class $\beta_{p,q}$. By \cite{GSloggw} this a proper Deligne-Mumford stack of virtual dimension $1$ and admits a virtual fundamental class $\llbracket\mathscr{M}(X,\beta_{p,q})\rrbracket$. Let $\textup{ev} : \mathscr{M}(X,\beta_{p,q}) \rightarrow D$ be the evaluation map at the marked point of order $p$. Define the $2$-marked log Gromov-Witten invariant
\[ N_{p,q}(X,\beta) := \int_{\llbracket\mathscr{M}(X,\beta_{p,q})\rrbracket} \textup{ev}^\star[\textup{pt}] \in \mathbb{Q} \]
and write
\[ N_{p,q}(X) := \sum_\beta N_{p,q}(X,\beta), \]
where the sum is over all effective curve classes of $X$. We will omit $X$ in the notation whenever it is clear from the context. 
\end{defiintro}

For an asymptotic exponent $m$ and a point $P$ on the dual intersection complex $(B,\mathscr{P},\varphi)$ of $X$, there is a theta function defined by
\[ \vartheta_m(P) = \sum_{\mathfrak{b}\in\mathfrak{B}_m(P)} a_{\mathfrak{b}} z^{m_{\mathfrak{b}}} \]
The sum is over \textit{broken lines} -- piecewise linear maps $\mathfrak{b} : [0,\infty) \rightarrow B$ with a monomial $a_iz^{m_i}$ attached to every linear segment such that $m_i=(\widebar{m}_i,h_i)$ with $\widebar{m}_i$ parallel to the segment and $a_iz^{m_i}$, $a_{i+1}z^{m_{i+1}}$ are related by wall crossing in $\mathscr{S}$.

By \cite{GHS}, Theorem 3.24, theta functions generate a ring with multiplication rule
\[ \vartheta_p(P) \cdot \vartheta_q(P) = \sum_{r=0}^\infty \alpha_{p,q}^r(P) \vartheta_r(P) \]
with structure constants
\[ \alpha_{p,q}^r(P) = \sum_{\substack{(\mathfrak{b}_1,\mathfrak{b}_2)\in\mathfrak{B}_p(P)\times\mathfrak{B}_q(P) \\ m_{\mathfrak{b}_1}+m_{\mathfrak{b}_2}= \ m}} a_{\mathfrak{b}_1}a_{\mathfrak{b}_2}. \]

Let $P$ be a point infinitely far away from the bounded maximal cell of $B$. Let $m_{\textup{out}}$ be the unique unbounded direction of $B$ and write $x=z^{(-m_{\textup{out}},-1)}$ and $t=z^{(0,1)}$. Note that at $P$ we have $\varphi(-m_{\textup{out}})=-1$, so $x$ has $t$-order zero.

\begin{thmintro}
\label{thm:main}
We have
\[ \vartheta_q(P) = x^{-q} + \sum_{p=1}^\infty pN_{p,q}t^{p+q}x^p. \]
\end{thmintro}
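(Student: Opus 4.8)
The plan is to evaluate $\vartheta_q(P)$ directly from its broken-line definition, exploiting the explicit combinatorics of the consistent wall structure $\mathscr{S}$ attached to $(X,D)$ together with the tropical correspondence of \cite{Gra}, which expresses the wall functions of $\mathscr{S}$ in terms of $1$-marked log Gromov--Witten invariants. The key observation is that placing $P$ infinitely far out along the unbounded direction simultaneously forces the contributing broken lines into a rigid shape and supplies the point constraint that is hidden in $\textup{ev}^\star[\textup{pt}]$.

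First I would recall the geometry of $(B,\mathscr{P},\varphi)$: there is a single unbounded direction $m_{\textup{out}}$ emanating from the bounded maximal cell, and $P$ sits in the asymptotic chamber, infinitely far in this direction. The broken lines contributing to $\vartheta_q(P)$ carry asymptotic monomial of weight $q$ in direction $m_{\textup{out}}$. Exactly one of them never bends, and tracking its exponent through $\varphi$ shows it contributes the leading monomial $x^{-q}=z^{(qm_{\textup{out}},q)}$. Every other broken line bends finitely often at the walls of $\mathscr{S}$, and homogeneity of $\varphi$ forces its terminal monomial to have the form $z^{(-pm_{\textup{out}},q)}=t^{p+q}x^{p}$ for some $p\ge 1$, with the $t$-grading recording $p+q=D\cdot\beta$. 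Hence the coefficient of $t^{p+q}x^{p}$ in $\vartheta_q(P)$ equals $\sum_{\mathfrak b}a_{\mathfrak b}$, the sum over broken lines whose terminal direction is $-pm_{\textup{out}}$.

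The heart of the argument is to identify this broken-line sum with $pN_{p,q}$. I would read each such broken line as the tropicalization of a basic stable log map of class $\beta_{p,q}$: the weight-$q$ asymptotic end corresponds to the unconstrained contact point of order $q$ with $D$, the weight-$p$ terminal end forced to reach the fixed point $P$ corresponds to the order-$p$ marked point together with the constraint $\textup{ev}^\star[\textup{pt}]$, and the bends at walls record how the image curve degenerates under tropicalization. Invoking the decomposition/gluing formula for log Gromov--Witten invariants \cite{GSloggw} and the wall-function correspondence of \cite{Gra}, the product of wall-crossing contributions collected along $\mathfrak b$ should match the virtual count of the corresponding log maps, and summing over all $\mathfrak b$ with fixed terminal weight $p$ yields $pN_{p,q}$; the factor $p$ is the lattice length of the terminal end, exactly the multiplicity produced by the gluing formula at the point-constrained marked point. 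Summing over $p\ge 1$ then gives the claimed identity.

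The main obstacle is precisely this last matching: showing that the purely combinatorial product of wall-crossing automorphisms along a broken line equals the virtual contribution of the associated stratum of $\mathscr{M}(X,\beta_{p,q})$, and that the local multiplicities assemble with the correct global factor $p$. This demands comparing the tropical multiplicity of the broken line with the gluing formula, verifying that no lower-dimensional boundary strata contribute spuriously, and confirming that the choice of $P$ pulls back to a genuine point class on $D$ under the evaluation map. By contrast, the enumeration of broken lines and the exponent and degree bookkeeping are routine once the explicit shape of $\mathscr{S}$ is in hand.
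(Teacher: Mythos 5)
Your proposal follows essentially the same route as the paper: placing $P$ far out in an unbounded chamber forces all contributing broken lines to end parallel to $m_{\textup{out}}$, the exponent bookkeeping yields the terms $x^{-q}$ and $t^{p+q}x^p$, and the coefficient sum $\sum_{\mathfrak{b}} a_{\mathfrak{b}}$ is identified with $pN_{p,q}$ via the decomposition/gluing (degeneration) formula together with the wall-function correspondence of \cite{Gra} --- precisely the paper's chain of Proposition \ref{prop:parallel}, Proposition \ref{prop:p+q}, Corollary \ref{cor:broken}, Proposition \ref{prop:broken} and Theorem \ref{thm:degmax}. The only cosmetic difference is that the paper formalizes your ``read the broken line as a tropicalization'' step through the disk-completion map $\mu:\mathfrak{H}_{p,q}(P)\to\mathfrak{B}_{p,q}(P)$, so that $a_{\mathfrak{b}}$ arises as a sum of tropical multiplicities over the finitely many completions of $\mathfrak{b}$ rather than being attached to a single stable log map.
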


The reason for this equation is as follows. As we move $P$ away from the bounded maximal cell of $B$ the slope of the walls (meaning their scalar product with the unbounded direction) increases. If we move sufficiently far away all broken lines ending in $P$ have to be parallel to the unbounded direction (Proposition \ref{prop:parallel}). We can complete such a broken line to a tropical curve with two unbounded legs. One of these legs contains $P$, corresponding to a fixed point. Then the tropical correspondence theorem for log Calabi-Yau pairs with smooth divisor \cite{Gra} (more precisely, an extension of it incluing point conditions) gives a relation between broken lines and $2$-marked log Gromov-Witten invariants, leading to the above correspondence for theta functions. By a similar reason we have the following.

\begin{thmintro}
\label{thm:main2}
The multiplication rule for theta functions is given by
\[ \vartheta_p \cdot \vartheta_q = \vartheta_{p+q} + \sum_{r=0}^{\text{max}\{p,q\}-1} \alpha_{p,q}^r \vartheta_r \]
with structure constants (we define $N_{p,q}(X)=0$ when $p \leq 0$ or $q\leq 0$)
\[ \alpha_{p,q}^r = \left((p-r)N_{p-r,q} + (q-r)N_{q-r,p}\right)t^{p+q-r}. \]
Comparing this with Theorem \ref{thm:main} we obtain relations between the numbers $N_{p,q}$. These determine all $N_{p,q}$ from the invariants $N_{1,q}$ with fixed point multiplicity $1$.
\end{thmintro}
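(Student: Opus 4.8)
The plan is to treat the multiplication rule not as something to be proved from scratch but as a consequence of the ring structure of \cite{GHS} together with the explicit formula of Theorem \ref{thm:main}. Since the theta functions form a ring, there is an expansion $\vartheta_p\vartheta_q=\sum_{r\geq 0}\alpha_{p,q}^r\vartheta_r$; the key point is that Theorem \ref{thm:main} exhibits each $\vartheta_r$ at $P$ as $\vartheta_r=x^{-r}+\sum_{a\geq 1}aN_{a,r}t^{a+r}x^a$, a Laurent series whose unique most negative power is $x^{-r}$ with coefficient $1$. Hence $\{\vartheta_r\}_{r\geq 0}$ is a triangular (``leading term'') basis, and for every $r\geq 0$ the coefficient $\alpha_{p,q}^r$ equals simply the coefficient of $x^{-r}$ in the product $\vartheta_p\vartheta_q$, since no $\vartheta_s$ with $s\neq r$ contributes to $x^{-r}$. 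So I would first substitute Theorem \ref{thm:main} into $\vartheta_p\vartheta_q$ and read off coefficients of powers of $x$.

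Extracting the structure constants is then pure bookkeeping. The most negative power of $\vartheta_p\vartheta_q$ is $x^{-(p+q)}$, arising only from $x^{-p}\cdot x^{-q}$, with coefficient $1$ and $t$-order $0$; this forces $\alpha_{p,q}^{p+q}=1$ and $\alpha_{p,q}^r=0$ for all $r>p+q$. For $\max\{p,q\}\leq r<p+q$ there is no term of order $x^{-r}$ in the product at all, so $\alpha_{p,q}^r=0$ in this range as well. For $0\leq r\leq\max\{p,q\}-1$ the order $x^{-r}$ receives exactly two contributions, from $x^{-p}\cdot aN_{a,q}t^{a+q}x^a$ with $a=p-r$ and from $x^{-q}\cdot aN_{a,p}t^{a+p}x^a$ with $a=q-r$, giving $\alpha_{p,q}^r=\bigl((p-r)N_{p-r,q}+(q-r)N_{q-r,p}\bigr)t^{p+q-r}$, the convention $N_{p,q}=0$ for $p\leq 0$ or $q\leq 0$ absorbing the boundary cases. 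This establishes the displayed multiplication rule together with the stated range of summation.

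With the $\alpha_{p,q}^r$ now known, the identity $\vartheta_p\vartheta_q=\vartheta_{p+q}+\sum_r\alpha_{p,q}^r\vartheta_r$ holds in full, so its coefficients at the positive powers $x^c$ ($c\geq 1$) must also agree, and this is where the relations among the $N_{p,q}$ come from. It is cleanest to work with $M_{p,q}:=pN_{p,q}$, the actual coefficients appearing in Theorem \ref{thm:main}, since then the quadratic cross terms collapse: comparing the coefficient of $x^c$ on both sides and cancelling the common factor $t^{c+p+q}$ yields
\[ M_{c+p,q}+M_{c+q,p}+\sum_{a=1}^{c-1}M_{a,p}M_{c-a,q}=M_{c,p+q}+\sum_{r=1}^{\max\{p,q\}-1}M_{c,r}\bigl(M_{p-r,q}+M_{q-r,p}\bigr), \]
which I denote $(\star)$. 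The structural observation I would isolate next is that every term of $(\star)$ is homogeneous of total index-degree $D:=c+p+q$, the only degree-$D$ \emph{linear} terms being $M_{c+p,q}$, $M_{c+q,p}$ and $M_{c,p+q}$, while the two sums are quadratic in $M$'s of strictly smaller degree.

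This sets up an induction on $D$, and solvability is the step I expect to be the main obstacle. Writing $M_i:=M_{i,D-i}$ for the degree-$D$ invariants, the degree-$D$ part of $(\star)$ reads $M_{c+p}+M_{c+q}-M_c=L_{c,p,q}$, where $L_{c,p,q}$ is known by the inductive hypothesis. The difficulty is that these linear relations are not obviously sufficient to determine $M_2,\dots,M_{D-1}$ from the single base value $M_1=M_{1,D-1}=N_{1,D-1}$. The resolution I would use is to specialize $q=1$, giving $M_{D-1}+M_{c+1}-M_c=L_{c,D-c-1,1}$ for $c=1,\dots,D-2$, a first-order recursion whose telescoping expresses every $M_{c+1}$ through $M_1$ and the single unknown $M_{D-1}$; the instance $c=D-2$ then closes the loop, yielding $(D-1)M_{D-1}=M_1+\sum_{j=1}^{D-2}L_{j,D-j-1,1}$ and hence all remaining $M_i$. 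Running this induction from the base case $D=2$, where $M_{1,1}=N_{1,1}$ is itself of the form $N_{1,q}$, shows that the invariants $N_{1,q}$ determine all $N_{p,q}$, completing the argument.
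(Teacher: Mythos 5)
Your proposal is correct, and it splits naturally into two halves that compare differently with the paper. For the multiplication rule itself your route coincides, up to packaging, with the paper's: the paper's Theorem \ref{thm:theta} deduces the formula for $\alpha_{p,q}^r$ from the broken-line expression of \cite{GHS} for the structure constants combined with Theorem \ref{thm:degmax} and Proposition \ref{prop:broken}; since at a point $P$ deep in an unbounded chamber every broken line is either straight (contributing $x^{-p}$ resp.\ $x^{-q}$) or ends in a strictly positive power of $x$ with coefficients summing to $(p-r)N_{p-r,q}$ resp.\ $(q-r)N_{q-r,p}$, the paper's enumeration of contributing pairs of broken lines is exactly your extraction of the coefficient of $x^{-r}$ from the product of the two series of Theorem \ref{thm:main} --- your ``triangular basis'' observation is precisely the reason the \cite{GHS} definition of $\alpha_{p,q}^r(P)$ computes that coefficient. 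The genuine divergence is in the final claim. The paper disposes of it in one sentence --- powers of $\vartheta_1$ generate the theta ring --- which, made precise, reconstructs $\vartheta_{q+1}$ from $\vartheta_1\vartheta_q$ minus correction terms and therefore determines all invariants from the family $N_{p,1}$ (the coefficients of $\vartheta_1$), leaving the passage to the family $N_{1,q}$ (fixed point multiplicity $1$, which is what the theorem actually asserts) to an unproved ``or, equivalently''. Your argument --- comparing coefficients of $x^c$ for $c\geq 1$, observing that the resulting relations are homogeneous of total degree $D=c+p+q$ with linear degree-$D$ part $M_{c+p}+M_{c+q}-M_{c}$ in your notation $M_{a,b}=aN_{a,b}$, and solving the $q=1$ specialization by telescoping, closed by $(D-1)M_{D-1}=M_1+\sum_{j=1}^{D-2}L_{j,D-j-1,1}$ --- is a complete induction that determines all $N_{p,q}$ directly from the $N_{1,q}$, exactly as stated, and it exhibits explicitly that the a priori overdetermined linear system is solvable, something the paper only illustrates through its $\mathbb{P}^2$ example. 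Both approaches rest on the \cite{GHS} ring structure plus the tropical correspondence input; yours buys a rigorous, self-contained proof of the determination statement, while the paper's buys brevity and a uniform derivation of Theorems \ref{thm:main} and \ref{thm:main2} in one stroke.
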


\begin{remintro}
By \cite{GRZ}, Corollary 6.6, the invariants $N_{1,q}(X,\beta)$ agree with $1$-marked invariants $N_q(\hat{X},\pi^\star\beta-C)$ of the blow up at a point $\pi : \hat{X} \rightarrow X$, where $C$ is the exceptional divisor. Using this relation, Theorem \ref{thm:main2} above, the log-local correspondence of \cite{GGR} and results of \cite{LLW} we show in \cite{GRZ} that $\vartheta_1$ agrees with the open mirror map of outer Aganagic-Vafa branes in framing zero.
\end{remintro}

\begin{remintro}
In \cite{GSintrinsic} the structure constants were defined as
\[ \alpha_{p,q}^r = \sum_\beta N_{pq}^r(\beta)t^\beta \]
where the sum is over all effective curve classes $\beta$ of $X$ and $N_{pq}^r(\beta)$ are certain punctured Gromov-Witten invariants of class $\beta$. In particular $\alpha_{m_1,m_2}^m$ does not depend on $P$. It was shown in \cite{GScanonical}, Theorem 4.5, that this agrees with the above definition of $\alpha_{p,q}^r(P)$. In combination with Theorem \ref{thm:main2} we obtain a triangle of relations
\begin{equation*}
\begin{xy}
\xymatrix{
\alpha_{p,q}^r \ar[rr]^{\textup{\cite{GScanonical}, Theorem 4.5}} \ar[rd]_{\textup{Theorem \ref{thm:main2}}} 		& & N_{p,q}^r \ar[ll]\ar[ld] \\
					& N_{p,q} \ar[lu]\ar[ru] &
}
\end{xy}
\end{equation*}
\end{remintro}

\subsection*{Acknowledgements}
The author received support from the DFG SFB/TR 191 and the ERC Advanced Grant MSAG.

\section{Toric degenerations and wall structures}					

Tropical geometry is a piecewise linear version of algebraic geometry, hence naturally linked to combinatorics. The most natural class of varieties on which to consider tropical geometry are toric varieties. They admit a combinatorial description in terms of the orbits of their torus action. This can be made explicit in terms of a fan or, dually and given a polarization, a ``momentum'' polytope. A similar combinatorial description for a non-toric variety $X$ is obtained via a toric degeneration $\mathfrak{X}$ -- a degeneration whose central fiber $X_0$ is a union of toric varieties glued along toric divisors and such that the family is strictly semistable away from a codimension $2$ subset. See \cite{GSdataI} for more details.

The dual intersection complex $(B,\mathscr{P})$ of $\mathfrak{X}$ is obtained by gluing together the fans of the irreducible components of $X_0$ according to the combinatorics of their intersection. Locally at a vertex such a fan induces an affine structure on $B$. An affine structure on maximal cells is given by the local structure of the family $\mathfrak{X}$ at the corresponding point. This is an affine toric variety defined by a cone over a polytope, and this polytope gives the affine structure on the maximal cell. These affine structures need not fit together, so $B$ is an affine manifolds with singularities. It is glued from (possibly unbounded) polytopes and $\mathscr{P}$ is a collection of these. A polarization of $X$ corresponds to a strictly convex piecewise affine function $\varphi$ on $(B,\mathscr{P})$. 

Similarly, the intersection complex $(\check{B},\check{\mathscr{P}},\check{\varphi})$ is obtained by gluing together momentum polytopes of the irreducible components of $X_0$. To do so, we need a polarization on $X$. In this case the piecewise affine function $\check{\varphi}$ describes the family $\mathfrak{X}$ locally: it is the upper convex hull of $\check{\varphi}$.

\begin{expl}
A toric degeneration of $(\mathbb{P}^2,E)$, for $E$ an elliptic curve, is given by
\vspace{-3mm}
\begin{eqnarray*}
\mathfrak{X} &=& \{XYZ=t^3(W+f_3)\} \subset \mathbb{P}(1,1,1,3)\times\mathbb{A}^1 \rightarrow \mathbb{A}^1, \\
\mathfrak{D} &=& \{W=0\} \subset \mathfrak{X}.
\end{eqnarray*}
Here $W$ is the variable in $\mathbb{P}(1,1,1,3)$ of weight $3$, $t$ is the variable of $\mathbb{A}^1$ and the map is given by projection to $t$. Moreover, $f_3$ is a polynomial in $X,Y,Z$, general in the sense that $XYZ/t^3-f_3$ is nonsingular for general $t$.

Indeed, for $t\neq 0$ we can resolve for $W=XYZ/t^3-f_3$, so the general fiber of $\mathfrak{X}$ is $X=\mathbb{P}^2$ and the general fiber of $\mathfrak{D}$ is an elliptic curve $E$ by the generality condition on $f_3$. For $t=0$ we have $XYZ=0$, so $X_0$ is a union of three $\mathbb{P}(1,1,3)$ glued along toric divisors. Figure \ref{fig:P2} shows the intersection complex and its dual for this toric degeneration.
\end{expl}

\begin{figure}[h!]
\centering
\begin{tikzpicture}[scale=1.9]
\coordinate[fill,circle,inner sep=1.2pt,label=below:${W}$] (0) at (0,0);
\coordinate[fill,circle,inner sep=1.2pt,label=below:${Z}$] (1) at (-1,-1);
\coordinate[fill,circle,inner sep=1.2pt,label=below:${X}$] (2) at (2,-1);
\coordinate[fill,circle,inner sep=1.2pt,label=above:${Y}$] (3) at (-1,2);
\coordinate[fill,cross,inner sep=3pt,rotate=45] (1a) at (-0.5,-0.5);
\coordinate[fill,cross,inner sep=3pt,rotate=63.43] (2a) at (1,-0.5);
\coordinate[fill,cross,inner sep=3pt,rotate=26.57] (3a) at (-0.5,1);
\draw (1) -- (2) -- (3) -- (1);
\draw (0) -- (1a);
\draw (0) -- (2a);
\draw (0) -- (3a);
\draw[dashed] (1a) -- (1);
\draw[dashed] (2a) -- (2);
\draw[dashed] (3a) -- (3);
\coordinate[fill,circle,inner sep=1.2pt] (a) at (0,-1);
\coordinate[fill,circle,inner sep=1.2pt] (b) at (1,-1);
\coordinate[fill,circle,inner sep=1.2pt] (c) at (-1,0);
\coordinate[fill,circle,inner sep=1.2pt] (d) at (-1,1);
\coordinate[fill,circle,inner sep=1.2pt] (e) at (1,0);
\coordinate[fill,circle,inner sep=1.2pt] (f) at (0,1);
\draw[->] (0.5,1.5) node[right]{$\varphi=1$} to [bend right=10] (-0.2,1.2);
\draw[->] (0.2,0.2) node[right]{$\varphi=0$} to [bend right=10] (0,0);
\draw[<->] (2,0.5) -- (2.8,0.5);
\end{tikzpicture}
\begin{tikzpicture}[scale=1.3]
\coordinate[fill,circle,inner sep=1.2pt] (1) at (1,0);
\coordinate[fill,circle,inner sep=1.2pt] (2) at (0,1);
\coordinate[fill,circle,inner sep=1.2pt] (3) at (-1,-1);
\coordinate (1a) at (2.5,0);
\coordinate (2a) at (0,2.5);
\coordinate (3a) at (-2.5,-2.5);
\coordinate (12) at (0.5,0.5);
\coordinate (12a) at (0.5,2.5);
\coordinate (12b) at (2.5,0.5);
\coordinate (23) at (-0.5,0);
\coordinate (23a) at (-0.5,2.5);
\coordinate (23b) at (-2.5,-2);
\coordinate (31) at (0,-0.5);
\coordinate (31a) at (2.5,-0.5);
\coordinate (31b) at (-2,-2.5);
\draw (1) -- (2) -- (3) -- (1);
\draw (1) -- (1a);
\draw (2) -- (2a);
\draw (3) -- (3a);
\fill[opacity=0.2] (12) -- (12a) -- (12b) -- (12);
\fill[opacity=0.2] (23) -- (23a) -- (23b) -- (23);
\fill[opacity=0.2] (31) -- (31a) -- (31b) -- (31);
\draw[dashed] (12a) -- (12) -- (12b);
\draw[dashed] (23a) -- (23) -- (23b);
\draw[dashed] (31a) -- (31) -- (31b);
\draw (12) node[fill,cross,inner sep=1pt,rotate=45]{};
\draw (23) node[fill,cross,inner sep=1pt,rotate=63.43]{};
\draw (31) node[fill,cross,inner sep=1pt,rotate=26.57]{};
\draw (0.1,0) node{$\varphi=0$};
\coordinate[fill,circle,inner sep=1.2pt,label=below:${\varphi=1}$] (1c) at (2,0);
\coordinate[fill,circle,inner sep=1.2pt,label=below:${\varphi=1}$] (2c) at (0,2);
\coordinate[fill,circle,inner sep=1.2pt,label=below:${\varphi=1}$] (3c) at (-2,-2);
\end{tikzpicture}
\caption{The intersection complex (left) of $(\mathbb{P}^2,E)$ and its dual (right).}
\label{fig:P2}
\end{figure}
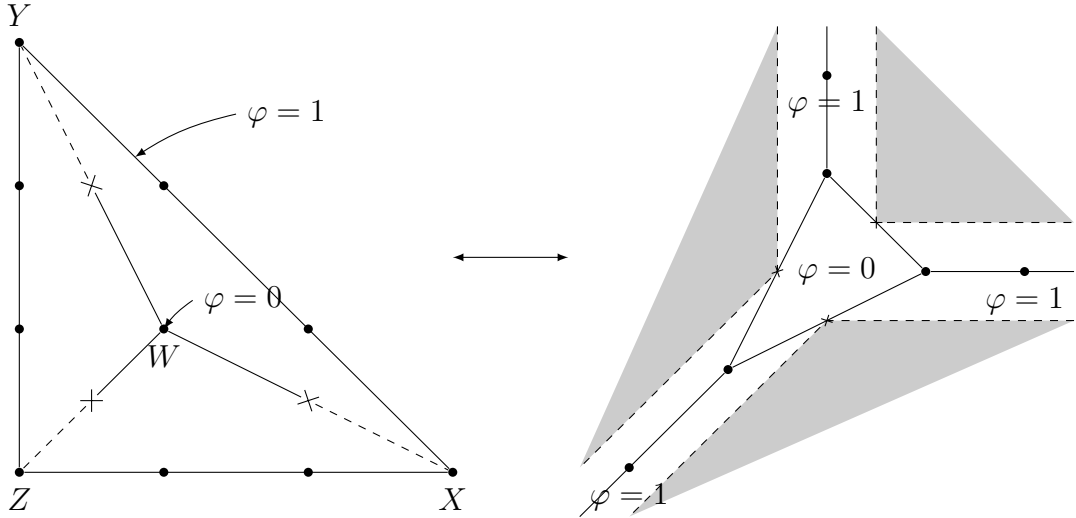

It is natural to ask whether one can go back and construct a toric degeneration $\check{\mathfrak{X}}$ from a triple $(B,\mathscr{P},\varphi)$ as above such that $(B,\mathscr{P},\varphi)$ is the intersection complex of $\check{\mathfrak{X}}$. This was solved under some assumptions in \cite{GSreconstruction} and involves the iterative construction of wall structure via scattering calculations. A \textit{wall} is a codimension $1$ polyhedral subset $\mathfrak{p}$ of $B$ with some attached function $f_{\mathfrak{p}}$ living in the ring $R_\varphi$ defined by
\begin{eqnarray*}
P_\varphi &=& \{m=(\widebar{m},h)\in M\oplus\mathbb{Z} \ | \ h \geq \varphi(\widebar{m})\}, \\
R_\varphi &=& \varprojlim\mathbb{C}[P_\varphi]/(t^k), \quad t=z^{(0,1)}.
\end{eqnarray*}
A wall defines an automorphism of some localization of $R_\varphi$ by
\[ \theta_{\mathfrak{p}}(z^{(\widebar{m},h)}) = f_{\mathfrak{p}}^{-\braket{n_{\mathfrak{p}},\widebar{m}}}z^{(\widebar{m},h)}. \]
Here $n_{\mathfrak{p}}$ is a primitive normal vector to $\mathfrak{p}$ (this involves a choice of orientation). We restrict to the $2$-dimensional case such that walls are $1$-dimensional.

The initial wall structure $\mathscr{S}_0$ consists of walls coming out of the affine singularities of $B$ with attached functions $1+z^{(\widebar{m},0)}$, where $\widebar{m}$ is the direction of $\mathfrak{p}$. Scattering means whenever two or more walls intersect they produce more walls with base at the intersection, such that the clockwise composition of the automorphisms $\theta_{\mathfrak{p}}$ is the identity. For any finite $t$-order $k$ this produces finitely many new rays, leading to a wall structure $\mathscr{S}_k$ that is ``consistent to order $k$''. The formal limit for $k\rightarrow\infty$ is denoted by $\mathscr{S}_\infty$. Note that the $t$-order of $z^{(\widebar{m},h)}$ at $x$ is $\varphi_x(-\widebar{m})+h\geq 0$, where $\varphi_x$ is a local representative of $\varphi$ at $x$, since
\[ z^{(\widebar{m},h)} = \left(z^{(-\widebar{m},\varphi_x(-\widebar{m}))}\right)^{-1} t^{\varphi_x(-\widebar{m})+h}. \]

\begin{expl}
Figure \ref{fig:P2wall} shows the wall structure consistent to $t$-order $2\cdot 3=6$ for $(\mathbb{P}^2,E)$ in two different charts of the affine structure. There are functions attached to each of these walls, but we don't show them for readability.
\end{expl}

\begin{figure}[h!]
\centering
\begin{tikzpicture}[scale=1.4]
\coordinate[fill,circle,inner sep=1.2pt] (1) at (1,0);
\coordinate[fill,circle,inner sep=1.2pt] (2) at (0,1);
\coordinate[fill,circle,inner sep=1.2pt] (3) at (-1,-1);
\coordinate (1a) at (2.5,0);
\coordinate (2a) at (0,2.5);
\coordinate (3a) at (-2.5,-2.5);
\coordinate (12) at (0.5,0.5);
\coordinate (12a) at (0.5,2.5);
\coordinate (12b) at (2.5,0.5);
\coordinate (23) at (-0.5,0);
\coordinate (23a) at (-0.5,2.5);
\coordinate (23b) at (-2.5,-2);
\coordinate (31) at (0,-0.5);
\coordinate (31a) at (2.5,-0.5);
\coordinate (31b) at (-2,-2.5);
\draw (1) -- (2) -- (3) -- (1);
\draw (1) -- (1a);
\draw (2) -- (2a);
\draw (3) -- (3a);
\fill[opacity=0.2] (12) -- (12a) -- (12b) -- (12);
\fill[opacity=0.2] (23) -- (23a) -- (23b) -- (23);
\fill[opacity=0.2] (31) -- (31a) -- (31b) -- (31);
\draw[dashed] (12a) -- (12) -- (12b);
\draw[dashed] (23a) -- (23) -- (23b);
\draw[dashed] (31a) -- (31) -- (31b);
\draw (12) node[fill,cross,inner sep=1pt,rotate=45]{};
\draw (23) node[fill,cross,inner sep=1pt,rotate=63.43]{};
\draw (31) node[fill,cross,inner sep=1pt,rotate=26.57]{};
\draw (0.5,0.5) -- (-0.5,1.5) -- (-0.3,2.5);
\draw (0.5,0.5) -- (1.5,-0.5) -- (2.5,-0.3);
\draw (0,-0.5) -- (2,0.5) -- (2.5,0.375);
\draw (0,-0.5) -- (-2,-1.5) -- (-2.5,-2.125);
\draw (-0.5,0) -- (0.5,2) -- (0.375,2.5);
\draw (-0.5,0) -- (-1.5,-2) -- (-2.125,-2.5);
\draw (1,0) -- (2.5,0.3);
\draw (1,0) -- (2.5,-0.375);
\draw (0,1) -- (0.3,2.5);
\draw (0,1) -- (-0.375,2.5);
\draw (-1,-1) -- (-2.5,-2.2);
\draw (-1,-1) -- (-2.2,-2.5);
\draw (3,0) -- (3.5,0);
\draw (3,-0.1) -- (3.5,-0.1);
\draw (0,-3);
\end{tikzpicture}
\begin{tikzpicture}[scale=1.2,rotate=90]
\coordinate[label=right:${(1,0)}$,fill,circle,inner sep = 1pt] (1) at (0,0);
\coordinate[label=left:${(0,0)}$,fill,circle,inner sep = 1pt] (2) at (0,1);
\coordinate[label=below:${(2,-3)}$,fill,circle,inner sep = 1pt] (3) at (-3,-1);
\coordinate[label=below:${(-1,-3)}$,fill,circle,inner sep = 1pt] (4) at (-3,2);
\draw (3) -- (1) -- (2) -- (4);
\draw (1) -- (3,0);
\draw (2) -- (3,1);
\draw[dashed] (3) -- (3,-1);
\draw[dashed] (4) -- (3,2);
\draw[dashed] (-1,0) -- (0,0.5) -- (-1,1);
\draw[dashed] (-3,-0.8) -- (-1.5,-0.5) -- (-1,0);
\draw[dashed] (-3,1.8) -- (-1.5,1.5) -- (-1,1);
\fill[opacity=0.2] (-3,1.8) -- (-1.5,1.5) -- (-1,1) -- (0,0.5) -- (-1,0) -- (-1.5,-0.5) -- (-3,-0.8);
\coordinate[fill,cross,line width=1pt,inner sep=2pt] (5) at (0,0.5);
\coordinate[fill,cross,line width=1pt,inner sep=2pt,rotate=26.57] (6) at (-1.5,-0.5);
\coordinate[fill,cross,line width=1pt,inner sep=2pt,rotate=-26.57] (7) at (-1.5,1.5);
\draw (0,0.5) -- (0,2);
\draw (0,0.5) -- (0,-1);
\draw (-1.5,-0.5) -- (3,1);
\draw (-1.5,1.5) -- (3,0);
\draw (-3,-1) -- (3,0);
\draw (-3,2) -- (3,1);
\draw (1.5,0.5) -- (3,0.5);
\draw (0,-0.5) -- (3,-0.5);
\draw (0,1.5) -- (3,1.5);
\draw (0,0) -- (3,0.5);
\draw (0,0) -- (3,-1);
\draw (0,1) -- (3,0.5);
\draw (0,1) -- (3,2);
\draw (-3,-1) -- (3,-1/3);
\draw (-3,2) -- (3,1+1/3);
\end{tikzpicture}
\caption{The wall structure for $(\mathbb{P}^2,E)$ of order $2\cdot 3=6$.}
\label{fig:P2wall}
\end{figure}
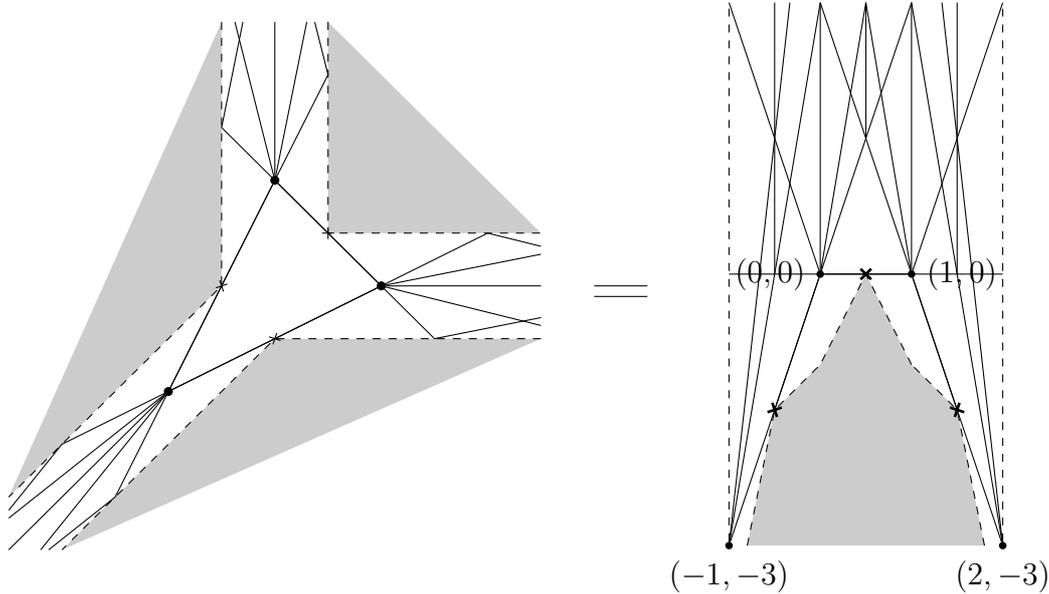

The idea of the Gross-Siebert program is that the family $\check{\mathfrak{X}}$ constructed this way is a degeneration of the mirror to $(X,D)$. As is well known, the mirror to a log Calabi-Yau pair $(X,D)$ should be a Landau-Ginzburg model, that is, a toric variety $Y$ together with a ''superpotential'' $W : Y \rightarrow \mathbb{C}$ whose critical locus is compact. It is shown in \cite{CPS} that such a superpotential can be constructed from broken lines, see \S\ref{S:broken}. For more details on these constructions see \cite{GSdataI}\cite{GSdataII}\cite{GSreconstruction}.

\section{Broken lines}								
\label{S:broken}

\begin{defi}
\label{defi:broken}
A \textit{broken line} for a wall structure $\mathscr{S}$ on $(B,\mathscr{P})$ is a proper continuous map
\[ \mathfrak{b} : (-\infty,0] \rightarrow B_0 \]
with image disjoint from any joints of $\mathscr{S}$, along with a sequence $-\infty=t_0<t_1<\cdots<t_r=0$ for some $r\geq 1$ with $\mathfrak{b}(t_i)\in|\mathscr{S}|$ for $i\leq r-1$, and for each $i=1,\ldots,r$ an expression $a_iz^{m_i}$ with $a_i\in \mathbb{C}\setminus\{0\}$, $\widebar{m}_i\in\Lambda_{\mathfrak{b}(t)}$ for any $t\in(t_{i-1},t_i)$, defined at all points of $\mathfrak{b}([t_{i-1},t_i])$, and subject to the following conditions:
\begin{compactenum}
\item $\mathfrak{b}|_{(t_{i-1},t_i)}$ is a non-constant affine map with image disjoint from $|\mathscr{S}|$, hence contained in the interior of a unique chamber $\mathfrak{u}_i$ of $\mathscr{S}$, and $\mathfrak{b}'(t)=-m_i$ for all $t\in(t_{i-1},t_i)$.
\item For each $i=1,\ldots,r-1$ the expression $a_{i+1}z^{m_{i+1}}$ is a result of transport of $a_iz^{m_i}$ from $\mathfrak{u}_i$ to $\mathfrak{u}_{i+1}$, i.e., is a term in the expansion of $\theta_{\mathfrak{u}_i\mathfrak{u}_{i+1}}(a_iz^{m_i})$.
\item $a_1=1$ and $m_1=(\widebar{m}_1,h)$ has $t$-order zero at $\mathfrak{b}(t_1)$, i.e., $h=\varphi(\widebar{m}_1)$.
\end{compactenum}
Write $a_{\mathfrak{b}}z^{m_{\mathfrak{b}}}$ for the ending monomial $a_rz^{m_r}$.
\end{defi}


Let $(B,\mathscr{P},\varphi)$ be the dual intersection complex of a pair $(X,D)$ consisting of a surface $X$ and a smooth very ample anticanonical divisor $D$. Then $B$ has one unbounded direction $m_{\textup{out}}$, since $D$ is smooth. Choose $\varphi$ such that $\varphi(m_{\textup{out}})=1$ on all unbounded cells. Let $\mathscr{S}_\infty$ be the consistent wall structure defined by $(B,\mathscr{P},\varphi)$. Then $\varphi(m_{\textup{out}})=1$ along all walls of $\mathscr{S}_\infty$, since walls are not contained in the interior of the bounded maximal cell (\cite{Gra}, Lemma 5.14). Hence, each broken line for $\mathscr{S}_\infty$ has asymptotic monomial $m_1=(qm_{\textup{out}},q)$ for some $q\in\mathbb{N}$.

\begin{defi}
For a point $P\in B_0$ let $\mathfrak{B}(P)$ be the set of be the set of broken lines for the consistent wall structure $\mathscr{S}_\infty$ with endpoint $\mathfrak{b}(0)=P$. 

For $P\in B_0$ and $q\in\mathbb{N}$ let $\mathfrak{B}_q(P)$ be the set of broken lines in $\mathfrak{B}(P)$ with asymptotic monomial $m_1=(qm_{\textup{out}},q)$. 

Write $\mathfrak{B}_q^{(k)}(P)$ for the subset of $\mathfrak{B}_q(P)$ consisting of walls such that the ending monomial $a_{\mathfrak{b}}z^{m_{\mathfrak{b}}}$ has $t$-order $\leq k$. This means if $m_{\mathfrak{b}}=(\widebar{m},h)$, then $\varphi(-\widebar{m}) \leq k-h$. 

Note that broken lines in $\mathfrak{B}^{(k)}_q(P)$ only break at walls of $\mathscr{S}_k$.
\end{defi}

\begin{defi}
For a chamber $\mathfrak{u}$ of $\mathscr{S}_k$, define the superpotential to order $k$ by
\[ W^k(P) = \sum_{\mathfrak{b}\in\mathfrak{B}^{(k)}(P)} a_{\mathfrak{b}}z^{m_{\mathfrak{b}}} \]
\end{defi}

\begin{prop}[\cite{CPS}, Lemma 4.7]
\label{prop:indep}
For a given order $k$ and chamber $\mathfrak{u}$ of $\mathscr{S}_k$, the superpotential $W^k(P)$ is the same for all general (not contained in a certain lower-dimensional subset) points $P\in\mathfrak{u}$.
\end{prop}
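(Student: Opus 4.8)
The plan is to realize $W^k$ as a locally constant function on the complement of a lower-dimensional ``bad locus'' $Z\subset\mathfrak{u}$, and then to use the consistency of $\mathscr{S}_k$ to rule out jumps of its value across $Z$. First I would record a finiteness input: since $\mathscr{S}_k$ has finitely many walls and a broken line contributing to $W^k$ breaks only at walls of $\mathscr{S}_k$ and has ending monomial of $t$-order $\leq k$, there are only finitely many \emph{combinatorial types} $\tau$ of relevant broken lines, a type recording the asymptotic monomial $m_1=(qm_{\textup{out}},q)$ together with the ordered list of walls crossed and the monomial produced at each breaking. For a fixed $\tau$ let $U_\tau\subseteq\mathfrak{u}$ be the set of endpoints $P$ admitting a valid broken line of type $\tau$, namely one whose breakings all lie in wall interiors and whose image avoids every joint. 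The elementary but crucial observation is that the ending monomial $a_{\mathfrak{b}}z^{m_{\mathfrak{b}}}$ depends only on $\tau$: the direction $\widebar{m}_{\mathfrak{b}}$ and the height $h$ are fixed by the bends recorded in $\tau$, and the coefficient $a_{\mathfrak{b}}$ is the corresponding fixed product of wall-crossing contributions. Hence $W^k$ is constant on each region cut out by the loci $\partial U_\tau$.

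Next set $Z=\bigcup_\tau\partial U_\tau$. Each $\partial U_\tau$ is the locus where a type-$\tau$ broken line degenerates, which happens precisely when one of its breaking points reaches a joint of $\mathscr{S}_k$; this is a codimension-one condition on $P$, so $Z$ is a finite union of codimension-one subsets and $\mathfrak{u}\setminus Z$ is open and dense. On $\mathfrak{u}\setminus Z$ every contributing broken line avoids all joints, so as $P$ moves in a small ball each broken line deforms continuously by sliding its breaking points along wall interiors, its type is preserved, and no broken line is created or destroyed; thus $W^k$ is locally constant on $\mathfrak{u}\setminus Z$, and in particular constant on each connected component.

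The main obstacle, and the only step requiring consistency, is to show that $W^k$ takes the \emph{same} value on adjacent components, i.e.\ that it does not jump as $P$ crosses a generic arc of $Z$. Such an arc corresponds to a single contributing broken line sweeping one of its breaking points across a joint $\mathfrak{j}$. On the two sides of the arc the parts of the broken line near $\mathfrak{j}$ differ by which walls emanating from $\mathfrak{j}$ are crossed and by how the incoming monomial scatters at them; I would organise these two families of continuations by the clockwise composition of the wall automorphisms $\theta_{\mathfrak{p}}$ around $\mathfrak{j}$. Applying this composition to the monomial entering a small neighbourhood of $\mathfrak{j}$ and expanding reproduces, on each side, exactly the ending monomials of the broken lines affected by the sweep; since $\mathscr{S}_k$ is consistent to order $k$ this composition is the identity modulo $t^{k+1}$, so the two sums agree to order $k$. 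Equivalently, the breaking of a broken line at the walls around $\mathfrak{j}$ is governed by the very scattering relation that makes $\mathscr{S}_k$ consistent. The technical heart is the bookkeeping here: matching term by term the broken lines present on one side with those on the other, including the splitting of one line into several at a breaking.

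Finally I would assemble the statement by a connectedness argument on components. Any two components of $\mathfrak{u}\setminus Z$ can be joined by a path in $\mathfrak{u}$ meeting $Z$ transversally in finitely many generic arcs, away from the finitely many codimension-two points where arcs of $Z$ meet; across each such arc the previous step shows $W^k$ is unchanged, and on each component it is constant. Therefore $W^k(P)$ is the same for all $P$ in the open dense set $\mathfrak{u}\setminus Z$, which is exactly the ``general points'' in the statement.
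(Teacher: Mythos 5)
The paper offers no proof of this statement: it is imported from \cite{CPS}, Lemma 4.7 (a reference in preparation), so there is no internal argument to compare yours against. That said, your proposal reconstructs what is the standard proof of endpoint-independence in the broken-line literature: stratify $\mathfrak{u}$ by the codimension-one locus of endpoints whose broken lines meet a joint, observe that the ending monomial depends only on the combinatorial type (so $W^k$ is locally constant off this locus), and invoke consistency of $\mathscr{S}_k$ --- the clockwise composition of the automorphisms $\theta_{\mathfrak{p}}$ around each joint is the identity to order $k$ --- to rule out jumps across the strata; this is the correct mechanism, and it is presumably the argument of the cited lemma. Two caveats: the relevant degenerations occur whenever the \emph{image} of a broken line meets a joint, not only when a breaking point reaches one (Definition \ref{defi:broken} requires the whole image to be disjoint from joints), although your analysis of the jump, phrased in terms of which walls emanating from $\mathfrak{j}$ are crossed on either side, in fact covers both situations; and the term-by-term matching you defer as ``bookkeeping'' is where essentially all the substance of the cited lemma lies, so your text is a correct strategy whose decisive step is identified but not executed.
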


\begin{prop}[\cite{GRZ}, Proposition 3.5]
\label{prop:parallel}
Let $\mathfrak{b}$ be a broken line in $\mathfrak{B}_q^{(k)}(P)$. If $P$ lies in an unbounded chamber of $\mathscr{S}_k$, then $\widebar{m}_{\mathfrak{b}}$ is parallel to $m_{\textup{out}}$.
\end{prop}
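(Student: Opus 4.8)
The plan is to use that, far out in the $m_{\textup{out}}$-direction, the finite wall structure $\mathscr S_k$ is as simple as possible. Fix an integral affine coordinate $\xi$ on the unbounded cells that increases along $m_{\textup{out}}$, so that ``far out'' means $\xi$ large. Since $\mathscr S_k$ is consistent only to the finite order $k$, it has finitely many walls and joints, so there is a radius $R$ with all joints and all \emph{bounded} walls of $\mathscr S_k$ contained in $\{\xi\le R\}$. Because $m_{\textup{out}}$ is the only unbounded direction of $B$, every unbounded cell has recession cone $\mathbb R_{\ge 0}m_{\textup{out}}$ and is therefore a strip of bounded transverse width; hence $B_0\cap\{\xi>R\}$ has bounded transverse extent $W$, and the only walls meeting $\{\xi>R\}$ are the unbounded rays, which are parallel to $m_{\textup{out}}$.

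Next I record how the direction vector changes under breaking. When $\mathfrak b$ crosses a wall $\mathfrak p$, the new monomial is a term of $f_{\mathfrak p}^{-\langle n_{\mathfrak p},\widebar m_i\rangle}a_iz^{m_i}$; as every monomial of $f_{\mathfrak p}$ is a power of $z^{(\widebar m_{\mathfrak p},\cdot)}$, one has $\widebar m_{i+1}=\widebar m_i+c\,\widebar m_{\mathfrak p}$ with $c\ge 0$. Starting from $\widebar m_1=q\,m_{\textup{out}}$ this gives
\[ \widebar m_{\mathfrak b}=q\,m_{\textup{out}}+\sum_j c_j\,\widebar m_{\mathfrak p_j},\qquad c_j\ge 0, \]
the sum over the walls at which $\mathfrak b$ breaks. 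The transverse component of $\widebar m_{\mathfrak b}$ hence vanishes unless $\mathfrak b$ breaks at a wall that is not parallel to $m_{\textup{out}}$, and by the first paragraph every such wall lies in $\{\xi\le R\}$. It therefore suffices to show that, once $P$ is far enough out, no broken line in $\mathfrak B_q^{(k)}(P)$ reaches $\{\xi\le R\}$.

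For the quantitative step I would use two uniform bounds. First, a broken line in $\mathfrak B_q^{(k)}(P)$ breaks at most $k$ times and its intermediate directions satisfy $|\widebar m_i|\le c(k,q)$ for a constant independent of $P$: the $t$-order is a non-decreasing $\mathbb Z_{\ge 0}$-valued invariant bounded by $k$ that strictly increases at each break (wall functions are $\equiv 1$ modulo $t$), and $\mathscr S_k$ is a finite structure whose wall functions are polynomials modulo $t^{k+1}$, so only finitely many monomials occur in them. Now suppose the transverse component $b$ of $\widebar m_{\mathfrak b}$ is non-zero; then $|b|\ge 1$. Tracing $\mathfrak b$ backwards from $P$ while it stays in $\{\xi>R\}$, every break is at a wall parallel to $m_{\textup{out}}$ and so leaves the transverse component equal to $b$, while the transverse coordinate moves monotonically at rate $|b|\ge 1$; since $B_0\cap\{\xi>R\}$ has transverse extent $\le W$, this portion has parameter-length $\le W$ and hence $\xi$-extent at most $c(k,q)\,W$. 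Thus $\xi_P\le R+c(k,q)\,W=:R^{\ast}$, and choosing $P$ with $\xi_P>R^{\ast}$ forces $b=0$. Finally, an arbitrary point of an unbounded chamber $\mathfrak u$ is handled by moving $P$ within $\mathfrak u$ (which has recession cone $\mathbb R_{\ge0}m_{\textup{out}}$, so contains points with $\xi>R^{\ast}$) without crossing a wall: each broken line then deforms with locally constant combinatorial type and constant $\widebar m_{\mathfrak b}$, which is the mechanism behind Proposition \ref{prop:indep}.

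The main obstacle is exactly this quantitative step: ruling out, uniformly in $P$, a broken line whose ending direction is non-parallel yet which stretches from the bounded region $\{\xi\le R\}$ to a far point $P$. It hinges on combining the uniform bound on the number of breaks and on the intermediate directions (from the $t$-order budget and the finiteness of $\mathscr S_k$) with the bounded transverse width of $B_0$ far out (from $m_{\textup{out}}$ being the unique unbounded direction).
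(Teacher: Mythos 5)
Your overall skeleton matches the paper's proof (push $P$ far out using Proposition \ref{prop:indep}, show that all walls far out are parallel to $m_{\textup{out}}$, then rule out a broken line stretching from the bounded region out to $P$ with non-parallel ending direction), but both of your key geometric steps rest on a false picture of the end of $B$. You treat $B_0\cap\{\xi>R\}$ as an affine strip of bounded transverse width sitting inside $\mathbb{R}^2$. It is not: since $D$ is smooth, the complement of the bounded maximal cell is topologically a cylinder, glued from finitely many unbounded cells, with nontrivial affine monodromy around the end. Consequently a straight ray that is not parallel to $m_{\textup{out}}$ need not leave a region of bounded transverse extent: it can spiral around the end, crossing the unbounded edges of $\mathscr{P}$ over and over again. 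So your claim that the unbounded walls of $\mathscr{S}_k$ ``are parallel to $m_{\textup{out}}$'' does not follow from finiteness plus bounded width --- indeed $\mathscr{S}_\infty$ contains non-parallel walls arbitrarily far out (the continuations of the initial walls spiral outward forever); what excludes them from $\mathscr{S}_k$ is that every crossing of an unbounded edge adds the kink of $\varphi$ there to the $t$-order, so a non-parallel wall exceeds order $k$ after finitely many crossings. This kink argument is exactly the first step of the paper's proof, and it is absent from yours.

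The same omission breaks your quantitative step, even if one grants the statement about walls. Monotone transverse motion ``at rate $|b|\ge 1$ inside transverse extent $W$'' does not bound the parameter length, because the transverse coordinate is circle-valued: the traced-back broken line could wind around the end many times, gaining $\xi$ slowly, and nothing in your argument forbids this, since during such winding it meets only parallel walls (or none) and so spends none of your break/order budget. What forbids it is, again, the kinks: each winding crosses every unbounded edge of $\mathscr{P}$ once, and each such crossing changes the $t$-order of the attached monomial by the kink of $\varphi$, so a non-parallel segment of arbitrarily large $m_{\textup{out}}$-extent forces the order beyond any bound --- this is precisely the paper's statement that such a segment ``is either parallel to $m_{\textup{out}}$ or has infinite order,'' contradicting membership in $\mathfrak{B}_q^{(k)}(P)$. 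In short, your $t$-order budget is only spent at breaks; the essential mechanism, used twice in the paper's proof, is that the budget is also spent whenever a non-parallel wall or segment crosses the unbounded edges. Incorporating that mechanism repairs both steps and essentially reproduces the paper's argument.
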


\begin{proof} 
Let $P$ be a point in an unbounded chamber $\mathfrak{u}$ of $\mathscr{S}_k$. By Proposition \ref{prop:indep} we can assume $P$ is ``near infinity'', i.e., arbitrarily far away from the bounded maximal cell. Near infinity, all walls of $\mathscr{S}_k$ are parallel: Walls that are not parallel cross the unbounded edges of $B$ arbitrarily many times, increasing the $t$-order each time, in contradiction with the wall being of order $\leq k$.

Let $\mathfrak{b}$ be a broken line ending in $P$. We say $\mathfrak{b}$ ``breaks'' if it interacts nontrivially with a wall. If $\mathfrak{b}$ does not break at all, then $m_{\mathfrak{b}}$ is an asymptotic direction, hence parallel to $m_{\text{out}}$. So assume $\mathfrak{b}$ breaks at least once. Then it has to break at least once with a wall $\mathfrak{p}$ that is not parallel to $m_{\text{out}}$, since the direction of the ``incoming'' unbounded line segment is parallel to $m_{\text{out}}$ and parallel lines don't intersect. Let $P'\in\mathfrak{p}$ be the point where this breaking happens. Then $P'$ is ``close'' to the bounded maximal cell, meaning it cannot be arbitrarily far away from it where all walls are parallel to $m_{\text{out}}$. Hence, $P$ and $P'$ are arbitrarily far away from each other. The broken line $\mathfrak{b}$ has finitely many line segments. Otherwise it would have infinite order, since its order increases with each breaking. So to connect $P'$ with $P$ there must be a line segment of $\mathfrak{b}$ whose length in $m_{\text{out}}$-direction is arbitrarily large. This means it is either parallel to $m_{\text{out}}$ or has infinite order $h_{\mathfrak{b}}$. The latter cannot be, since we assumed $h_{\mathfrak{b}}\leq k$. Hence, this segment is parallel to $m_{\text{out}}$ and ends in the region ``near infinity'' where all rays are parallel. Since parallel lines don't intersect, it must be the last segment of $\mathfrak{b}$.
\end{proof}

\begin{defi}
Let $\mathfrak{B}_{p,q}(P)$ be the set of broken lines $\mathfrak{b}$ with endpoint $\mathfrak{b}(0)=P$, asymptotic direction $\widebar{m}_1=qm_{\textup{out}}$, and ending direction $\widebar{m}_{\mathfrak{b}}=-pm_{\textup{out}}$.
\end{defi}

\begin{prop}
\label{prop:p+q}
For a broken line $\mathfrak{b}$ in $\mathfrak{B}_{p,q}(P)$ we have $m_1=(qm_{\textup{out}},q)$ and $m_{\mathfrak{b}}=(-pm_{\textup{out}},q)$. In particular, the ending monomial $a_{\mathfrak{b}}z^{m_{\mathfrak{b}}}$ has $t$-order $p+q$.
\end{prop}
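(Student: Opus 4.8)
\subsection*{Proof proposal}

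The plan is to determine the two coordinates of $m_1$ and $m_{\mathfrak{b}}$ separately: first pin down the asymptotic monomial from the defining conditions of $\mathfrak{B}_{p,q}(P)$, and then track how the $\mathbb{Z}$-component (the ``height'') of the monomial behaves under transport across walls.

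First I would identify $m_1$. By definition of $\mathfrak{B}_{p,q}(P)$ the asymptotic direction is $\widebar{m}_1 = q\,m_{\textup{out}}$, and condition (3) of Definition \ref{defi:broken} forces $m_1 = (\widebar{m}_1, h)$ to have $t$-order zero at $\mathfrak{b}(t_1)$, i.e.\ $h = \varphi(\widebar{m}_1)$. Since the asymptotic segment lies in an unbounded cell where $\varphi$ is linear with $\varphi(m_{\textup{out}}) = 1$, this gives $h = \varphi(q\,m_{\textup{out}}) = q\,\varphi(m_{\textup{out}}) = q$, so $m_1 = (q\,m_{\textup{out}}, q)$.

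The heart of the argument is that the height is conserved along the broken line. Each breaking replaces $a_i z^{m_i}$ by a term of $\theta_{\mathfrak{u}_i\mathfrak{u}_{i+1}}(a_i z^{m_i})$, which is a product of $z^{m_i}$ with monomials occurring in powers of the wall functions $f_{\mathfrak{p}}$. By construction the initial walls of $\mathscr{S}_0$ carry functions $1 + z^{(\widebar{m},0)}$ whose nonconstant monomial has vanishing $\mathbb{Z}$-component. Since scattering produces new walls whose functions are built from $\mathbb{Z}_{\geq 0}$-combinations of the incoming monomials (the tropical vertex group computation underlying \cite{GSreconstruction}), every monomial occurring in every wall function of $\mathscr{S}_\infty$ again has vanishing $\mathbb{Z}$-component. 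Consequently each transport $\theta_{\mathfrak{u}_i\mathfrak{u}_{i+1}}$ alters only the $M$-component $\widebar{m}_i$ and leaves the height fixed, so every $m_i = (\widebar{m}_i, h)$ shares the common height $h = q$ determined by $m_1$.

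Finally I would read off $m_{\mathfrak{b}}$ and its $t$-order. The ending direction is $\widebar{m}_{\mathfrak{b}} = -p\,m_{\textup{out}}$ by definition of $\mathfrak{B}_{p,q}(P)$, and height conservation yields $m_{\mathfrak{b}} = (-p\,m_{\textup{out}}, q)$. Applying the $t$-order formula $\varphi_P(-\widebar{m}) + h$ at the endpoint $P$, together with $\varphi_P(m_{\textup{out}}) = 1$, the $t$-order of $a_{\mathfrak{b}} z^{m_{\mathfrak{b}}}$ equals $\varphi_P(p\,m_{\textup{out}}) + q = p + q$. The step I expect to be the main obstacle is justifying the height-preservation rigorously: one must verify that the entire scattering process stays within the sub-monoid of monomials with zero $\mathbb{Z}$-component, which ultimately rests on the positivity of scattering and the explicit form of the initial wall functions.
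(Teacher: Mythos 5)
Your proposal is correct and follows essentially the same route as the paper: pin down $m_1=(qm_{\textup{out}},q)$ from condition (3) of Definition \ref{defi:broken}, observe that every wall function of $\mathscr{S}_\infty$ has the form $1+a_{\mathfrak{p}}z^{(m_{\mathfrak{p}},0)}$ so that transport across walls never changes the height $q$, and then read off the $t$-order $\varphi(pm_{\textup{out}})+q=p+q$. The ``main obstacle'' you flag --- that scattering stays within monomials of vanishing $\mathbb{Z}$-component --- is exactly the one-line assertion the paper makes about $\mathscr{S}_\infty$ (justified by the structure of the wall functions in \cite{Gra}, cf.\ Lemma \ref{lem:scattering}), and your inductive sketch of why scattering preserves this property is a valid way to fill it in.
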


\begin{proof}
We have $m_1=(qm_{\textup{out}},q)$ because $z^{m_1}$ has $t$-order zero by the definition of broken lines. Since all walls in $\mathscr{S}_\infty$ are of the form $1+a_{\mathfrak{p}}z^{(m_{\mathfrak{p}},0)}$, i.e., with zero as second component of the exponent, the ending monomial of $\mathfrak{b}$ is $a_{\mathfrak{b}}z^{m_{\mathfrak{b}}}=a_{\mathfrak{b}}z^{(-pm_{\textup{out}},q)}$. Hence, the $t$-order of its ending monomial is $\varphi(pm_{\textup{out}})+q=p+q$. 
\end{proof}

\begin{cor}
\label{cor:broken}
If $P$ lies in an unbounded chamber of $\mathscr{S}_k$, then
\[ \mathfrak{B}_q^{(k)}(P) = \coprod_{p=1}^{k-q} \mathfrak{B}_{p,q}(P). \]
\end{cor}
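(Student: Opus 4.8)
The plan is to prove the two inclusions separately and then check that the union on the right is disjoint. I would first record that every broken line on either side has endpoint $P$ and, as a member of $\mathfrak{B}_q(P)$, asymptotic monomial $m_1=(qm_{\textup{out}},q)$; the whole content of the statement therefore concerns the ending direction $\widebar{m}_{\mathfrak{b}}$ and the $t$-order of the ending monomial. A key preliminary observation is that the second coordinate of the exponent is preserved under wall crossing: since every wall of $\mathscr{S}_\infty$ carries a function of the form $1+a_{\mathfrak{p}}z^{(m_{\mathfrak{p}},0)}$ with vanishing $h$-component, each transport $\theta_{\mathfrak{u}_i\mathfrak{u}_{i+1}}$ adds only integer multiples of $(m_{\mathfrak{p}},0)$, so every monomial of $\mathfrak{b}$ has $h$-component equal to $q$. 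In particular $m_{\mathfrak{b}}=(\widebar{m}_{\mathfrak{b}},q)$.

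For the inclusion $\mathfrak{B}_q^{(k)}(P)\subseteq\coprod_{p=1}^{k-q}\mathfrak{B}_{p,q}(P)$, let $\mathfrak{b}\in\mathfrak{B}_q^{(k)}(P)$. Since $P$ lies in an unbounded chamber of $\mathscr{S}_k$, Proposition \ref{prop:parallel} gives that $\widebar{m}_{\mathfrak{b}}$ is parallel to $m_{\textup{out}}$, so by the previous paragraph $m_{\mathfrak{b}}=(cm_{\textup{out}},q)$ for some integer $c$; write $c=-p$. The final segment of $\mathfrak{b}$ is the one produced in the proof of Proposition \ref{prop:parallel}: it runs from the region near the bounded cell, where the last breaking at a non-parallel wall occurs, out to $P$ near infinity, hence travels in the $+m_{\textup{out}}$ direction. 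Since $\mathfrak{b}'=-\widebar{m}_{\mathfrak{b}}$ on this segment, this forces $\widebar{m}_{\mathfrak{b}}=-pm_{\textup{out}}$ with $p\geq 1$. Finally, as $\varphi(m_{\textup{out}})=1$ the $t$-order of $m_{\mathfrak{b}}$ equals $\varphi(pm_{\textup{out}})+q=p+q$, and the defining bound of $\mathfrak{B}_q^{(k)}(P)$ gives $p+q\leq k$, i.e.\ $p\leq k-q$. Thus $\mathfrak{b}\in\mathfrak{B}_{p,q}(P)$ with $1\leq p\leq k-q$.

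The reverse inclusion is then immediate from Proposition \ref{prop:p+q}: any $\mathfrak{b}\in\mathfrak{B}_{p,q}(P)$ with $1\leq p\leq k-q$ has asymptotic monomial $(qm_{\textup{out}},q)$, placing it in $\mathfrak{B}_q(P)$, and ending monomial of $t$-order $p+q\leq k$, so $\mathfrak{b}\in\mathfrak{B}_q^{(k)}(P)$. Disjointness of the union is clear, since the ending direction $\widebar{m}_{\mathfrak{b}}=-pm_{\textup{out}}$ determines $p$ uniquely, so no broken line can lie in two different $\mathfrak{B}_{p,q}(P)$.

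The step I expect to be the only real subtlety is pinning down the sign and positivity of $p$, i.e.\ showing the ending direction is $-pm_{\textup{out}}$ with $p\geq 1$ rather than a nonpositive multiple of $m_{\textup{out}}$. Parallelism alone (Proposition \ref{prop:parallel}) does not fix the orientation; one must additionally use that $\widebar{m}_{\mathfrak{b}}\neq 0$ (segments are non-constant affine maps) together with the geometry that the last segment connects the breaking locus near the bounded cell to the endpoint $P$ near infinity, so it necessarily points outward. Everything else reduces to bookkeeping with the preserved $h$-component and the normalization $\varphi(m_{\textup{out}})=1$.
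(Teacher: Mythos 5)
Your proof is correct and follows essentially the same route as the paper's: Proposition \ref{prop:parallel} gives the inclusion of $\mathfrak{B}_q^{(k)}(P)$ into the disjoint union, Proposition \ref{prop:p+q} (whose $h$-component preservation argument you re-derive in-line) gives the reverse inclusion together with the bound $p+q\leq k$, and disjointness is immediate. The only difference is one of detail: you make explicit the orientation argument forcing $\widebar{m}_{\mathfrak{b}}=-pm_{\textup{out}}$ with $p\geq 1$, which the paper leaves implicit in its citation of Proposition \ref{prop:parallel}.
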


\begin{proof}
By Proposition \ref{prop:parallel} we have $\mathfrak{B}_q^{(k)}(P) \subset \coprod_{p=1}^{\infty} \mathfrak{B}_{p,q}(P)$. By Proposition \ref{prop:p+q} we have $\mathfrak{B}_{p,q}(P) \subset \mathfrak{B}_q^{(k)}(P)$ if and only if $p+q\leq k$. Clearly, for different $p$ the sets $\mathfrak{B}_{p,q}(P)$ are disjoint.
\end{proof}

\begin{cor}
The set $\mathfrak{B}_{p,q}(P)$ is finite for all $p,q\in\mathbb{N}$.
\end{cor}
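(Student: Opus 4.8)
The plan is to reduce the statement to two finiteness facts about $\mathscr{S}_k$: the wall structure of bounded order is finite, and the $t$-order gives a budget bounding the number of times a broken line can break. Fix $p,q\in\mathbb{N}$ and set $k=p+q$. By Proposition~\ref{prop:p+q} every broken line $\mathfrak{b}\in\mathfrak{B}_{p,q}(P)$ has ending monomial $a_{\mathfrak{b}}z^{(-pm_{\textup{out}},q)}$ of $t$-order $k$, so $\mathfrak{B}_{p,q}(P)\subseteq\mathfrak{B}_q^{(k)}(P)$ and hence, by the remark following the definition of $\mathfrak{B}_q^{(k)}(P)$, such broken lines break only at walls of $\mathscr{S}_k$. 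The point is that $\mathscr{S}_k$ is a finite wall structure: scattering to any fixed order $k$ produces only finitely many rays.

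First I would bound the number of breakings. Every wall function of $\mathscr{S}_k$ has the form $1+(\text{terms of strictly positive }t\text{-order})$, so whenever the transport $\theta_{\mathfrak{u}_i\mathfrak{u}_{i+1}}$ in condition~(2) of Definition~\ref{defi:broken} changes the attached monomial, the $t$-order strictly increases; since all $t$-orders lie in $\mathbb{Z}_{\geq 0}$, it increases by at least one. As the initial monomial has $t$-order $0$ (condition~(3)) and the ending monomial has $t$-order $k$, a broken line in $\mathfrak{B}_{p,q}(P)$ undergoes at most $k$ monomial-changing breakings; in particular its direction $\widebar{m}_i$ changes at most $k$ times, so its image is a piecewise-affine path with at most $k$ corners.

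Next I would reconstruct each broken line backwards from its fixed data. The endpoint $\mathfrak{b}(0)=P$ and the ending exponent $(-pm_{\textup{out}},q)$ are both fixed (Proposition~\ref{prop:p+q}), so the final straight segment is determined. Running backwards, at each direction-changing breaking I would record the wall $\mathfrak{p}\in\mathscr{S}_k$ at which it occurs together with the term of $f_{\mathfrak{p}}^{-\braket{n_{\mathfrak{p}},\widebar{m}}}$ picked up; this determines the preceding exponent $\widebar{m}_i=\widebar{m}_{i+1}-j\,\widebar{m}_{\mathfrak{p}}$ for some $j\geq 1$ and, by intersecting the incoming ray with $\mathfrak{p}$, the location of the corner. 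Because $\mathscr{S}_k$ has finitely many walls, each wall function contributes only finitely many terms of $t$-order $\leq k$, and there are at most $k$ breakings, there are only finitely many such backward sequences of breaking data, and each pins down at most one broken line. Hence $\mathfrak{B}_{p,q}(P)$ is finite.

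The hard part will be making the backward reconstruction rigorous: between two consecutive direction changes a straight segment may cross several walls of $\mathscr{S}_k$ trivially, so one must verify that such a segment meets each wall at most once and that the corner is uniquely located once the breaking wall and term are fixed, so that a given sequence of breaking data really determines the path. This is exactly the step where the finiteness and local polyhedral structure of $\mathscr{S}_k$ are needed; with those in hand the remaining bookkeeping is routine.
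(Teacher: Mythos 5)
Your reduction to bounded order is the same as the paper's: the inclusion $\mathfrak{B}_{p,q}(P)\subset\mathfrak{B}_q^{(k)}(P)$ for $k=p+q$ via Proposition \ref{prop:p+q} (the paper routes this through Corollary \ref{cor:broken}), after which the paper simply \emph{asserts} that $\mathfrak{B}_q^{(k)}(P)$ is finite because its elements only break at the finitely many walls of $\mathscr{S}_k$. The detailed bookkeeping you supply in place of that assertion, however, hinges on a claim that is false in this setting: that every wall function of $\mathscr{S}_k$ is $1$ plus terms of strictly positive $t$-order. This is true for walls in the interiors of maximal cells, but the initial walls of $\mathscr{S}_0$ lie \emph{along the edges of the bounded maximal cell} (they emanate from the affine singularities in the monodromy-invariant directions, which are the edge directions), and their functions $1+z^{(\widebar{m},0)}$ have $\widebar{m}$ tangent to that edge; since $\varphi$ is affine along an edge, the $t$-order $\varphi_x(-\widebar{m})$ of the nontrivial term is $0$ at every point of the edge. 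Broken lines genuinely break at these slabs (several of the breakings in Figure \ref{fig:sage} occur exactly on the boundary edges of the bounded cell), so your budget argument does not bound the number of breakings. Relatedly, ``each wall function contributes only finitely many terms of $t$-order $\leq k$'' is also unjustified at a slab: the transport is the power $f_{\mathfrak{p}}^{-\braket{n_{\mathfrak{p}},\widebar{m}}}$, and a \emph{negative} power of $1+z^{(\widebar{m}',0)}$ has infinitely many terms, all of $t$-order $0$; ruling this out requires the orientation convention, which forces a broken line (travelling in direction $-\widebar{m}_i$) to pick up a positive power.

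The missing idea that repairs the count is monotonicity of the $t$-order along a broken line, coming from the strict convexity of $\varphi$ together with the convention $\mathfrak{b}'(t)=-\widebar{m}_i$: the $t$-order of the attached monomial is not constant between breakings --- it jumps by a positive integer whenever the line transversally crosses a locus where $\varphi$ bends --- and it never decreases. In particular, a breaking at a slab does raise the $t$-order by at least $1$, but the increase comes from the bend of $\varphi$ at that edge, not from the slab function; breakings at interior walls raise it for the reason you give, since there the function terms do have positive order. With this input your bound of at most $k$ breakings, and then your backward reconstruction, go through (the issue you flag about trivial crossings is harmless: a trivial crossing leaves the monomial unchanged, and an affine segment meets each wall of $\mathscr{S}_k$ in at most one point, since segments of broken lines avoid $|\mathscr{S}|$ and hence cannot run along a wall). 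Alternatively, one can simply quote the finiteness of broken lines of bounded order for a wall structure of finite order (\cite{CPS}; \cite{GHS}), which is what the paper's one-line argument implicitly does.
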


\begin{proof}
By Corollary \ref{cor:broken} we have $\mathfrak{B}_{p,q}(P) \subset \mathfrak{B}_q^{(k)}(P)$ for $k\geq p+q$. The set $\mathfrak{B}_q^{(k)}(P)$ is finite for all $k$, since broken lines in $\mathfrak{B}^{(k)}_q(P)$ only break at walls of $\mathscr{S}_k$ and $\mathscr{S}_k$ contains only finitely many walls. So $\mathfrak{B}_{p,q}(P)$ is finite as well.
\end{proof}

\section{Tropical disks and curves}								

We use the definition of tropical curves from \cite{Gra}, Definition 3.3. Tropical curves may have bounded legs ending in affine singularities. They do not have uni- or bivalent vertices and fulfill the ordinary balancing condition $\sum_{E\ni V} u_{(V,E)}=0$ at vertices $V$, if not specified otherwise. Here $u_{(V,E)}\in\iota_\star\Lambda_{h(V)}$ is the weight vector of the flag $(V,E)$, as defined in \cite{Gra}, Definition 3.3.

\begin{defi}
A \textit{tropical disk} $h^\circ : \Gamma \rightarrow B$ is a tropical curve with a unique univalent vertex $V_\infty$, such that $h^\circ$ is balanced for all vertices $V \neq V_\infty$. 

Write $\mathfrak{H}_q^\circ(P)$ for the set of all rational tropical disks on $B$ with one unbounded leg of weight $q$ and $h^\circ(V_\infty)=P$. Let $\mathfrak{H}_{p,q}^\circ(P)$ be the set of tropical disks in $\mathfrak{H}_q^\circ(P)$ with $u_{(V_\infty,E_\infty)}=-pm_{\textup{out}}$, where $E_\infty$ is the unique edge adjacent to $V_\infty$.
\end{defi}

\begin{prop}[\cite{CPS}, Lemma 6.4]
\label{prop:bhcirc}
There is a surjective map
\[ \mu : \mathfrak{H}^\circ_q(P) \rightarrow \mathfrak{B}_q(P) \]
such that for each $\mathfrak{b}\in\mathfrak{B}_q(P)$ the preimage $\mu^{-1}(\mathfrak{b})$ is finite. We say a tropical disk in $\mu^{-1}(\mathfrak{b})$ is obtained from $\mathfrak{b}$ by disk completion.
\end{prop}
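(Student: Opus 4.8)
The plan is to realize $\mu$ as the operation of extracting from a rational tropical disk its \emph{backbone}, and to realize disk completion as the inverse operation of grafting balanced subtrees onto a broken line at its breaking points. Since a disk in $\mathfrak{H}^\circ_q(P)$ has genus zero, its domain $\Gamma$ is a tree, so there is a unique embedded path in $\Gamma$ from the univalent vertex $V_\infty$ (which maps to $P$) to the unbounded leg of weight $q$; prolonging it along that leg gives the backbone. The remaining edges of $\Gamma$ form subtrees hanging off the interior vertices of the backbone, each balanced at every vertex, while $V_\infty$, being univalent, carries no hanging subtree. I would define the broken line $\mu(h^\circ)$ to be the image of the backbone traversed from infinity toward $P$, with its segments carrying weight vectors $\widebar m_i$ equal to the weight vectors of the backbone edges and coefficients $a_i$ accumulated from the grafted subtrees.

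To make this precise and to verify the three conditions of Definition \ref{defi:broken}, the conceptual input is the tropical interpretation of the scattering functions: in the consistent wall structure $\mathscr{S}_\infty$, each monomial of a wall function $f_{\mathfrak{p}}$ with $\widebar m$ a positive multiple of the wall direction $\widebar m_{\mathfrak{p}}$ equals the weighted count of rational tropical disks ending on $\mathfrak{p}$ whose univalent vertex has outgoing weight vector $\widebar m$, the weight being the product of the vertex multiplicities (this is the scattering--tropical disk correspondence underlying \cite{CPS}). Granting this, label the interior backbone vertices $V_1,\dots,V_{r-1}$ in the order met when travelling from infinity to $V_\infty$, with $h^\circ(V_i)$ lying on a wall $\mathfrak{p}_i$. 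The balancing condition $\sum_{E\ni V_i} u_{(V_i,E)}=0$ forces the change of the backbone direction across $V_i$ to be a positive multiple of $\widebar m_{\mathfrak{p}_i}$, exactly the shape of a breaking: after crossing $\mathfrak{p}_i$ one has $\widebar m_{i+1}=\widebar m_i + j\,\widebar m_{\mathfrak{p}_i}$, matching the exponent produced by $\theta_{\mathfrak{p}_i}(z^{(\widebar m_i,h)})=f_{\mathfrak{p}_i}^{-\langle n_{\mathfrak{p}_i},\widebar m_i\rangle}z^{(\widebar m_i,h)}$. Reading off coefficients, the product of the tropical multiplicities of the subtrees grafted at $V_i$ is, by the correspondence, precisely the coefficient of the corresponding term of $f_{\mathfrak{p}_i}^{-\langle n_{\mathfrak{p}_i},\widebar m_i\rangle}$, so condition~(2) holds with $a_{i+1}z^{m_{i+1}}$ a term of $\theta_{\mathfrak{u}_i\mathfrak{u}_{i+1}}(a_i z^{m_i})$. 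Condition~(1) holds because backbone edges map to non-constant line segments lying in chambers and meeting $|\mathscr{S}|$ only at the points $h^\circ(V_i)$; condition~(3) holds because the unbounded leg of weight $q$ forces $\widebar m_1=q\,m_{\textup{out}}$, the normalization $\varphi(m_{\textup{out}})=1$ forces the asymptotic monomial to have $t$-order zero, i.e. $m_1=(q\,m_{\textup{out}},q)$, and $a_1=1$ since no subtree is grafted on the unbounded segment.

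For surjectivity I would reverse this construction: given $\mathfrak{b}\in\mathfrak{B}_q(P)$, at each breaking point the outgoing monomial $a_{i+1}z^{m_{i+1}}$ is by definition a term of $f_{\mathfrak{p}_i}^{-\langle n_{\mathfrak{p}_i},\widebar m_i\rangle}$, hence a product of monomials of $f_{\mathfrak{p}_i}$; by the correspondence each such monomial is represented by a rational tropical disk ending on $\mathfrak{p}_i$, and grafting these disks at $h^\circ(V_i)$ produces a tropical disk $h^\circ$ with $\mu(h^\circ)=\mathfrak{b}$. Finiteness of $\mu^{-1}(\mathfrak{b})$ follows because $\mathfrak{b}$ has finitely many breakings, its ending monomial has a fixed finite $t$-order $k$, so only the finitely many walls of $\mathscr{S}_k$ are involved (cf. the note following the definition of $\mathfrak{B}^{(k)}_q(P)$), and each wall function has finitely many terms up to order $k$, each represented by finitely many tropical disks; the number of graftings is therefore finite.

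The main obstacle is the scattering--tropical disk correspondence invoked above: matching, at every order and at every wall, the coefficients of the wall functions $f_{\mathfrak{p}}$ with the weighted counts of tropical disks ending on the wall, including the correct lattice-index multiplicities at the vertices. Making the bookkeeping of these multiplicities compatible with the expansion of $\theta_{\mathfrak{p}}$, so that grafting realizes exactly the terms of the wall-crossing with nothing over- or under-counted, is the technical heart of the argument; once it is in place, the backbone/grafting dictionary and the finiteness bounds are essentially formal.
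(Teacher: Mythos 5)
Your underlying construction of $\mu$ --- extract the unique embedded path (backbone) from the unbounded leg to $V_\infty$, observe that the complementary subtrees are Maslov-index-zero disks sitting on the walls of $\mathscr{S}_\infty$, reverse the procedure by grafting for surjectivity, and get finiteness from the finitely many walls of $\mathscr{S}_k$ and the finitely many disks per term --- is the same route the paper takes. But there is a genuine error in your coefficient bookkeeping. You decorate $\mu(h^\circ)$ with coefficients ``accumulated from the grafted subtrees'' and claim that the product of the tropical multiplicities of the subtrees grafted at $V_i$ is \emph{precisely} the coefficient of the corresponding term of $f_{\mathfrak{p}_i}^{-\braket{n_{\mathfrak{p}_i},\widebar{m}_i}}$. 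This is false in general: all exponents in $f_{\mathfrak{p}_i}$ are multiples of $(m_{\mathfrak{p}_i},0)$, so the terms of the expansion of $\theta_{\mathfrak{u}_i\mathfrak{u}_{i+1}}(a_iz^{m_i})$ are indexed by the exponent jump alone, and the coefficient of such a term is a \emph{sum} over all grafting configurations producing that jump --- different partitions of the jump into disk weights, different disks realizing each weight, together with the $\exp/\log$ combinatorics relating $f_{\mathfrak{p}}$ to its disk count (Lemma \ref{lem:scattering}). A single disk contributes only one summand; indeed the whole point of the proposition is that $\mu$ is many-to-one (cf.\ Figure \ref{fig:mu}, where splitting a weight-$2$ leg into two weight-$1$ legs gives a different preimage of the \emph{same} broken line). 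Taken literally, your map assigns to the disks in one fiber different coefficient decorations, at most one of which can satisfy condition (2) of Definition \ref{defi:broken}, so $\mu$ as you define it does not land in $\mathfrak{B}_q(P)$.

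The repair is exactly what the paper does: define the coefficients of $\mu(h^\circ)$ by the breaking calculations themselves ($a_1=1$, and $a_{i+1}$ the full wall-crossing coefficient determined by the exponent jump), so that well-definedness is automatic and no scattering--disk correspondence is needed for this proposition at all; the identity between $a_{\mathfrak{b}}$ and $\sum_{h\in\mu^{-1}(\mathfrak{b})}\textup{Mult}(h)$ is a separate, later statement (Proposition \ref{prop:broken}), which is where Lemma \ref{lem:scattering} and the $\exp$ combinatorics genuinely enter. The same correction applies to your surjectivity argument: an outgoing monomial is not ``a product of monomials of $f_{\mathfrak{p}_i}$'' but a sum of such products; since its coefficient is nonzero, at least one configuration contributes nontrivially, and grafting one such configuration at each break point (and, as the paper notes, iterating over the ancestor walls involved in the scattering) produces a preimage. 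With these repairs your argument coincides with the paper's proof.
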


\begin{proof}
Let $h^\circ : \Gamma \rightarrow B$ be a tropical disk in $\mathfrak{H}^\circ_q(P)$. Since $\Gamma$ is a tree, there is a unique path from the unbounded edge to the vertex $V_\infty$. We show that this path defines a broken line $\mathfrak{b}=\mu(h)$ in $\mathfrak{B}_q(P)$. The complement of the path is a disjoint union of tropical disks without unbounded legs (Maslov index zero). Such tropical disks map onto the $1$-skeleton of $\mathscr{S}_\infty$ (see \cite{Gra}, Proposition 5.20, and the proof of \cite{CPS}, Lemma 6.4). The endpoint of these disks map onto walls of $\mathscr{S}_\infty$ and give the break points of $\mathfrak{b}$. The coefficients of the monomials attached to $\mathfrak{b}$ are determined by $a_1=1$ and the breaking calculations (Definition \ref{defi:broken}, (2)). 

To show that the map $\mu$ is surjective and has finite preimages, we describe the set of preimages of a broken line $\mathfrak{b}\in\mathfrak{B}_q(P)$. Whenever $\mathfrak{b}$ breaks at a wall $\mathfrak{p}$, add bounded edges/legs that map onto $\mathfrak{p}$ and stop at the break point. We know the exponent that $\mathfrak{b}$ picks up by breaking. It is a multiple of the exponent of $\mathfrak{p}$. The edges/legs we add must have total weight equal to this multiple in order to produce a balanced tropical disk. There are several possibilities, corresponding to partitions of this multiple. Repeat this procedure for all ancestors of $\mathfrak{p}$. By ancestors we mean all walls in $\mathscr{S}_\infty$ that are involved in the scattering to produce $\mathfrak{p}$. In each step we have several possibilities, corresponding to partitions of the nontrivial monomials involved. All (finitely many) choices of such partitions give a preimage of $\mathfrak{b}$ and all preimages are given this way.
\end{proof}

Let $P$ be a general point in an unbounded chamber of $\mathscr{S}_{p+q}$. Since $\mu$ maps broken lines in $\mathfrak{B}_{p,q}(P)$ to tropical disks in $\mathfrak{H}_{p,q}^\circ(P)$, we have the following.

\begin{cor}
\label{cor:bhcirc}
There is a surjective map with finite preimages
\[ \mu : \mathfrak{H}_{p,q}^\circ(P) \rightarrow \mathfrak{B}_{p,q}(P). \]
\end{cor}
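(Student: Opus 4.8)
The plan is to deduce this corollary by restricting the surjective finite-fibre map $\mu : \mathfrak{H}_q^\circ(P) \to \mathfrak{B}_q(P)$ of Proposition \ref{prop:bhcirc} to the pieces indexed by the direction $p$. The essential point I would establish is that the disk completion $\mu$ is compatible with the grading by $p$, namely that the ending direction $\widebar{m}_{\mathfrak{b}}$ of the broken line $\mu(h^\circ)$ coincides with the weight vector $u_{(V_\infty,E_\infty)}$ of the disk $h^\circ$ at its univalent vertex. Granting this, the inclusions $\mathfrak{H}_{p,q}^\circ(P)\subset\mathfrak{H}_q^\circ(P)$ and $\mathfrak{B}_{p,q}(P)\subset\mathfrak{B}_q(P)$ allow us to simply restrict $\mu$ and read off the conclusion.

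First I would unwind the construction of $\mu$ from the proof of Proposition \ref{prop:bhcirc}. Given a disk $h^\circ : \Gamma \to B$, the broken line $\mathfrak{b}=\mu(h^\circ)$ is the image of the unique path in the tree $\Gamma$ from the unbounded leg to $V_\infty$. The last portion of this path is exactly the edge $E_\infty$ adjacent to $V_\infty$, which is mapped by $\mathfrak{b}$ to its final line segment, the one carrying the ending monomial $a_{\mathfrak{b}}z^{m_{\mathfrak{b}}}$. Reconciling the broken-line convention $\mathfrak{b}'(t)=-m_i$ with the tropical weight-vector convention (the direction of travel from the penultimate vertex into $P=h^\circ(V_\infty)$ is $-u_{(V_\infty,E_\infty)}$), one finds $\widebar{m}_{\mathfrak{b}}=u_{(V_\infty,E_\infty)}$. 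Hence $h^\circ\in\mathfrak{H}_{p,q}^\circ(P)$, i.e. $u_{(V_\infty,E_\infty)}=-pm_{\textup{out}}$, if and only if $\widebar{m}_{\mathfrak{b}}=-pm_{\textup{out}}$, i.e. $\mathfrak{b}\in\mathfrak{B}_{p,q}(P)$. This shows $\mu\big(\mathfrak{H}_{p,q}^\circ(P)\big)\subset\mathfrak{B}_{p,q}(P)$, so the restriction is well defined.

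It then remains to check surjectivity and finiteness of the restricted fibres. Let $\mathfrak{b}\in\mathfrak{B}_{p,q}(P)$. By Proposition \ref{prop:p+q} its asymptotic monomial is $m_1=(qm_{\textup{out}},q)$, so $\mathfrak{b}\in\mathfrak{B}_q(P)$, and Proposition \ref{prop:bhcirc} gives that the fibre $\mu^{-1}(\mathfrak{b})\subset\mathfrak{H}_q^\circ(P)$ is nonempty and finite. By the grading compatibility just established, every disk in $\mu^{-1}(\mathfrak{b})$ has $u_{(V_\infty,E_\infty)}=\widebar{m}_{\mathfrak{b}}=-pm_{\textup{out}}$ and therefore lies in $\mathfrak{H}_{p,q}^\circ(P)$. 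Thus the fibre of the restricted map over $\mathfrak{b}$ equals $\mu^{-1}(\mathfrak{b})$; it is nonempty, yielding surjectivity, and finite, as claimed.

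I expect the grading-compatibility step to be the only real obstacle: one must carefully identify the last edge of the path with the final segment of the broken line and then reconcile the two sign conventions, so as to be sure that the index $p$ is genuinely preserved rather than negated or shifted. Once this identification is in place, the statement is a purely formal restriction of an already-established surjection with finite fibres.
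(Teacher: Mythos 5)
Your proposal is correct and follows essentially the same route as the paper: the paper also obtains the corollary by observing that the disk-completion map $\mu$ of Proposition \ref{prop:bhcirc} is compatible with the grading by ending direction, so it restricts to a surjection $\mathfrak{H}_{p,q}^\circ(P)\rightarrow\mathfrak{B}_{p,q}(P)$ with finite fibers; the paper merely asserts this compatibility in one sentence, while you verify it. Your sign-convention check that $\widebar{m}_{\mathfrak{b}}=u_{(V_\infty,E_\infty)}$, and the observation that the fiber of the restricted map over $\mathfrak{b}\in\mathfrak{B}_{p,q}(P)$ equals the full fiber $\mu^{-1}(\mathfrak{b})$, are exactly the details the paper leaves implicit.
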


\begin{cor}
We have a decomposition
\[ \mathfrak{H}_q^\circ(P) = \coprod_{p=1}^\infty \mathfrak{H}_{p,q}^\circ(P). \]
In particular, for a tropical disk in $\mathfrak{H}_{p,q}^\circ(P)$ the edge $E_\infty$ is parallel to $m_{\textup{out}}$.
\end{cor}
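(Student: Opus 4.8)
The plan is to deduce the decomposition directly from the results already established for broken lines, transporting them across the correspondence $\mu$. The statement has two parts: the disjoint-union decomposition of $\mathfrak{H}_q^\circ(P)$, and the parallelism of $E_\infty$ for disks in each piece $\mathfrak{H}_{p,q}^\circ(P)$.

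First I would handle the decomposition. For any tropical disk $h^\circ\in\mathfrak{H}_q^\circ(P)$, the edge $E_\infty$ adjacent to the univalent vertex $V_\infty$ carries a weight vector $u_{(V_\infty,E_\infty)}$, which lies in $\iota_\star\Lambda_{h^\circ(V_\infty)}$. The key observation is that this weight vector is exactly the ending direction $\widebar{m}_{\mathfrak{b}}$ of the associated broken line $\mathfrak{b}=\mu(h^\circ)$: by the construction in Proposition \ref{prop:bhcirc} the path from the unbounded leg to $V_\infty$ is the broken line, and its final segment is the edge $E_\infty$. Applying Proposition \ref{prop:parallel}, which forces $\widebar{m}_{\mathfrak{b}}$ to be parallel to $m_{\textup{out}}$, together with Proposition \ref{prop:p+q}, which pins down $m_{\mathfrak{b}}=(-pm_{\textup{out}},q)$ for a unique $p\geq 1$, I conclude that $u_{(V_\infty,E_\infty)}=-pm_{\textup{out}}$ for a well-defined $p$. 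Assigning to each disk this integer $p$ gives a partition of $\mathfrak{H}_q^\circ(P)$, and by definition the fiber over $p$ is precisely $\mathfrak{H}_{p,q}^\circ(P)$. Since distinct values of $p$ give distinct weight vectors, the union is disjoint, and since $p$ ranges over all positive integers this yields $\mathfrak{H}_q^\circ(P)=\coprod_{p=1}^\infty\mathfrak{H}_{p,q}^\circ(P)$.

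The parallelism claim is then immediate: by the very definition of $\mathfrak{H}_{p,q}^\circ(P)$ we have $u_{(V_\infty,E_\infty)}=-pm_{\textup{out}}$, so $E_\infty$ is parallel to $m_{\textup{out}}$. I would phrase this as a direct restatement once the labelling above is in place.

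The one point requiring care — and the step I expect to be the main obstacle — is the identification of $u_{(V_\infty,E_\infty)}$ with the ending direction $\widebar{m}_{\mathfrak{b}}$ of $\mu(h^\circ)$. This is where I must invoke the hypothesis that $P$ lies in an unbounded chamber of $\mathscr{S}_{p+q}$ (stated just before Corollary \ref{cor:bhcirc}), so that Proposition \ref{prop:parallel} applies to the relevant broken lines. Strictly, Proposition \ref{prop:parallel} is about $\mathfrak{B}_q^{(k)}(P)$ for $P$ in an unbounded chamber of $\mathscr{S}_k$; I would note that every broken line arising as $\mu(h^\circ)$ for $h^\circ\in\mathfrak{H}_q^\circ(P)$ has finite $t$-order and hence lies in some $\mathfrak{B}_q^{(k)}(P)$, so the parallelism conclusion transfers. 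Once this identification is secure the rest is bookkeeping, so I would keep the argument short and lean entirely on Corollary \ref{cor:bhcirc} and the two propositions.
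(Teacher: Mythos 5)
Your proof is correct and takes essentially the same route as the paper: the paper's own (very terse) proof transports the broken-line decomposition of Corollary \ref{cor:broken} back to tropical disks via the construction of $\mu$, and your unwinding of that corollary into its ingredients, Propositions \ref{prop:parallel} and \ref{prop:p+q}, is exactly its content. The identification of $u_{(V_\infty,E_\infty)}$ with the ending direction $\widebar{m}_{\mathfrak{b}}$ of $\mu(h^\circ)$, which you rightly isolate as the key step, is precisely what the paper means by ``the construction of $\mu$ in Proposition \ref{prop:bhcirc}''.
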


\begin{proof}
This follows from Corollary \ref{cor:broken} and the construction of $\mu$ in Proposition \ref{prop:bhcirc} and Corollary \ref{cor:bhcirc}.
\end{proof}

\begin{defi}
Let $\mathfrak{H}_{p,q}(P)$ be the set of tropical curves on $B$ having two unbounded legs, of weight $p$ and $q$, and such that the image of the unbounded leg of weight $p$ contains $P$.
\end{defi}

\begin{prop}
\label{prop:hcirch}
There is a bijective map
\[ \mathfrak{H}^\circ_{p,q}(P) \rightarrow \mathfrak{H}_{p,q}(P), h^\circ \mapsto h \]
\end{prop}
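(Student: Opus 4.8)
The plan is to construct the bijection explicitly by a local graph surgery at the univalent vertex and to exhibit the inverse by the reverse surgery. The statement is essentially a truncation/completion correspondence: a tropical disk is a tropical curve in which one unbounded leg has been cut off at $P$ and the resulting endpoint recorded as the univalent vertex $V_\infty$.

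First I would define the forward map $h^\circ \mapsto h$. Given a tropical disk $h^\circ : \Gamma \to B$ in $\mathfrak{H}^\circ_{p,q}(P)$, consider its univalent vertex $V_\infty$, which maps to $P$, together with the unique adjacent edge $E_\infty$. By definition of $\mathfrak{H}^\circ_{p,q}(P)$ we have $u_{(V_\infty,E_\infty)}=-pm_{\textup{out}}$, so $E_\infty$ is a bounded edge of weight $p$ whose image is the segment from $h^\circ(V)$ to $P$ in direction $m_{\textup{out}}$, where $V$ denotes the vertex at the other end of $E_\infty$. To obtain $h$ I would delete $V_\infty$ and replace the bounded edge $E_\infty$ by an unbounded leg $L$ emanating from $V$ in direction $m_{\textup{out}}$ with weight $p$; its image is the ray from $h^\circ(V)$ through $P$ to infinity. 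I then check that $h$ is a genuine tropical curve in $\mathfrak{H}_{p,q}(P)$: the only vertex affected is $V$, and since $u_{(V,L)}=pm_{\textup{out}}=u_{(V,E_\infty)}$ the collection of weight vectors at $V$ is unchanged, so balancing is preserved there (and at every other vertex, $V_\infty$ having been removed). Because disks carry no univalent or bivalent vertices apart from $V_\infty$, the vertex $V$ is at least trivalent before and hence after the surgery, so $h$ has no forbidden low-valence vertices. Finally $h$ has exactly two unbounded legs, the original asymptotic leg of weight $q$ and the new leg $L$ of weight $p$, and $P$ lies on the image of $L$ by construction; thus $h\in\mathfrak{H}_{p,q}(P)$.

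For the inverse I would reverse the procedure. Given $h\in\mathfrak{H}_{p,q}(P)$, the point $P$ lies on the image of the weight-$p$ leg $L$, a ray from some vertex $V$ in direction $m_{\textup{out}}$. Since $P$ is a general point of an unbounded chamber of $\mathscr{S}_{p+q}$, it is not a vertex of $h$ and so lies in the interior of $L$ at positive distance from $V$. I insert a univalent vertex $V_\infty$ at $P$, cut $L$ there, discard the unbounded part beyond $P$, and retain the bounded edge $E_\infty$ from $V$ to $V_\infty$ of weight $p$, which now satisfies $u_{(V_\infty,E_\infty)}=-pm_{\textup{out}}$. This yields a tropical disk in $\mathfrak{H}^\circ_{p,q}(P)$. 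The two constructions are visibly mutually inverse, since reinserting and then deleting $V_\infty$ at $P$ (or vice versa) returns the original object.

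The only genuine content—and hence what I would flag as the main (if modest) obstacle—is the behaviour at the vertex $V$ at the foot of $E_\infty$ and $L$: one must confirm that the surgery never creates a bivalent vertex there and never disturbs balancing, which is exactly where the absence of bivalent vertices in disks (other than $V_\infty$) and the equality $u_{(V,E_\infty)}=u_{(V,L)}$ are used. A secondary point is that $P$ must be generic enough to lie in the interior of the leg rather than at a vertex; this is guaranteed by choosing $P$ general in an unbounded chamber of $\mathscr{S}_{p+q}$, which ensures the cut leaves a bounded edge of positive length so that the inverse is well defined. Everything else is routine bookkeeping of weights and directions.
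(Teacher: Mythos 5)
Your proposal is correct and is essentially the paper's own argument: the paper's proof consists precisely of observing that the ending edge $E_\infty$ of a disk in $\mathfrak{H}^\circ_{p,q}(P)$ is parallel to $m_{\textup{out}}$ and then completing $E_\infty$ to an unbounded leg $E_{\textup{out}}$, with the inverse (cutting the weight-$p$ leg at $P$) left implicit. Your write-up merely makes explicit the routine checks the paper omits (balancing and valence at $V$, genericity of $P$ ensuring the cut edge has positive length), so there is no substantive difference in approach.
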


\begin{proof}
By Proposition \ref{prop:parallel} and Corollary \ref{cor:bhcirc} tropical disks in $\mathfrak{H}^\circ_{p,q}(P)$ have ending edge $E_\infty$ parallel to $m_{\textup{out}}$. We obtain a tropical curve by completing $E_\infty$ to an unbounded leg $E_{\textup{out}}$.
\end{proof}

\begin{cor}
\label{cor:bh}
There is a surjective map with finite preimages
\[ \mu : \mathfrak{H}_{p,q}(P) \rightarrow \mathfrak{B}_{p,q}(P). \]
\end{cor}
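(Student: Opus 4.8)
The plan is to obtain $\mu$ on $\mathfrak{H}_{p,q}(P)$ by composing two maps that are already available in the excerpt, so that no new geometric input is required. By Proposition \ref{prop:hcirch} the disk-completion map $\mathfrak{H}^\circ_{p,q}(P) \to \mathfrak{H}_{p,q}(P)$, $h^\circ \mapsto h$, is a bijection, hence has a two-sided inverse $\iota : \mathfrak{H}_{p,q}(P) \to \mathfrak{H}^\circ_{p,q}(P)$; concretely $\iota$ truncates the unbounded leg $E_{\textup{out}}$ of weight $p$ back to the ending edge $E_\infty$. I would then \emph{define} the curve map as the composition
\[ \mu : \mathfrak{H}_{p,q}(P) \xrightarrow{\ \iota\ } \mathfrak{H}^\circ_{p,q}(P) \xrightarrow{\ \mu\ } \mathfrak{B}_{p,q}(P), \]
where the second arrow is the surjective, finite-to-one map of Corollary \ref{cor:bhcirc}.

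Granting this definition, both asserted properties follow formally. First, since $\iota$ is a bijection and the disk map $\mu : \mathfrak{H}^\circ_{p,q}(P) \to \mathfrak{B}_{p,q}(P)$ is surjective by Corollary \ref{cor:bhcirc}, the composition is surjective. Second, for any broken line $\mathfrak{b} \in \mathfrak{B}_{p,q}(P)$ the preimage under the composition is $\iota^{-1}\!\left(\mu^{-1}(\mathfrak{b})\right)$; as $\mu^{-1}(\mathfrak{b})$ is finite by Corollary \ref{cor:bhcirc} and $\iota^{-1}$ is injective, this set is finite. Thus $\mu : \mathfrak{H}_{p,q}(P) \to \mathfrak{B}_{p,q}(P)$ is surjective with finite preimages.

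The only point needing a word of care is that the symbol $\mu$ now denotes both the disk map and the curve map, so I would make explicit that the curve map is defined precisely so that the triangle with the bijection of Proposition \ref{prop:hcirch} commutes: for $h \in \mathfrak{H}_{p,q}(P)$ one recovers the disk $h^\circ = \iota(h)$ by truncating $E_{\textup{out}}$ to $E_\infty$, and then reads off the associated broken line exactly as in Proposition \ref{prop:bhcirc}. I do not expect any genuine obstacle here: all the real content -- surjectivity and the finite-to-one description of preimages via partitions of the scattering exponents -- was already carried out in Proposition \ref{prop:bhcirc} and Corollary \ref{cor:bhcirc}, and the present statement is a formal consequence of combining those with the bijection of Proposition \ref{prop:hcirch}.
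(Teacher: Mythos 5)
Your proposal is correct and follows essentially the same route as the paper: the paper's proof likewise defines the map as the composition of the inverse of the bijection from Proposition \ref{prop:hcirch} with the disk-completion map $\mu$ of Proposition \ref{prop:bhcirc} (in the restricted form of Corollary \ref{cor:bhcirc}). Your write-up merely spells out the formal transfer of surjectivity and finiteness of preimages through the bijection, which the paper leaves implicit.
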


\begin{proof}
The map is given by composing the inverse of the map from Proposition \ref{prop:hcirch} with the map $\mu$ from Proposition \ref{prop:bhcirc}.
\end{proof}

\begin{cor}
\label{cor:finite}
The set $\mathfrak{H}_{p,q}(P)$ is finite.
\end{cor}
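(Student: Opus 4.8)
The plan is to read off finiteness of $\mathfrak{H}_{p,q}(P)$ directly from the surjection established in Corollary \ref{cor:bh} together with the finiteness of its target. First I would recall the two ingredients that have already been assembled. On the one hand, the Corollary immediately following Proposition \ref{prop:p+q} asserts that $\mathfrak{B}_{p,q}(P)$ is a finite set; this is the genuinely substantive input, resting on the fact that every broken line in $\mathfrak{B}_q^{(k)}(P)$ breaks only at the finitely many walls of $\mathscr{S}_k$, with $k = p+q$. On the other hand, Corollary \ref{cor:bh} supplies a surjective map $\mu : \mathfrak{H}_{p,q}(P) \rightarrow \mathfrak{B}_{p,q}(P)$ whose fibres $\mu^{-1}(\mathfrak{b})$ are all finite, the finiteness of the fibres being the content of the disk-completion construction in Proposition \ref{prop:bhcirc} (where one counts the partitions arising at each scattering step).

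The key step is then the elementary set-theoretic observation that the domain is the disjoint union of the fibres over the points of the target,
\[ \mathfrak{H}_{p,q}(P) = \coprod_{\mathfrak{b}\in\mathfrak{B}_{p,q}(P)} \mu^{-1}(\mathfrak{b}). \]
Because $\mathfrak{B}_{p,q}(P)$ is finite this is a finite union, and because each $\mu^{-1}(\mathfrak{b})$ is finite the union is a finite union of finite sets; hence $\mathfrak{H}_{p,q}(P)$ is finite. Surjectivity of $\mu$ guarantees that no fibre is empty and, more importantly, that the union on the right really exhausts $\mathfrak{H}_{p,q}(P)$.

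There is essentially no obstacle to overcome here: all of the actual combinatorial labour has already been discharged in the proofs of Corollary \ref{cor:bh} and of the finiteness of $\mathfrak{B}_{p,q}(P)$, and what remains is a one-line deduction. The only point to keep track of is the standing genericity hypothesis: the map $\mu$ of Corollary \ref{cor:bh} was constructed for a general point $P$ in an unbounded chamber of $\mathscr{S}_{p+q}$, so I would state and apply the present corollary under that same assumption on $P$, inherited verbatim from Corollary \ref{cor:bh}.
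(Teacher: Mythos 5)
Your proposal is correct and is exactly the argument the paper intends: Corollary \ref{cor:finite} is stated without an explicit proof precisely because it follows immediately from Corollary \ref{cor:bh} (the surjection $\mu$ with finite fibres) combined with the finiteness of $\mathfrak{B}_{p,q}(P)$, via the decomposition of the domain into fibres that you write out. Your remark about carrying along the genericity hypothesis on $P$ from Corollary \ref{cor:bh} is also consistent with the paper's standing assumption that $P$ is a general point in an unbounded chamber of $\mathscr{S}_{p+q}$.
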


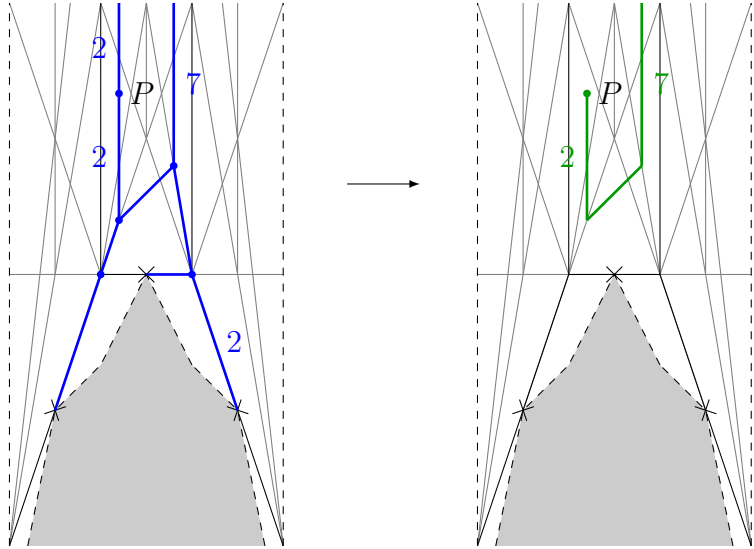
\begin{figure}[h!]
\centering
\begin{tikzpicture}[scale=1.2,rotate=90]
\draw[gray] (0,0.5) -- (0,2);
\draw[gray] (0,0.5) -- (0,-1);
\draw[gray] (-1.5,-0.5) -- (3,1);
\draw[gray] (-1.5,1.5) -- (3,0);
\draw[gray] (-3,-1) -- (3,0);
\draw[gray] (-3,2) -- (3,1);
\draw[gray] (1.5,0.5) -- (3,0.5);
\draw[gray] (0,-0.5) -- (3,-0.5);
\draw[gray] (0,1.5) -- (3,1.5);
\draw[gray] (0,0) -- (3,0.5);
\draw[gray] (0,0) -- (3,-1);
\draw[gray] (0,1) -- (3,0.5);
\draw[gray] (0,1) -- (3,2);
\draw[gray] (-3,-1) -- (3,-1/3);
\draw[gray] (-3,2) -- (3,1+1/3);
\coordinate (1) at (0,0);
\coordinate (2) at (0,1);
\coordinate (3) at (-3,-1);
\coordinate (4) at (-3,2);
\draw (3) -- (1) -- (2) -- (4);
\draw (1) -- (3,0);
\draw (2) -- (3,1);
\draw[dashed] (3) -- (3,-1);
\draw[dashed] (4) -- (3,2);
\draw[dashed] (-1,0) -- (0,0.5) -- (-1,1);
\draw[dashed] (-3,-0.8) -- (-1.5,-0.5) -- (-1,0);
\draw[dashed] (-3,1.8) -- (-1.5,1.5) -- (-1,1);
\fill[opacity=0.2] (-3,1.8) -- (-1.5,1.5) -- (-1,1) -- (0,0.5) -- (-1,0) -- (-1.5,-0.5) -- (-3,-0.8);
\coordinate[fill,cross,inner sep=3pt] (5) at (0,0.5);
\coordinate[fill,cross,inner sep=3pt,rotate=26.57] (6) at (-1.5,-0.5);
\coordinate[fill,cross,inner sep=3pt,rotate=-26.57] (7) at (-1.5,1.5);
\fill[color=blue] (0,0) circle (1.2pt);
\fill[color=blue] (0,1) circle (1.2pt);
\fill[color=blue] (0.6,0.8) circle (1.2pt);
\fill[color=blue] (2,0.8) circle (1.2pt);
\fill[color=blue] (1.2,0.2) circle (1.2pt);
\draw[blue,line width=1pt] (-1.5,-0.5) -- (0,0) node[midway,right]{$2$} -- (1.2,0.2) -- (0.6,0.8) -- (-1.5,1.5);
\draw[blue,line width=1pt] (0,0) -- (0,0.5);
\draw[blue,line width=1pt] (1.2,0.2) -- (3,0.2) node[midway,right]{$7$};
\draw[blue,line width=1pt] (0.6,0.8) -- (2,0.8) node[midway,left]{$2$};
\draw[blue,line width=1pt] (2,0.8) -- (3,0.8) node[midway,left]{$2$};
\draw (2,0.8) node[right]{$P$};
\draw[->] (1,-1.7) -- (1,-2.5);
\draw (1,-3);
\end{tikzpicture}
\begin{tikzpicture}[scale=1.2,rotate=90]
\draw[gray] (0,0.5) -- (0,2);
\draw[gray] (0,0.5) -- (0,-1);
\draw[gray] (-1.5,-0.5) -- (3,1);
\draw[gray] (-1.5,1.5) -- (3,0);
\draw[gray] (-3,-1) -- (3,0);
\draw[gray] (-3,2) -- (3,1);
\draw[gray] (1.5,0.5) -- (3,0.5);
\draw[gray] (0,-0.5) -- (3,-0.5);
\draw[gray] (0,1.5) -- (3,1.5);
\draw[gray] (0,0) -- (3,0.5);
\draw[gray] (0,0) -- (3,-1);
\draw[gray] (0,1) -- (3,0.5);
\draw[gray] (0,1) -- (3,2);
\draw[gray] (-3,-1) -- (3,-1/3);
\draw[gray] (-3,2) -- (3,1+1/3);
\coordinate (1) at (0,0);
\coordinate (2) at (0,1);
\coordinate (3) at (-3,-1);
\coordinate (4) at (-3,2);
\draw (3) -- (1) -- (2) -- (4);
\draw (1) -- (3,0);
\draw (2) -- (3,1);
\draw[dashed] (3) -- (3,-1);
\draw[dashed] (4) -- (3,2);
\draw[dashed] (-1,0) -- (0,0.5) -- (-1,1);
\draw[dashed] (-3,-0.8) -- (-1.5,-0.5) -- (-1,0);
\draw[dashed] (-3,1.8) -- (-1.5,1.5) -- (-1,1);
\fill[opacity=0.2] (-3,1.8) -- (-1.5,1.5) -- (-1,1) -- (0,0.5) -- (-1,0) -- (-1.5,-0.5) -- (-3,-0.8);
\coordinate[fill,cross,inner sep=3pt] (5) at (0,0.5);
\coordinate[fill,cross,inner sep=3pt,rotate=26.57] (6) at (-1.5,-0.5);
\coordinate[fill,cross,inner sep=3pt,rotate=-26.57] (7) at (-1.5,1.5);
\fill[color=black!40!green] (2,0.8) circle (1.2pt);
\draw[black!40!green,line width=1pt] (1.2,0.2) -- (3,0.2) node[midway,right]{$7$};
\draw[black!40!green,line width=1pt] (1.2,0.2) -- (0.6,0.8);
\draw[black!40!green,line width=1pt] (0.6,0.8) -- (2,0.8) node[midway,left]{$2$};
\draw (2,0.8) node[right]{$P$};
\end{tikzpicture}
\caption{An example of the map $\mu : \mathfrak{H}_{2,7}(P) \rightarrow \mathfrak{B}_{2,7}(P)$. Splitting the leg with weight $2$ on the left into two legs with weight $1$ each gives a tropical curve with the same image.}
\label{fig:mu}
\end{figure}

\begin{defi}
\label{defi:mult}
Let $h : \Gamma \rightarrow B$ be a tropical curve. Write $V(\Gamma)$ for the set of vertices of $\Gamma$ and $L_\Delta(\Gamma)$ for the set of bounded legs, which necessarily have to end in affine singularities of $B$. For a trivalent vertex $V\in V(\Gamma)$ define, with $u_{(V,E)}\in\iota_\star\Lambda_{h(V)}$ as in \cite{Gra}, Definition 3.3,
\[ m_V=\lvert u_{(V,E_1)}\wedge u_{(V,E_2)}\rvert=\lvert\text{det}(u_{(V,E_1)}|u_{(V,E_2)})\rvert, \]
where $E_1,E_2$ are any two edges adjacent to $V$. For a vertex $V\in V(\Gamma)$ of valency $\nu_V>3$ let $h_V$ be the one-vertex tropical curve describing $h$ locally at $V$ and let $h'_V$ be a deformation of $h_V$ to a trivalent tropical curve. 
This deformation has $\nu_V-2$ vertices. We define 
\[ m_V=\prod_{V'\in V(h'_V)}m_{V'} \]
and, by Proposition 2.7 in \cite{GPS}, this expression is independent of the deformation $h'_V$, hence well-defined. For a bounded leg $E\in L_\Delta(\Gamma)$ with weight $w_E$ define 
\[ m_E=\frac{(-1)^{w_E+1}}{w_E^2}. \]
We define the \textit{multiplicity} of $h$ to be
\[ m_h = \frac{1}{|\text{Aut}(h)|} \cdot \prod_V m_V \cdot \prod_{E\in L_\Delta(\Gamma)} m_E. \]
\end{defi}

The \textit{class} $\beta$ of a tropical curve $h$ is the class of the corresponding stable log map. For tropical curves not mapping to the bounded maximal cell $\sigma_0$ it can be read off from the tropical curve via projection to the unbounded direction or, equivalently, via deformation of the toric degeneration, as in \cite{Gra}, {\S}3.4. But tropical curves in $\mathfrak{H}_{p,q}$ may have image intersecting $\sigma_0$, so we need another way to read off $\beta$ from $h$.

In the dual intersection complex $B$ of the toric degeneration $\mathfrak{X}\rightarrow\mathbb{A}^1$ there are certain \textit{elementary tropical curves}. They have one unbounded leg with image given by an unbounded edge of $B$ and two bounded legs with weight $1$. Projection to the unbounded direction or deformation of the toric degeneration as in \cite{Gra}, {\S}3.4, easily shows that they correspond to the toric divisors of $X_0$. Now given any tropical curve $h : \Gamma \rightarrow B$ we can read of the class of $h$ from its intersection with the elementary tropical curves. If the intersection is not transversal we can use the moving lemma of tropical intersection theory and consider a small translation of one of the curves. For tropical curves $\tilde{h} : \tilde{\Gamma} \rightarrow \tilde{B}$ the class can be read off from any of its preimages under the map from Lemma \ref{lem:tilde}. This is well-defined, since different preimages correspond to different edge splittings without changing the total weight, and this does not affect the intersection multiplicity.

\begin{defi}
\label{defi:class}
Let $\mathfrak{H}_{p,q}^\beta(P)$ be the set of tropical curves in $\mathfrak{H}_{p,q}(P)$ of class $P$. Let $\mathfrak{B}_{p,q}^\beta(P)$ be the image of $\mathfrak{H}_{p,q}^\beta(P)$ under $\mu$.
\end{defi}

\begin{expl}
Consider the tropical curve from Figure \ref{fig:tilde}. It has nontrivial intersection with two elementary tropical curves, both of them corresponding to a line $L$ in $\mathbb{P}^2$. The intersection multiplicities are $1$ and $2$, respectively. Hence, the tropical curve corresponds to curve class $L+2L=3L$ on $\mathbb{P}^2$.
\end{expl}

\begin{figure}[h!]
\centering
\vspace{-3mm}
\begin{tikzpicture}[scale=0.8,rotate=90]
\fill[color=blue] (0,0) circle (1.2pt);
\fill[color=blue] (0,1) circle (1.2pt);
\fill[color=blue] (0.6,0.8) circle (1.2pt);
\fill[color=blue] (2,0.8) circle (1.2pt);
\fill[color=blue] (1.2,0.2) circle (1.2pt);
\draw[blue,line width=1pt] (-1.5,-0.5) -- (0,0) node[midway,right]{$2$} -- (1.2,0.2) -- (0.6,0.8) -- (-1.5,1.5);
\draw[blue,line width=1pt] (0,0) -- (0,0.5);
\draw[blue,line width=1pt] (1.2,0.2) -- (3,0.2) node[midway,right]{$7$};
\draw[blue,line width=1pt] (0.6,0.8) -- (2,0.8) node[midway,left]{$2$};
\draw[blue,line width=1pt] (2,0.8) -- (3,0.8) node[midway,left]{$2$};
\draw (2,0.8) node[right]{$P$};
\draw (-0.2,-0.2) -- (-0.2,0.3);
\draw (-0.2,-0.2) -- (-1.7,-0.7);
\draw (-0.2,-0.2) -- (2.8,-0.2) node[above]{$L$};
\draw (-0.2,1.2) -- (-0.2,0.7);
\draw (-0.2,1.2) -- (-1.7,1.7);
\draw (-0.2,1.2) -- (2.8,1.2) node[above]{$L$};
\draw[<-] (-0.2,-0.0667) to[bend right=10] (-0.7,0.1) node[below]{$2$};
\draw[<-] (-0.2,1.0667) to[bend left=10] (-0.7,0.9) node[below]{$1$};
\end{tikzpicture}
\caption{The tropical curve from Figure \ref{fig:tilde} has class $3L$.}
\label{fig:int}
\end{figure}
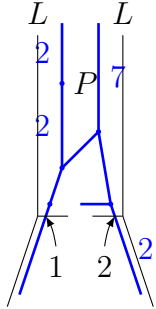

\section{Tropical correspondence}							

In \cite{Gra} the author established a correspondence between wall functions and $1$-marked log Gromov-Witten invariants. Here we prove a similar correspondence between broken lines and $2$-marked log Gromov-Witten invariants. The steps of the proof are the same, corresponding to the subsections of this section:
\begin{compactenum}
\item resolve the log singularities to obtain a log smooth degeneration; 
\item refine $\mathscr{P}$ by all walls and also by the broken line (that's new here) to obtain toric transversality;
\item apply the degeneration formula and show that for each tropical curve there is only one possible way of gluing; 
\item use the correspondence between walls and tropical disks without unbounded legs (Maslov index zero) to show that broken lines count tropical disks with one unbounded leg (Maslov index two).
\end{compactenum}
Steps (1)-(3) will give the following tropical correspondence theorem:
\[ N_{p,q} = p \cdot \sum_{h\in\mathfrak{H}_{p,q}(P)}\textup{Mult}(h). \]
Step (4) will show that coefficients of broken lines are counts of disk completions:
\[ a_{\mathfrak{b}} = \sum_{h\in\mu^{-1}(\mathfrak{b})} \textup{Mult}(h). \]
Together with Corollary \ref{cor:bh} this gives the following formula
\[ N_{p,q}=p\cdot\sum_{\mathfrak{b}\in\mathfrak{B}_{p,q}(P)}a_{\mathfrak{b}}. \]
Using the definition of theta functions via broken lines this will give a proof of Theorems \ref{thm:main} and \ref{thm:main2}, as we will explain in detail in {\S}\ref{S:theta}.

\subsection{Resolution of singularities}							
\label{S:resolution}

To apply the degeneration formula we need a log smooth family. Unfortunately, the log structure of our toric degeneration is not even coherent (there is no chart for the log structure at the points corresponding to the affine singularities). However, we can obtain a log smooth degeneration without changing the general fiber by successively blowing up $\mathfrak{X}$ along the irreducible components of the central fiber. Depending on the number of components there are several ways to do this. The easiest is to choose a cyclic ordering (with respect to the intersection combinatorics) and to blow up in this order along all but one irreducible component. Figure \ref{fig:blowup} shows the resulting central fiber (intersection complex) for $\mathbb{P}^2$. The resulting family $\tilde{\mathfrak{X}}\rightarrow\mathbb{A}^1$ has general fiber $X$ and is log smooth by \cite{GSdataII}, Lemma 2.12. In \cite{Gra}, {\S}2, the author used a more symmetric resolution. However, this resolution is not projective, so we don't use it here.

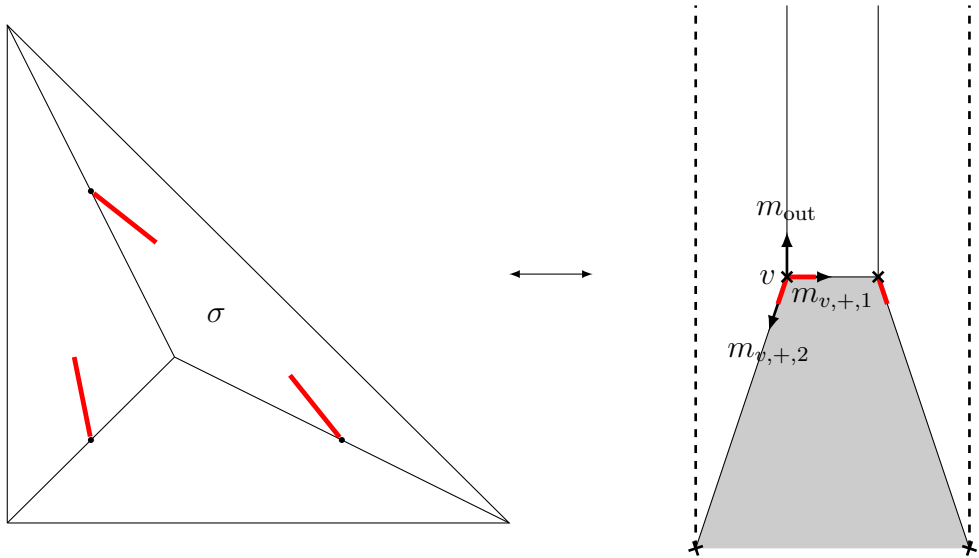
\begin{figure}[h!]
\centering
\begin{tikzpicture}[scale=2.2]
\coordinate (0) at (0,0);
\coordinate (1) at (-1,-1);
\coordinate (2) at (2,-1);
\coordinate (3) at (-1,2);
\coordinate[fill,circle,inner sep=0.8pt] (1a) at (-0.5,-0.5);
\coordinate[fill,circle,inner sep=0.8pt] (2a) at (1,-0.5);
\coordinate[fill,circle,inner sep=0.8pt] (3a) at (-0.5,1);
\draw (1) -- (2) -- (3) -- (1);
\draw (0) -- (1);
\draw (0) -- (2);
\draw (0) -- (3);
\draw[red,line width=1.8pt] (1a) -- (-0.6,0);
\draw[red,line width=1.8pt] (2a) -- (0.69,-0.11);
\draw[red,line width=1.8pt] (3a) -- (-0.11,0.69);
\draw (0.25,0.25) node{$\sigma$};
\draw[<->] (2,0.5) -- (2.5,0.5);
\draw (3,-1.2);
\end{tikzpicture}
\begin{tikzpicture}[scale=1.2,rotate=90]
\coordinate (1) at (0,0);
\coordinate[label=left:${v}$] (2) at (0,1);
\coordinate (3) at (-3,-1);
\coordinate (4) at (-3,2);
\fill[opacity=0.2] (-3,-1) -- (0,0) -- (0,1) -- (-3,2);
\draw (3) -- (1) -- (2) -- (4);
\draw (1) -- (3,0);
\draw (2) -- (3,1);
\draw[line width=1pt,dashed] (3) -- (3,-1);
\draw[line width=1pt,dashed] (4) -- (3,2);
\draw[line width=1pt,->] (0,1) -- (-0.6,1.2) node[below]{$m_{v,+,2}$};
\draw[line width=1pt,->] (0,1) -- (0,0.5) node[below]{$m_{v,+,1}$};
\draw[line width=1pt,->] (0,1) -- (0.5,1) node[above]{$m_{\textup{out}}$};
\draw[red,line width=1.8pt] (0,1) -- (-0.3,1.1);
\draw[red,line width=1.8pt] (0,1) -- (0,0.684);
\draw[red,line width=1.8pt] (0,0) -- (-0.3,-0.1);
\coordinate[fill,cross,line width=1pt,inner sep=2pt] (5) at (0,0);
\coordinate[fill,cross,line width=1pt,inner sep=2pt] (6) at (0,1);
\coordinate[fill,cross,line width=1pt,inner sep=2pt,rotate=26.57] (7) at (-3,-1);
\coordinate[fill,cross,line width=1pt,inner sep=2pt,rotate=-26.57] (8) at (-3,2);
\end{tikzpicture}
\caption{The intersection complex and its dual $(\tilde{B},\mathscr{P},\varphi)$ for a resolution of the toric degeneration of $(\mathbb{P}^2,E)$.}
\label{fig:blowup}
\end{figure}

For later convenience we indicate the choices of small resolutions by red stubs attached to the vertices of $\mathscr{P}$. The stubs at a vertex $v$ point in the directions corresponding to the toric divisors of $X_v$ intersecting an exceptional line. We denote the primitive vectors in the direction of the red stubs adjacent to $v$ by $m_{v,+,i}\in\Lambda_{\tilde{B},v}$ for $i=1,\ldots,n$, where $n\in\{0,1,2\}$ is the number of exceptional lines contained in $X_v$. Denote the primitive vectors in the direction of the other edges of $\sigma_0$ adjacent to $v$ by $m_{v,-,i}\in\Lambda_{\tilde{B},v}$ for $i=1,\ldots,n$, where $m=2-n$. Further, $m_{\textup{out}}$ is the unique unbounded direction of $\tilde{B}$.

\begin{expl}
Figure \ref{fig:blowup} shows the intersection complex and its dual for a resolution of the toric degeneration of $(\mathbb{P}^2,E)$. One component (corresponding to $\sigma$ and $v$, respectively) contains two exceptional lines.
\end{expl}

\begin{defi}
For an effective curve class $\beta$ of $X$ and $p,q\in\mathbb{N}$ with $p+q=D\cdot\beta$ define a class of stable log maps $\beta_{p,q}$ to $\tilde{\mathfrak{X}}\rightarrow\mathbb{A}^1$ as follows:
\begin{compactenum}[(1)]
\item genus $g=0$;
\item fibers have curve class $\beta$;
\item $2$ marked points $x_p,x_q$ having contact orders $p,q$ with $\mathfrak{D}$.
\end{compactenum}
\end{defi}

Let $\mathscr{M}(\tilde{\mathfrak{X}}/\mathbb{A}^1,\beta_{p,q})$ be the moduli space of stable log maps to $\tilde{\mathfrak{X}}\rightarrow\mathbb{A}^1$ of class $\beta_{p,q}$. By \cite{GSloggw} this is a proper Deligne-Mumford stack and admits a virtual fundamental class $\llbracket\mathscr{M}(\mathfrak{X}/\mathbb{A}^1,\beta_{p,q})\rrbracket$. It has virtual dimension $1$, since the contact orders cut down the virtual dimension by $(p-1)+(q-1)=D\cdot\beta-2$. Let $\textup{ev} : \mathscr{M}(\tilde{\mathfrak{X}}/\mathbb{A}^1,\beta_{p,q}) \rightarrow \mathfrak{D}$ be the evaluation map at $x_p$.

\begin{defi}
\label{defi:N}
Define the $2$-marked log Gromov-Witten invariant
\[ N_{p,q}(X,\beta) = \int_{\llbracket\mathscr{M}(\mathfrak{X}/\mathbb{A}^1,\beta_{p,q})\rrbracket} \textup{ev}^\star[\textup{pt}] \in \mathbb{Q}. \]
Since log Gromov-Witten invariants are constant in log smooth families (\cite{MR}, Appendix A), this agrees with the definition of $N_{p,q}(X,\beta)$ in the introduction.
\end{defi}

\begin{rem}
We believe that there should be a definition of log Gromov-Witten invariants for relatively coherent targets that captures all information about the resolution described here. This would simplify the subsequent proof a lot as we wouldn't have to translate between the toric degeneration and its resolution.
\end{rem}

\subsection{Tropical curves and refinement}						

It turns out that tropical curves on $\tilde{B}$ that are tropicalizations of stable log maps to $\tilde{X}$ do not fulfill the ordinary balancing condition, but the following modified one.

\begin{defi}
\label{defi:types}
For $p,q\in\mathbb{Z}_{>0}$ and $P$ a general point in a unbounded chamber of $\mathscr{S}_{p+q}$ let $\tilde{\mathfrak{H}}_{p,q}(P)$ be the set of tropical curves on $\tilde{B}$ with two unbounded legs, of weights $p$ and $q$, with vertex of the unbounded leg of weight $p$ being bivalent and mapping to $P$, and such that each vertex is of one of the following types:
\begin{compactenum}[(I)]
\item $V$ is not mapped to a vertex of $\mathscr{P}$. Then the ordinary balancing condition holds:
\vspace{-3mm}
\[ \sum_{E\ni V}u_{(V,E)} = 0. \]
The sum is over all edges or legs $E\in E(\Gamma_C)\cup L(\Gamma_C)$ containing $V$.
\item $V$ is mapped to a vertex $v$ of $\mathscr{P}$, and is $1$-valent with adjacent edge $E$ mapped onto the edge of $\mathscr{P}$ containing the red stub adjacent to $v$. Then the balancing condition reads, with $m_{v,+,i}$ as in Figure \ref{fig:blowup} for some $i$,
\[ u_{(V,E)} = km_{v,+,i}. \]
\item $V$ is mapped to a vertex $v$ of $\mathscr{P}$ and has exactly one adjacent edge or leg $E_{V,\textup{out}}$ that is not mapped onto a compact edge of $\mathscr{P}$. All other edges (possibly none) are compact with other vertex of type (II) above. In this case, for some $k_i\geq 0$, the following balancing condition holds:
\[ \sum_{E\ni V} u_{(V,E)} + \sum_{i=1}^{n(v)}k_im_{v,+,i} = 0. \]
\end{compactenum}
Write $V_{(I)}(\tilde{\Gamma})$, $V_{(II)}(\tilde{\Gamma})$, $V_{(III)}(\tilde{\Gamma})$ for the set of vertices of type (I), (II), (III).
\end{defi}

\begin{lem}
\label{lem:tilde}
There is a surjective map $\mathfrak{H}_{p,q}(P) \rightarrow \tilde{\mathfrak{H}}_{p,q}(P)$ by deleting bounded legs in the directions $m_{v,+,i}$ (the directions of the red stubs) and extending bounded legs in the directions $m_{v,-,i}$. See Figure \ref{fig:tilde} for an example and \cite{Gra}, Construction 3.17, for details of the construction.
\end{lem}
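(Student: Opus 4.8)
The plan is to reduce the statement to the purely local construction of \cite{Gra}, Construction~3.17, and to verify that it is compatible with the two-unbounded-leg setup, which is the only genuinely new feature. Starting from a curve $h : \Gamma \to B$ in $\mathfrak{H}_{p,q}(P)$, I observe that its two unbounded legs of weights $p$ and $q$, together with the marked point $P$ on the weight-$p$ leg, lie in the unbounded part of $B$, where the affine structures of $B$ and $\tilde{B}$ agree; these data pass to $\tilde{B}$ unchanged, the only adjustment being the bivalent vertex at $P$ required by the definition of $\tilde{\mathfrak{H}}_{p,q}(P)$. All nontrivial modification is confined to the bounded legs of $\Gamma$, which end in the affine singularities of $B$; after resolution these sit at the vertices $v$ of $\mathscr{P}$, where the adjacent singular directions split into the red-stub directions $m_{v,+,i}$ and the remaining edge directions $m_{v,-,i}$. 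Following Construction~3.17 I delete each bounded leg pointing in a direction $m_{v,+,i}$, recording its weight, and extend each bounded leg pointing in a direction $m_{v,-,i}$ up to the vertex $v$.

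The well-definedness step is to check that the resulting $\tilde{h} : \tilde{\Gamma} \to \tilde{B}$ has every vertex of type (I), (II) or (III). Vertices of $\Gamma$ away from the affine singularities are untouched and remain of type (I), while the vertices produced at the resolved singularities must be shown to satisfy the type (II) and (III) balancing conditions. The crux is the type (III) identity $\sum_{E \ni V} u_{(V,E)} + \sum_i k_i m_{v,+,i} = 0$: passing from a chart of $B$ to a chart of $\tilde{B}$ across a resolved singularity acts on the weight vectors $u_{(V,E)} \in \iota_\star\Lambda_{h(V)}$ by the local monodromy, and the resulting defect of ordinary balancing is absorbed precisely by the weights $k_i$ of the deleted $m_{v,+,i}$-legs. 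This monodromy bookkeeping is exactly the content of \cite{Gra}, Construction~3.17, so I would invoke it rather than reproduce it.

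For surjectivity I would run the construction backwards: given $\tilde{h} \in \tilde{\mathfrak{H}}_{p,q}(P)$, I reinstate at each type (III) vertex bounded legs in the directions $m_{v,+,i}$ of total weight $k_i$, undo the extensions of the $m_{v,-,i}$-legs so that they again terminate at the corresponding affine singularities, and re-read the curve in the affine structure of $B$; reversing the monodromy identity shows the result is ordinary-balanced, retains its two unbounded legs and the marked point $P$, hence lies in $\mathfrak{H}_{p,q}(P)$ and maps to $\tilde{h}$. Because the partition of a weight $k_i$ among several $m_{v,+,i}$-legs is lost under deletion, the map is surjective but not injective, which is all that is asserted. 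The main obstacle is the monodromy computation behind the type (III) balancing; since it is local at each affine singularity and identical to the $1$-marked situation, it is handled by the reference, and the only point requiring fresh verification is that the second unbounded leg and the point constraint sit in the unchanged unbounded region and therefore never interact with the local modification.
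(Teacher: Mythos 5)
The paper gives no independent proof of this lemma---the statement itself simply defers to \cite{Gra}, Construction 3.17, and Figure \ref{fig:tilde}---and your proposal rests on exactly the same reduction: the modification is purely local at the resolved singularities, the balancing bookkeeping is that of \cite{Gra}, and the two unbounded legs together with the point $P$ lie in the region left untouched. Your additional verifications (the type (I)--(III) check, and surjectivity by reinstating legs of total weight $k_i$ at type (III) vertices while truncating the edges ending in type (II) vertices back to the singularities) are correct and simply make explicit what the paper leaves to the citation.
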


\begin{cor}
The set $\tilde{\mathfrak{H}}_{p,q}(P)$ is finite.
\end{cor}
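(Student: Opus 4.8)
The plan is to deduce finiteness of $\tilde{\mathfrak{H}}_{p,q}(P)$ directly from the two immediately preceding results, without introducing any new construction. First I would invoke Corollary \ref{cor:finite}, which asserts that the set $\mathfrak{H}_{p,q}(P)$ of tropical curves on $B$ is finite. Then I would bring in the surjective map $\mathfrak{H}_{p,q}(P) \rightarrow \tilde{\mathfrak{H}}_{p,q}(P)$ provided by Lemma \ref{lem:tilde}, which is built by deleting the bounded legs in the red-stub directions $m_{v,+,i}$ and extending those in the directions $m_{v,-,i}$.

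The key observation is that the image of a finite set under any map is finite, and that surjectivity guarantees this image is all of $\tilde{\mathfrak{H}}_{p,q}(P)$; hence $\tilde{\mathfrak{H}}_{p,q}(P)$ is finite. I do not expect any genuine obstacle here. All the real content is already packaged in the finiteness of $\mathfrak{H}_{p,q}(P)$ (which itself rests on Corollary \ref{cor:broken} together with the finiteness of the wall structure $\mathscr{S}_k$) and in the existence of the surjection from Lemma \ref{lem:tilde}. The argument is therefore a one-line consequence of combining these two facts.
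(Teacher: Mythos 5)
Your proposal is exactly the paper's argument: the paper's proof reads ``This follows from Corollary \ref{cor:finite} and Lemma \ref{lem:tilde},'' i.e.\ finiteness of $\mathfrak{H}_{p,q}(P)$ combined with the surjection $\mathfrak{H}_{p,q}(P) \rightarrow \tilde{\mathfrak{H}}_{p,q}(P)$, which is precisely what you spell out. Your write-up is correct and simply makes explicit the one-line set-theoretic step (a surjective image of a finite set is finite) that the paper leaves implicit.
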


\begin{proof}
This follows from Corollary \ref{cor:finite} and Lemma \ref{lem:tilde}.
\end{proof}

\begin{figure}[h!]
\centering
\begin{tikzpicture}[scale=1.2,rotate=90]
\draw[gray] (0,0.5) -- (0,2);
\draw[gray] (0,0.5) -- (0,-1);
\draw[gray] (-1.5,-0.5) -- (3,1);
\draw[gray] (-1.5,1.5) -- (3,0);
\draw[gray] (-3,-1) -- (3,0);
\draw[gray] (-3,2) -- (3,1);
\draw[gray] (1.5,0.5) -- (3,0.5);
\draw[gray] (0,-0.5) -- (3,-0.5);
\draw[gray] (0,1.5) -- (3,1.5);
\draw[gray] (0,0) -- (3,0.5);
\draw[gray] (0,0) -- (3,-1);
\draw[gray] (0,1) -- (3,0.5);
\draw[gray] (0,1) -- (3,2);
\draw[gray] (-3,-1) -- (3,-1/3);
\draw[gray] (-3,2) -- (3,1+1/3);
\coordinate (1) at (0,0);
\coordinate (2) at (0,1);
\coordinate (3) at (-3,-1);
\coordinate (4) at (-3,2);
\draw (3) -- (1) -- (2) -- (4);
\draw (1) -- (3,0);
\draw (2) -- (3,1);
\draw[dashed] (3) -- (3,-1);
\draw[dashed] (4) -- (3,2);
\draw[dashed] (-1,0) -- (0,0.5) -- (-1,1);
\draw[dashed] (-3,-0.8) -- (-1.5,-0.5) -- (-1,0);
\draw[dashed] (-3,1.8) -- (-1.5,1.5) -- (-1,1);
\fill[opacity=0.2] (-3,1.8) -- (-1.5,1.5) -- (-1,1) -- (0,0.5) -- (-1,0) -- (-1.5,-0.5) -- (-3,-0.8);
\coordinate[fill,cross,inner sep=3pt] (5) at (0,0.5);
\coordinate[fill,cross,inner sep=3pt,rotate=26.57] (6) at (-1.5,-0.5);
\coordinate[fill,cross,inner sep=3pt,rotate=-26.57] (7) at (-1.5,1.5);
\fill[color=blue] (0,0) circle (1.2pt);
\fill[color=blue] (0,1) circle (1.2pt);
\fill[color=blue] (0.6,0.8) circle (1.2pt);
\fill[color=blue] (2,0.8) circle (1.2pt);
\fill[color=blue] (1.2,0.2) circle (1.2pt);
\draw[blue,line width=1pt] (-1.5,-0.5) -- (0,0) node[midway,right]{$2$} -- (1.2,0.2) -- (0.6,0.8) -- (-1.5,1.5);
\draw[blue,line width=1pt] (0,0) -- (0,0.5);
\draw[blue,line width=1pt] (1.2,0.2) -- (3,0.2) node[midway,right]{$7$};
\draw[blue,line width=1pt] (0.6,0.8) -- (2,0.8) node[midway,left]{$2$};
\draw[blue,line width=1pt] (2,0.8) -- (3,0.8) node[midway,left]{$2$};
\draw (2,0.8) node[right]{$P$};
\draw[->] (1,-1.7) -- (1,-2.5);
\draw (1,-3);
\end{tikzpicture}
\begin{tikzpicture}[scale=1.2,rotate=90]
\draw[gray] (0,0.5) -- (0,2);
\draw[gray] (0,0.5) -- (0,-1);
\draw[gray] (-1.5,-0.5) -- (3,1);
\draw[gray] (-1.5,1.5) -- (3,0);
\draw[gray] (-3,-1) -- (3,0);
\draw[gray] (-3,2) -- (3,1);
\draw[gray] (1.5,0.5) -- (3,0.5);
\draw[gray] (0,-0.5) -- (3,-0.5);
\draw[gray] (0,1.5) -- (3,1.5);
\draw[gray] (0,0) -- (3,0.5);
\draw[gray] (0,0) -- (3,-1);
\draw[gray] (0,1) -- (3,0.5);
\draw[gray] (0,1) -- (3,2);
\draw[gray] (-3,-1) -- (3,-1/3);
\draw[gray] (-3,2) -- (3,1+1/3);
\coordinate (1) at (0,0);
\coordinate (2) at (0,1);
\coordinate (3) at (-3,-1);
\coordinate (4) at (-3,2);
\fill[opacity=0.2] (-3,-1) -- (0,0) -- (0,1) -- (-3,2);
\draw (3) -- (1) -- (2) -- (4);
\draw (1) -- (3,0);
\draw (2) -- (3,1);
\draw[dashed] (3) -- (3,-1);
\draw[dashed] (4) -- (3,2);
\draw[red,line width=1.8pt] (0,1) -- (-0.3,1.1);
\draw[red,line width=1.8pt] (0,1) -- (0,0.684);
\draw[red,line width=1.8pt] (0,0) -- (-0.3,-0.1);
\draw[blue,line width=1pt] (0,1.03) node[fill,circle,inner sep=1.2pt,label=left:(III)]{} -- (0.6,0.8) node[fill,circle,inner sep=1.2pt,label=left:(I)]{} -- (1.2,0.2) node[fill,circle,inner sep=1.2pt,label=right:(I)]{} -- (0,-0) node[fill,circle,inner sep=1.2pt,label=right:(III)]{};
\draw[blue,line width=1pt] (1.2,0.2) -- (3,0.2) node[midway,right]{$7$};
\draw[blue,line width=1pt] (0.6,0.8) -- (2,0.8) node[midway,left]{$2$} -- (3,0.8) node[midway,left]{$2$};
\draw[blue,line width=1pt] (2,0.8) node[left]{(I)};
\fill[blue] (2,0.8) circle (1.2pt);
\draw[blue,line width=1pt] (0,0) -- (0,0.97) node[fill,circle,inner sep=1.2pt,label=below:(II)]{};
\draw (2,0.8) node[right]{$P$};
\coordinate[fill,cross,inner sep=3pt] (5) at (0,0);
\coordinate[fill,cross,inner sep=3pt] (6) at (0,1);
\coordinate[fill,cross,inner sep=3pt,rotate=26.57] (7) at (-3,-1);
\coordinate[fill,cross,inner sep=3pt,rotate=-26.57] (8) at (-3,2);
\end{tikzpicture}
\caption{An example of the map $\mathfrak{H}_{2,7} \rightarrow \tilde{\mathfrak{H}}_{2,7}$. The weights of edges and the types (I)-(III) of vertices of the target are given. Two vertices are mapped to the same vertex, but not connected.}
\label{fig:tilde}
\end{figure}
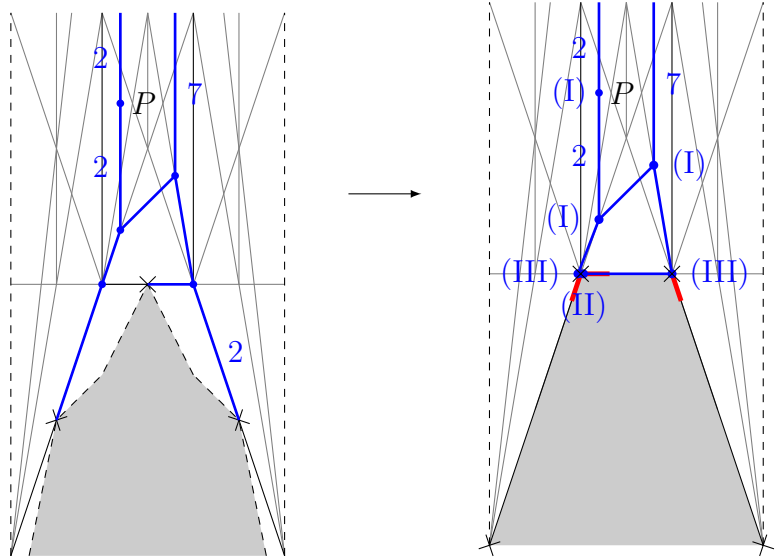

\begin{defi}
For $p,q\in\mathbb{Z}_{>0}$, a general point $P$ in an unbounded chamber of $\mathscr{S}_{p+q}$ and an effective curve class $\beta$ on $X$, let $\tilde{\mathfrak{H}}_{p,q}^\beta(P)$ be the moduli space of tropical curves in $\tilde{\mathfrak{H}}_{p,q}(P)$ whose image under the map from Lemma \ref{lem:tilde} has class $\beta$ (see Definition \ref{defi:class}. Note that
\[ \tilde{\mathfrak{H}}_{p,q}(P) = \coprod_\beta \tilde{\mathfrak{H}}_{p,q}^\beta(P), \]
where the sum is over all effective curve classes on $X$ with $D\cdot\beta=p+q$.
\end{defi}

\begin{prop}
\label{prop:balancing}
$\tilde{\mathfrak{H}}_{p,q}^\beta(P)$ is the set of tropical curves that arise as tropicalizations of stable log maps in $\mathscr{M}(\tilde{X},\beta_{p,q})$. Vertices of type (II) correspond to components that are multiple covers of an exceptional $\mathbb{P}^1$, and vertices of type (III) to components intersecting an exceptional $\mathbb{P}^1$.
\end{prop}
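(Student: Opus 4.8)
The plan is to follow the tropicalization argument of \cite{Gra}, {\S}3, adapting it to the projective resolution $\tilde{\mathfrak{X}}$ and its exceptional lines. First I would recall the tropicalization construction: to a stable log map $f\in\mathscr{M}(\tilde{X},\beta_{p,q})$ one associates a tropical curve $\textup{Trop}(f):\Gamma\to\tilde{B}$ whose vertices are the irreducible components of the central fiber domain, whose edges are the nodes, and whose legs are the marked points, the map recording for each component the stratum of $\tilde{\mathfrak{X}}_0$ into which it maps and the weight vectors $u_{(V,E)}$ read off from the contact orders at the nodes and marked points. Genus $0$ forces $\Gamma$ to be a tree, the two marked points of contact orders $p$ and $q$ produce the two unbounded legs of weights $p$ and $q$, and stability together with genericity of $P$ rules out superfluous uni- or bivalent vertices, so that $\textup{Trop}(f)$ lands in $\tilde{\mathfrak{H}}_{p,q}(P)$.

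The core of the proof is to verify the three balancing conditions vertex by vertex from the local geometry of $\tilde{\mathfrak{X}}_0$. For a component $C_V$ whose image meets none of the exceptional $\mathbb{P}^1$'s, the target is locally the toric variety attached to the relevant stratum, and the classical toric balancing condition---the vanishing of the total intersection with the toric boundary---gives $\sum_{E\ni V}u_{(V,E)}=0$; these are the vertices of type (I), together with the type (III) vertices carrying no exceptional correction. A component $C_V$ that is a degree-$k$ multiple cover of an exceptional line in $X_v$ tropicalizes to a $1$-valent vertex at $v$ whose unique weight vector is $km_{v,+,i}$, the direction of the corresponding red stub, giving type (II); here the one-sidedness of the exceptional curve (it meets only the adjacent strata) is what produces the nonstandard balancing. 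Finally, if $C_V\subset X_v$ meets the $i$-th exceptional line with multiplicity $k_i$, the balancing computed inside the resolved surface $X_v$ reads as ordinary balancing corrected by the contributions $\sum_i k_im_{v,+,i}$ of the exceptional directions, which is precisely condition (III). The identification of type (II) vertices with multiple covers, and of type (III) vertices with components meeting exceptional lines, is read off directly from these local models.

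It remains to match the global discrete data and to prove the converse. The curve class $\beta$ agrees on both sides by Definition \ref{defi:class}, since the intersection numbers with the elementary tropical curves compute the intersection of $C$ with the toric divisors, hence $\beta$; the point condition is matched because the evaluation map at $x_p$ lands in the stratum dual to the leg of weight $p$, so fixing a point forces that leg to pass through $P$. For the converse---that every tropical curve in $\tilde{\mathfrak{H}}_{p,q}^\beta(P)$ arises as a tropicalization---I would invoke the log smooth lifting and realization results underlying \cite{Gra}, in the spirit of \cite{GSloggw}: the modified balancing conditions are exactly the integrality constraints guaranteeing that the local models at each vertex (toric curves at type (I)/(III), multiple covers of $\mathbb{P}^1$ at type (II)) exist and glue into a log map whose tropicalization is the given curve.

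The hard part will be the precise derivation of the type (II) and (III) balancing conditions for this specific projective resolution, where the exceptional lines differ from those of the symmetric resolution used in \cite{Gra}, and in particular matching the correction term $\sum_i k_im_{v,+,i}$ with the actual intersection multiplicities with the exceptional lines. A secondary difficulty is ensuring that tropicalization and realization set up a genuine bijection of sets, i.e.\ that the log structure at the non-toric central fiber introduces neither spurious maps nor obstructions; here genus $0$ and the generic choice of $P$ keep all local models unobstructed.
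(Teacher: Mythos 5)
Your proposal is correct in substance and follows the same underlying route as the paper, but the packaging is very different, and the comparison is instructive. The paper's entire proof is two sentences: it cites \cite{Gra}, Proposition 3.12, for the statement that tropicalizations of stable log maps to $\tilde{\mathfrak{X}}$ have vertices of types (I)--(III), and then observes that having two marked points of contact orders $p$ and $q$ is equivalent to the tropical curve having two unbounded legs of weights $p$ and $q$. Everything in your second paragraph --- the degree-zero/toric-boundary argument at type (I) vertices, multiple covers of exceptional lines at type (II) vertices, and the correction term $\sum_i k_i m_{v,+,i}$ from intersections with exceptional lines at type (III) vertices --- is a reconstruction of the proof of that cited proposition rather than a different argument. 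What your version buys is self-containedness, and it explicitly confronts two points the paper leaves to the citation: first, that the resolution used here (cyclic and projective) differs from the symmetric one of \cite{Gra}, {\S}2, so one must check that the local analysis at vertices carrying red stubs carries over (it does, since the local models at the exceptional lines are the same; only their global distribution over the vertices changes, which is why the citation is legitimate); second, the converse inclusion, i.e.\ that every curve in $\tilde{\mathfrak{H}}_{p,q}^\beta(P)$ is realized as a tropicalization --- the paper's quoted proof covers only the forward direction, so the set equality claimed in the proposition really does need your realization argument, or else the remark that unrealized tropical curves contribute empty moduli spaces $\mathscr{M}_{\tilde{h}}$ and hence vanish in the decomposition formula, which is all that is used downstream. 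One caution: your claim that stability and genericity of $P$ rule out uni- and bivalent vertices is too strong as stated, since type (II) vertices are univalent and Definition \ref{defi:types} explicitly requires the vertex of the weight-$p$ leg to be bivalent and to map to $P$; what you should say is that all remaining vertices can be taken at least trivalent, with the bivalent vertex at $P$ being the tropical incarnation of the point constraint $\textup{ev}^\star[\textup{pt}]$.
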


\begin{proof}
It is shown in \cite{Gra}, Proposition 3.12, that tropicalizations of stable log maps to $\tilde{\mathfrak{X}}$ have vertices of types (I)-(III) above. The condition to have two marked points with contact orders $p$ and $q$ is equivalent to the condition on the tropical curve to have two unbounded legs of weight $p$ and $q$, respectively.
\end{proof}

\begin{rem}[Tropical cycles]
More formally, the ``elementary tropical curves'' should be seen as tropical cocycles in $H^1(\overline{B},\iota_\star\check{\Lambda}_N)$ and the ``intersection'' is given by an extension of the pairing $H_1(B,\iota_\star\Lambda)\otimes H^1(B,\iota_\star\check{\Lambda})\rightarrow\mathbb{Z}^2$ from \cite{Rud}. Here $\overline{B}$ is a compactification of $B$ and $\check{\Lambda}_N$ is an extension of $\Lambda$ such that the stalk of $\Lambda_N$ at a point in $\overline{B}\setminus B$ is generated by the $m_{\text{out}}$. It follows from \cite{RZ} there is a natural isomorphism $H^1(\overline{B},\iota_\star\check{\Lambda}_N)\cong\text{Pic}(X)$. This gives the toric divisor (line bundle) corresponding to the ``elementary tropical curve''. For a more detailed discussion see {\S}3 of \cite{GRZ}.
\end{rem}

\begin{con}[Refinement]
For $p,q\in\mathbb{Z}_{>0}$ let $P$ be a general point in an unbounded chamber of $\mathscr{S}_{p+q}$. Let $\mathscr{P}_{p,q}$ be a refinement of $\mathscr{P}$ such that all tropical curves in $\mathfrak{H}_{p,q}(P)$ are contained in the $1$-skeleton of $\mathscr{P}_{p,q}$ and $P$ is a vertex of $\mathscr{P}_{p,q}$. This induces a logarithmic modification $\tilde{\mathfrak{X}}_{p,q}\rightarrow\mathbb{A}^1$ of $\tilde{\mathfrak{X}}\rightarrow\mathbb{A}^1$ via subdivision of Artin fans, see \cite{AW}. By making a base change $t\mapsto t^e$ we can scale $\mathscr{P}_{p,q}$ and thus assume it has integral vertices. By construction, all stable log maps to the central fiber $Y$ of $\tilde{\mathfrak{X}}_{p,q}\rightarrow\mathbb{A}^1$ are torically transverse.
\end{con}

\subsection{The degeneration formula}							

Gromov-Witten invariants are invariant under logarithmic modifications \cite{AW}. Hence we can compute $N_{p,q}(X,\beta)$ on $Y$, the central fiber of the degeneration $\tilde{\mathfrak{X}}_{p,q}\rightarrow\mathbb{A}^1$ constructed above,
\[ N_{p,q}(X,\beta) = \int_{\llbracket\mathscr{M}(Y,\beta_{p,q})\rrbracket}\textup{ev}^\star[\textup{pt}]. \]
Here $\textup{ev}$ is the evaluation map at $x_p$, the marked point of order $p$. 

On the central fiber $Y$ we have some techniques for computing the invariants $N_{p,q}(X,\beta)$. By the decomposition formula (Proposition \ref{prop:dec}) the connected components of the moduli space $\mathscr{M}(Y,\beta_{p,q})$ are labelled by certain (decorated) tropical curves. The gluing formula (Proposition \ref{prop:gluing}) relates the contributions of every tropical curve to contributions of its vertices and edges. In our case, similar to \cite{Gra}, the situation is particularly easy. Tropical curves in $\tilde{\mathfrak{H}}_{p,q}(P)$ have a natural orientation of edges and the gluing according to this orientation is the only one giving a nonzero contribution (Proposition \ref{prop:unique}).

\begin{prop}[Decomposition formula]
\label{prop:dec}
For $\tilde{h}\in\tilde{\mathfrak{H}}_{p,q}^\beta(P)$ let $\mathscr{M}_{\tilde{h}}$ be the moduli space of stable log maps in $\mathscr{M}(Y,\beta_{p,q})$ with tropicalization $\tilde{h}$. Then
\[ \llbracket\mathscr{M}(Y,\beta_{p,q})\rrbracket = \sum_{\tilde{h}\in\tilde{\mathfrak{H}}_{p,q}^\beta(P)}\frac{l_{\tilde{\Gamma}}}{|\textup{Aut}(\tilde{h})|} F_\star\llbracket\mathscr{M}_{\tilde{h}}\rrbracket, \]
where $l_{\tilde{\Gamma}} := \textup{lcm}\{w_E \ | \ E\in E(\tilde{\Gamma})\}$ and $F:\mathscr{M}_{\tilde{h}}\rightarrow\mathscr{M}_\beta$ is the forgetful map.
\end{prop}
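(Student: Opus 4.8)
The plan is to obtain this as a specialization of the general decomposition formula for logarithmic Gromov-Witten invariants of Abramovich-Chen-Gross-Siebert. The starting point is that the moduli space $\mathscr{M}(Y,\beta_{p,q})$ is stratified by the combinatorial type (the tropicalization) of the stable log maps it parametrizes, and by Proposition \ref{prop:balancing} the tropical types that actually occur in class $\beta_{p,q}$ with $x_p$ mapping to $P$ are precisely the tropical curves $\tilde{h}\in\tilde{\mathfrak{H}}_{p,q}^\beta(P)$. Thus the first step is to invoke the ACGS decomposition theorem in the form
\[ \llbracket\mathscr{M}(Y,\beta_{p,q})\rrbracket = \sum_{\tilde{h}} \frac{m_{\tilde{h}}}{|\textup{Aut}(\tilde{h})|} F_\star\llbracket\mathscr{M}_{\tilde{h}}\rrbracket, \]
where the sum ranges over rigid tropical types and $m_{\tilde{h}}$ is the multiplicity furnished by that theorem, and then to identify $m_{\tilde{h}}$ with $l_{\tilde{\Gamma}}$.

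Second, I would check that every $\tilde{h}\in\tilde{\mathfrak{H}}_{p,q}^\beta(P)$ is rigid, i.e. defines a zero-dimensional stratum of the tropical moduli space, so that it contributes a genuine summand rather than a positive-dimensional family. This is where the point condition enters: by the refinement construction $P$ is a vertex of $\mathscr{P}_{p,q}$ and the unbounded leg of weight $p$ passes through it, so the position of that leg is pinned down; together with the fixed asymptotic directions, the prescribed leg weights and the balancing conditions of Definition \ref{defi:types}, this rigidifies the whole curve, since a genus-zero tropical curve of this shape has no remaining deformations once one bivalent vertex is fixed. Finiteness of the index set follows from Corollary \ref{cor:finite} together with Lemma \ref{lem:tilde}, so the sum is finite.

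Third, and this is the main obstacle, I would compute the multiplicity $m_{\tilde{h}}$ and show it equals $l_{\tilde{\Gamma}}=\textup{lcm}\{w_E \mid E\in E(\tilde{\Gamma})\}$. In the ACGS formula this multiplicity is the index of the lattice map comparing the universal tropical deformation of the type $\tilde{h}$ with the integral affine structure of $\tilde{B}$, equivalently the order of a cokernel measuring the failure of the edge lengths to lift integrally under the base change $t\mapsto t^e$. Because the central fiber $Y$ has been arranged so that all stable log maps are torically transverse, this index localizes to the edges of $\tilde{\Gamma}$ and reduces to the same lattice computation carried out in \cite{Gra}; the combinatorial outcome is exactly the least common multiple of the edge weights. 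I would also verify that the contact order conditions at the two marked points contribute no additional factor, since they are recorded in the leg weights rather than in the gluing lattice.

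Finally, substituting $m_{\tilde{h}}=l_{\tilde{\Gamma}}$ into the displayed identity and noting that by Proposition \ref{prop:balancing} the set $\tilde{\mathfrak{H}}_{p,q}^\beta(P)$ exhausts all rigid types contributing to class $\beta_{p,q}$, one obtains the stated formula; the automorphism factor $|\textup{Aut}(\tilde{h})|$ is inherited verbatim from the general theorem. The only genuinely new verification beyond citing ACGS is the identification of the multiplicity with $l_{\tilde{\Gamma}}$, which I expect to be the crux and which I would handle exactly as in the author's earlier one-marked treatment.
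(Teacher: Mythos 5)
Your proposal takes essentially the same route as the paper: both invoke the decomposition formula of \cite{ACGS1}, identify the index set with $\tilde{\mathfrak{H}}_{p,q}^\beta(P)$, and identify the ACGS multiplicity $m_{\tilde{h}}$ with $l_{\tilde{\Gamma}}=\textup{lcm}\{w_E \mid E\in E(\tilde{\Gamma})\}$. The differences are presentational: you add an explicit rigidity check and defer the multiplicity computation to the lattice argument of \cite{Gra}, whereas the paper instead notes that the vertex decorations by curve classes appearing in \cite{ACGS1} are determined by the tropical curve (as in \cite{Gra}, Proposition 4.1) and obtains the lcm directly from the integrality of the vertices of $\mathscr{P}_{p,q}$, since edge lengths become integral exactly after scaling by $\textup{lcm}\{w_E\}$.
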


\begin{proof}
This is a special case of the decomposition formula of \cite{ACGS1}, similar to \cite{Gra}, Proposition 4.4. In \cite{ACGS1} the sum is over tropical curves with vertices decorated by curve classes to the corresponding components. In our case, as in \cite{Gra}, Proposition 4.1, these curve classes are determined by the tropical curve. Hence, we can simply sum over $\tilde{\mathfrak{H}}_{p,q}(P)$. The nominator $l_{\tilde{\Gamma}}$ is the smallest integer such that scaling $\tilde{B}$ by $l_{\tilde{\Gamma}}$ leads to a tropical curve with integral vertices and edge lengths. By construction $\mathscr{P}_{p,q}$ has integral vertices and tropical curves in $\tilde{\mathfrak{H}}_{p,q}(P)$ are contained in the $1$-skeleton of $\mathscr{P}_{p,q}$ with vertices mapping to vertices of $\mathscr{P}_{p,q}$. The affine length of the image of an edge $E$ is a multiple of $w_E$. So the scaling necessary to obtain integral edge lengths is $l_{\tilde{\Gamma}} = \textup{lcm}\{w_E \ | \ E\in E(\tilde{\Gamma})\}$.
\end{proof}

Let $\tilde{h} : \tilde{\Gamma} \rightarrow \tilde{B}$ be a tropical curve in $\tilde{\mathfrak{H}}_{p,q}(P)$. For a vertex $V$ of $\tilde{\Gamma}$ define
\[ \mathscr{M}_V^\circ := \mathscr{M}(Y_{\tilde{h}(V)}^\circ,i_V^\star\beta_{p,q}), \]
where $Y_{\tilde{h}(V)}^\circ$ is the complement of the $0$-dimensional toric strata in $Y_{\tilde{h}(V)}$ and $i_V : Y_{\tilde{h}(V)}^\circ \rightarrow Y$ is the inclusion map.

For $V\in V_{II}(\tilde{\Gamma})$ (Definition \ref{defi:types}) with adjacent edge $E$, the moduli space $\mathscr{M}_V^\circ$ is proper, since it is isomorphic to the moduli space of $w_E$-fold multiple covers of $\mathbb{P}^1$ totally ramified at a point. For $V\in V(\tilde{\Gamma})\setminus V_{II}(\tilde{\Gamma})$ we obtain a proper moduli space as follows. Since tropical curves in $\tilde{\mathfrak{H}}_{p,q}(P)$ have genus $0$, the graph $\tilde{\Gamma}$ is a tree. We give it the structure of a rooted tree by choosing the vertex $V_{\textup{out}}$ of the unbounded leg of weight $q$ to be the root vertex. Then there is a natural orientation of the edges of $\tilde{\Gamma}$ by choosing edges to point from a vertex to its parent. For each vertex $V\in V(\tilde{\Gamma})\setminus V_{II}(\tilde{\Gamma})$ there is an evaluation map
\[ \textup{ev}_{V,-}^\circ : \mathscr{M}_V^\circ \rightarrow \prod_{E\rightarrow V}D_E^\circ, \]
where the product is over all edges of $\tilde{\Gamma}$ adjacent to $V$ and pointing towards $V$.

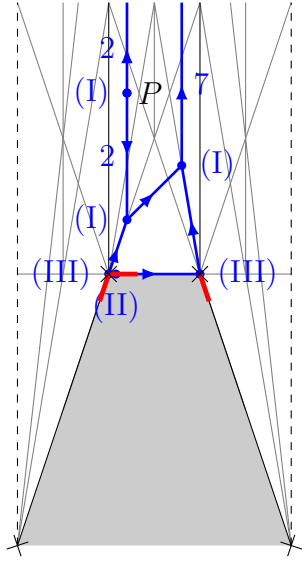
\begin{figure}[h!]
\centering
\begin{tikzpicture}[scale=1.2,rotate=90,decoration={markings,mark=at position 0.5 with {\arrow{>}}}]
\draw[gray] (0,0.5) -- (0,2);
\draw[gray] (0,0.5) -- (0,-1);
\draw[gray] (-1.5,-0.5) -- (3,1);
\draw[gray] (-1.5,1.5) -- (3,0);
\draw[gray] (-3,-1) -- (3,0);
\draw[gray] (-3,2) -- (3,1);
\draw[gray] (1.5,0.5) -- (3,0.5);
\draw[gray] (0,-0.5) -- (3,-0.5);
\draw[gray] (0,1.5) -- (3,1.5);
\draw[gray] (0,0) -- (3,0.5);
\draw[gray] (0,0) -- (3,-1);
\draw[gray] (0,1) -- (3,0.5);
\draw[gray] (0,1) -- (3,2);
\draw[gray] (-3,-1) -- (3,-1/3);
\draw[gray] (-3,2) -- (3,1+1/3);
\coordinate (1) at (0,0);
\coordinate (2) at (0,1);
\coordinate (3) at (-3,-1);
\coordinate (4) at (-3,2);
\fill[opacity=0.2] (-3,-1) -- (0,0) -- (0,1) -- (-3,2);
\draw (3) -- (1) -- (2) -- (4);
\draw (1) -- (3,0);
\draw (2) -- (3,1);
\draw[dashed] (3) -- (3,-1);
\draw[dashed] (4) -- (3,2);
\draw (2,0.8) node[right]{$P$};
\draw[blue,line width=1pt,postaction={decorate}] (2,0.8) node[fill,circle,inner sep=1.2pt,label=left:(I)]{} -- (3,0.8) node[midway,left]{$2$};
\draw[blue,line width=1pt,postaction={decorate}] (2,0.8) node[fill,circle,inner sep=1.2pt]{} -- (0.6,0.8) node[midway,left]{$2$};
\draw[blue,line width=1pt,postaction={decorate}] (0,1) node[fill,circle,inner sep=1.2pt,label=left:(III)]{} -- (0.6,0.8);
\draw[blue,line width=1pt,postaction={decorate}] (0.6,0.8) node[fill,circle,inner sep=1.2pt,label=left:(I)]{} -- (1.2,0.2);
\draw[blue,line width=1pt,postaction={decorate}] (0,0.92) node[fill,circle,inner sep=1.2pt,label=below:(II)]{} -- (0,0);
\draw[blue,line width=1pt,postaction={decorate}] (0,0) node[fill,circle,inner sep=1.2pt,label=right:(III)]{} -- (1.2,0.2);
\draw[blue,line width=1pt,postaction={decorate}] (1.2,0.2) node[fill,circle,inner sep=1.2pt,label=right:(I)]{} -- (3,0.2) node[midway,right]{$7$};
\draw[red,line width=1.8pt] (0,1) -- (-0.3,1.1);
\draw[red,line width=1.8pt] (0,1) -- (0,0.684);
\draw[red,line width=1.8pt] (0,0) -- (-0.3,-0.1);
\coordinate[fill,cross,inner sep=3pt] (5) at (0,0);
\coordinate[fill,cross,inner sep=3pt] (6) at (0,1);
\coordinate[fill,cross,inner sep=3pt,rotate=26.57] (7) at (-3,-1);
\coordinate[fill,cross,inner sep=3pt,rotate=-26.57] (8) at (-3,2);
\end{tikzpicture}
\caption{The orientation of the tropical curve from Figure \ref{fig:tilde}.}
\label{fig:orientation}
\end{figure}

\begin{lem}
\label{lem:proper}
The evaluation map $\textup{ev}_{V,-}^\circ$ is proper.
\end{lem}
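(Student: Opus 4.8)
The plan is to verify the valuative criterion of properness, but organised so that all of the extension and limit machinery is supplied by the already-known properness of logarithmic Gromov--Witten moduli, leaving only one combinatorial statement to check. First I would pass to a complete model: let $\bar Y$ be a complete toric surface containing $Y_{\tilde h(V)}$ as a torus-invariant open subset (one may take $\bar Y=Y_{\tilde h(V)}$ when $\tilde h(V)$ is an interior vertex, so that the component is already complete; otherwise one adds cones on the unbounded side, chosen so that the divisors $D_E$ for edges $E$ at $V$ and their mutual intersections are unchanged and so that the prescribed contact orders force every map to stay inside $Y_{\tilde h(V)}$). Let $\bar{\mathscr M}_V=\mathscr M(\bar Y,i_V^\star\beta_{p,q})$ be the moduli space of genus $0$ stable log maps of the given class with contact order $w_E$ along $D_E$ for \emph{every} edge or leg $E$ at $V$. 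By \cite{GSloggw} (see also \cite{ACGS1}) this is a proper Deligne--Mumford stack. Let $\overline{\textup{ev}}_{V,-}:\bar{\mathscr M}_V\to\prod_{E\to V}\bar D_E$ be the evaluation at the incoming marked points, with $\bar D_E\cong\mathbb P^1$ the closure of $D_E$. Since $\bar{\mathscr M}_V$ is proper and $\prod_{E\to V}\bar D_E$ is separated, $\overline{\textup{ev}}_{V,-}$ is proper.

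Now $\mathscr M_V^\circ$ is the open substack of $\bar{\mathscr M}_V$ cut out by the (open) condition that the image avoid the $0$-dimensional strata, and $\textup{ev}_{V,-}^\circ$ is the restriction of $\overline{\textup{ev}}_{V,-}$, landing in the open subset $\prod_{E\to V}D_E^\circ$. As the base change of the proper morphism $\overline{\textup{ev}}_{V,-}$ along the open immersion $\prod_{E\to V}D_E^\circ\hookrightarrow\prod_{E\to V}\bar D_E$ is again proper, it suffices to prove the equality of substacks
\[ \mathscr M_V^\circ=\overline{\textup{ev}}_{V,-}^{-1}\Big(\prod_{E\to V}D_E^\circ\Big). \]
Separatedness and finite type of $\textup{ev}_{V,-}^\circ$ are then inherited from $\bar{\mathscr M}_V$, and properness reduces to universal closedness, which this equality supplies. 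The inclusion ``$\subseteq$'' is immediate, since a map with image in $Y^\circ$ sends its marked points into the $D_E^\circ$; the content is the reverse inclusion, namely that a stable log map in $\bar{\mathscr M}_V$ whose incoming marked points avoid the $0$-dimensional strata has image entirely in $Y^\circ$.

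This reverse inclusion is the main obstacle, and I would establish it tropically, following the degeneration analysis of the $1$-marked case in \cite{Gra}. A component or special point landing in a $0$-dimensional stratum corresponds to a vertex, respectively a leg or bounded edge, of the tropicalization entering the interior of a two-dimensional cone of the fan of $\bar Y$. Using that the curve has genus $0$ (so the tropicalization is a tree), that each boundary divisor is met with the maximal contact order $w_E$ at a single marked point, and the balancing condition attached to the type of $V$ (Definition \ref{defi:types}), the external legs are forced to lie along the rays of the fan at positions pinned by the incoming evaluation; a balanced tree with these asymptotics and pinned legs cannot reach the interior of a two-dimensional cone. The hard part is precisely this balancing step: it is essential that the \emph{full} contact profile, along every boundary divisor and not merely the incoming ones, together with the genus-$0$ tree structure, rigidifies the tropicalization enough to exclude vertices in the deep cones, and that $P$ is general in its chamber of $\mathscr S_{p+q}$ so that the relevant intersections are transverse. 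Everything else --- the extension of families, separatedness and finite type --- is formal, being inherited from $\bar{\mathscr M}_V$ through the base-change argument above.
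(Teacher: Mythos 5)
Your formal skeleton is sound as far as it goes: compactify the target, invoke properness of the log Gromov--Witten moduli space of the compactification \cite{GSloggw}, and base-change the proper map $\overline{\textup{ev}}_{V,-}$ along the open immersion $\prod_{E\to V}D_E^\circ\hookrightarrow\prod_{E\to V}\bar D_E$; this does reduce the lemma to the set-theoretic identity $\mathscr M_V^\circ=\overline{\textup{ev}}_{V,-}^{-1}\bigl(\prod_{E\to V}D_E^\circ\bigr)$, and this is essentially how the sources cited in the paper's proof proceed. But there are two genuine gaps. First, your setup assumes that $Y_{\tilde h(V)}$ sits inside a complete \emph{toric} surface as a torus-invariant open subset. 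That is true for vertices of type (I), but the lemma applies to all $V\in V(\tilde\Gamma)\setminus V_{II}(\tilde\Gamma)$, hence also to vertices of type (III), and for those the component $Y_{\tilde h(V)}$ is in general \emph{not} toric: it is a blow-up of a toric surface at points of its toric boundary, containing the exceptional lines recorded by the red stubs (this is the whole point of the resolution in \S\ref{S:resolution}, and the reason the balancing condition in Definition \ref{defi:types}(III) is modified). Your compactification argument does not apply there; degenerations in which components fall into an exceptional line, or into the boundary near one, must be excluded by a different mechanism, namely the prescribed intersection with the exceptional classes. This is exactly why the paper's proof combines \cite{GPS}, Proposition 5.1, with Proposition 4.2, following \cite{Gra}, Lemma 4.5, rather than quoting the toric statement alone.

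Second, the step you yourself flag as the ``main obstacle'' is asserted rather than proved, and the slogan you offer for it is not correct as stated. Balanced genus-zero tropical curves can perfectly well have vertices in the interior of two-dimensional cones (a tropical line does), and such a vertex corresponds to a component contracted to a corner, which is a perfectly legitimate point of $\bar{\mathscr M}_V$. What rules these configurations out, under the hypothesis that only the \emph{incoming} marked points are required to lie in $D_E^\circ$ (the outgoing one is unconstrained), is a genuine argument interweaving the fixed curve class, the fact that the contact profile concentrates all boundary intersection at the marked points, and the tree structure: one must show that any subtree of components mapping into the boundary either drags an incoming marked point into a corner or forces a cycle of components, contradicting $g=0$. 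This is the ``loop construction'' of \cite{GPS}, Proposition 4.2 (invoked again in the paper in the proof of Proposition \ref{prop:gluing}), and it is precisely the mathematical content that your proposal defers instead of supplying. Incidentally, the genericity of $P$ in its chamber of $\mathscr S_{p+q}$ plays no role here: the lemma asserts properness of $\textup{ev}_{V,-}^\circ$ over every point of $\prod_{E\to V}D_E^\circ$, so no transversality assumption can be used.
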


\begin{proof}
Combine \cite{GPS}, Propositions 4.2 and 5.1, as in \cite{Gra}, Lemma 4.5.
\end{proof}

Since properness of morphisms is stable under base change, we obtain a proper moduli space by base change to a point $\gamma_V : \textup{Spec }\mathbb{C} \rightarrow \prod_{E\rightarrow V}D_E^\circ$, that is,
\[ \mathscr{M}_{\gamma_V} := \textup{Spec }\mathbb{C} \times_{\prod_{E\rightarrow V}D_E^\circ} \mathscr{M}_V^\circ \]
is a proper Deligne-Mumford stack.

\begin{lem}
\label{lem:vdim}
For $V\in V_{II}(\tilde{\Gamma})$ the virtual dimension of $\mathscr{M}_V$ is zero. Otherwise the virtual dimension of $\mathscr{M}_V$ equals the codimension of $\gamma_V$.
\end{lem}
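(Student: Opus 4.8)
The plan is to read off $\mathrm{vdim}\,\mathscr{M}_V^\circ$ from the general virtual dimension formula for genus-$0$ log stable maps and then to convert the answer into a count of flags at $V$, exploiting that $Y_{\tilde h(V)}$ is a toric surface.

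First I would invoke the virtual dimension formula: for the log smooth surface $(Y_v,\partial Y_v)$ with $v=\tilde h(V)$, the space of genus-$0$ stable log maps of class $\beta_V:=i_V^\star\beta_{p,q}$ carrying $\nu_V$ marked points has
\[ \mathrm{vdim}\,\mathscr{M}_V^\circ = \int_{\beta_V}c_1\big(T_{Y_v}(-\log\partial Y_v)\big) + (\dim Y_v - 3) + \nu_V . \]
Because $Y_v$ is toric, the logarithmic tangent bundle $T_{Y_v}(-\log\partial Y_v)$ is trivial, so the Chern number vanishes; with $\dim Y_v=2$ this leaves $\mathrm{vdim}\,\mathscr{M}_V^\circ=\nu_V-1$, where $\nu_V$ is the number of edges and legs of $\tilde\Gamma$ adjacent to $V$, since each such flag is the contact datum of one marked point of the local map. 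I would note that restricting to the open part $Y_v^\circ$ does not affect this number: as all maps in $\mathscr{M}_V^\circ$ are torically transverse, deleting the $0$-dimensional strata removes only lower-dimensional boundary loci of the moduli space.

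It then remains to identify $\nu_V-1$ with $\mathrm{codim}\,\gamma_V$. For $V\in V_{II}(\tilde\Gamma)$ the vertex is $1$-valent and, as recalled before Lemma \ref{lem:proper}, $\mathscr{M}_V^\circ$ is the space of $w_E$-fold covers of the exceptional $\mathbb{P}^1$ totally ramified over one point, which is rigid of virtual dimension $0$ (the standard multiple-cover computation, cf.\ \cite{GPS}); this agrees with $\nu_V-1=0$. For $V\notin V_{II}(\tilde\Gamma)$ I would use the orientation of $\tilde\Gamma$ as a tree rooted at $V_{\mathrm{out}}$: apart from the vertex $V_P$ treated below, every vertex has a single non-incoming flag --- the edge towards its parent, or the weight-$q$ leg at the root --- so the remaining $\nu_V-1$ flags point towards $V$ and are bounded edges. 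Each such incoming edge $E$ contributes its $1$-dimensional divisor $D_E^\circ$ to $\prod_{E\to V}D_E^\circ$, the target of $\mathrm{ev}_{V,-}^\circ$; for a type (III) vertex these are precisely the edges towards the type (II) neighbours, as dictated by the modified balancing of Definition \ref{defi:types}. Hence $\mathrm{codim}\,\gamma_V=\dim\prod_{E\to V}D_E^\circ=\nu_V-1=\mathrm{vdim}\,\mathscr{M}_V^\circ$.

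The step I expect to be most delicate is the bookkeeping at the distinguished bivalent vertex $V_P$ over $P$. Here both the edge towards the parent and the weight-$p$ leg are non-incoming, so $\nu_{V_P}-1=1$ although $V_P$ has no incoming edge; the missing unit of codimension is supplied by the point condition $\mathrm{ev}^\star[\mathrm{pt}]$ on $x_p$, which pins the image of the weight-$p$ marked point to the fixed point $P\in D$ and drops the dimension by one. I would therefore build this constraint into $\gamma_{V_P}$, so that $\mathrm{codim}\,\gamma_{V_P}=1=\mathrm{vdim}\,\mathscr{M}_{V_P}^\circ$, and confirm that no other vertex carries a free leg: in $\tilde{\mathfrak{H}}_{p,q}(P)$ the only legs are the two unbounded ones, of weights $p$ and $q$, so the flag count used above is exhaustive and the identification $\mathrm{vdim}\,\mathscr{M}_V^\circ=\mathrm{codim}\,\gamma_V$ holds for every $V\notin V_{II}(\tilde\Gamma)$.
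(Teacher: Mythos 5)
Your overall strategy is sound, and it is in fact more self-contained than the paper's own proof, which consists of a citation to \cite{GPS}, \S 5.3, and \cite{Gra}, Lemma 4.6. Your flag-counting argument (one non-incoming flag per vertex, namely the edge to the parent or the unbounded leg at the root) is the correct combinatorial core, and your treatment of the bivalent vertex over $P$ is exactly the intended reading of the paper: taken literally, $\prod_{E\rightarrow V_P}D_E^\circ$ is a point while $\mathscr{M}_{V_P}^\circ$ has virtual dimension $1$, so the point condition $\textup{ev}^\star[\textup{pt}]$ at $x_p$ must be absorbed into $\gamma_{V_P}$; this is forced by Proposition \ref{prop:N}, where that vertex contributes $N_V=1$, a count \emph{with} a point condition. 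Making this explicit is an improvement on the paper's exposition.

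There is, however, one genuine gap: the justification ``$Y_v$ is toric, hence $T_{Y_v}(-\log\partial Y_v)$ is trivial'' is false precisely for the vertices where it matters most. Type (III) (and type (II)) vertices map to vertices $v$ of the original $\mathscr{P}$, and there the component $Y_v$ is the \emph{non-toric} surface obtained by the small resolution: a blowup of a toric surface at points of its boundary, containing the exceptional lines --- this is the very reason types (II) and (III) exist. On such a component the log tangent bundle is not trivial (triviality would give a lifted torus action), so your Chern-number vanishing does not follow as stated. What does survive is the numerical statement you need: writing $\pi:Y_v\rightarrow X_v$ for the blowup and $\partial Y_v$ for the strict transform of the toric boundary, one has
\begin{equation*}
c_1\bigl(T_{Y_v}(-\log\partial Y_v)\bigr) \;=\; -(K_{Y_v}+\partial Y_v) \;=\; -\pi^\star(K_{X_v}+\partial X_v) \;=\; 0,
\end{equation*}
since blowing up smooth points of the boundary and taking the strict transform is log crepant. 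This correction is essential and not cosmetic: at a type (III) vertex the class $\beta_V$ has nonzero intersection $k_i$ with the exceptional curves, so one must know that the log Chern class vanishes on those classes as well; otherwise the virtual dimension would depend on the $k_i$ and the claimed equality $\textup{vdim}\,\mathscr{M}_V^\circ=\nu_V-1=\textup{codim}\,\gamma_V$ would be unproven for exactly the vertices that distinguish this setting from the purely toric one. With this one-line replacement of ``toric, hence trivial'' by ``log crepant blowup of toric, hence $c_1=0$'', your argument goes through in all cases.
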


\begin{proof}
See \cite{GPS}, {\S}5.3, and \cite{Gra}, Lemma 4.6.
\end{proof}

\begin{defi}
\label{defi:NV}
For a vertex $V$ of $\tilde{\Gamma}$ define
\[ N_V := \begin{cases} \int_{\llbracket\mathscr{M}_V^\circ\rrbracket}1, & V \in V_{II}(\tilde{\Gamma}); \\ \int_{\mathscr{M}_{\gamma_V}}\gamma_V^!\llbracket\mathscr{M}_V^\circ\rrbracket, & V \in V_I(\tilde{\Gamma})\cup V_{III}(\tilde{\Gamma}). \end{cases} \]
This is a finite number by Lemma \ref{lem:vdim} and independent of $\gamma_V$ by Lemma \ref{lem:proper}.
\end{defi}

\begin{prop}[\cite{Gra}, Proposition 4.8]\
\label{prop:N}
\begin{compactenum}[(I)]
\item For $V\in V_I(\tilde{\Gamma})$ let $e_1,\ldots,e_n$ be the edges of $\mathscr{P}_{p,q}$ adjacent to $\tilde{h}(V)$ and let $m_1,\ldots,m_n$ be the corresponding primitive vectors. Let $\textbf{w}_{i}=(w_{i1},\ldots,w_{il_i})$ be the weights of edges of $\tilde{\Gamma}$ mapping to $e_i$ and write $\textbf{w}=(\textbf{w}_1,\ldots,\textbf{w}_n)$. Then $N_V$ is the toric invariant $N_\textbf{m}(\textbf{w})$ as defined in \cite{GPS} and \cite{Gra}, {\S}4.1.
\item If $V\in V_{II}(\tilde{\Gamma})$, then
\[ N_V = \frac{(-1)^{w_E-1}}{w_E^2}, \]
where $E$ is the unique edge adjacent to $V$.
\item If $V\in V_{III}(\tilde{\Gamma})$, then
\[ N_V = \sum_{i=1}^{n(v)}\sum_{\textbf{w}_{V,+}} \frac{N_{\textbf{m}}(\textbf{w})}{|\textup{Aut}(\textbf{w}_{V,+})|} \prod_{i=1}^l\frac{(-1)^{w_{V,i}-1}}{w_{V,i}}. \]
Here $n(v)$ is the number of red stubs attached to $v$ (see \S\ref{S:resolution}). The second sum is over all weight vectors $\textbf{w}_{V,+,i}=(w_1,\ldots,w_{l_i})$ such that $\sum_{i=1}^{l_i} w_i = k_i$, with $k_i$ as in Proposition \ref{prop:balancing}, (III). Further, $N_{\textbf{m}}(\textbf{w})$ is a toric invariant as in (I) with $\textbf{m}=((m_{v,-,i})_i,(m_{v,+,i})_i)$ and $\textbf{w}=(((w_E)_{E\in E_{V,-,i})_i},(\textbf{w}_{V,+,i})_i)$, where $E_{V,-,i}$ is the set of edges adjacent to $V$ and mapped to direction $m_{v,-,i}$.
\end{compactenum}
The bivalent vertex $V$ mapping to $P$ is of type (I) and contributes $N_V=1$.
\end{prop}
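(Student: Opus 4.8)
The plan is to compute each local contribution $N_V$ from Definition \ref{defi:NV} by identifying the local moduli space $\mathscr{M}_V^\circ$ with a model whose invariant is already understood, treating the three vertex types separately. The computations for types (I)--(III) run as in \cite{Gra}, Proposition 4.8, with the toric-vertex and multiple-cover calculus imported from \cite{GPS}; the one ingredient with no counterpart in \cite{Gra}, forced by the second marked point, is the bivalent vertex mapping to $P$.

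For $V \in V_I(\tilde{\Gamma})$ the image $\tilde{h}(V)$ lies in the interior of a maximal cell of $\mathscr{P}_{p,q}$, away from the vertices of $\mathscr{P}$, so $Y_{\tilde{h}(V)}$ is the toric surface determined by the local fan. First I would note that $\mathscr{M}_V^\circ$ is then the space of genus-zero logarithmic stable maps to this surface with contact orders $\textbf{w}_i$ along the toric boundary divisors $D_{e_i}$ in the directions $m_i$. After the base change $\gamma_V$ fixing the incoming relative points, Lemma \ref{lem:vdim} gives virtual dimension zero, and matching the logarithmic obstruction theory against the relative one of \cite{GPS} identifies $N_V$ with the toric invariant $N_{\textbf{m}}(\textbf{w})$. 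The bivalent vertex at $P$ is the special case of type (I) with two edges of weight $p$ in the direction $m_{\textup{out}}$: the associated curve is a single rational component, and the point condition $\textup{ev}^\star[\textup{pt}]$ at the marked point $x_p$, carried by the weight-$p$ leg at $V$, cuts the local moduli down to one reduced point, so $N_V = 1$. This is the step at which the two-marked setup genuinely enters.

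For $V \in V_{II}(\tilde{\Gamma})$, Proposition \ref{prop:balancing} identifies the associated component as a $w_E$-fold cover of an exceptional $\mathbb{P}^1$ totally ramified over the attaching node. Then $\mathscr{M}_V^\circ$ is the proper space of such covers, and I would evaluate $\int_{\llbracket \mathscr{M}_V^\circ \rrbracket} 1$ either by torus localization on the space of totally ramified covers or by quoting the $\mathbb{P}^1$ multiple-cover computation of \cite{GPS}; both yield $N_V = \tfrac{(-1)^{w_E-1}}{w_E^2}$.

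The main obstacle is type (III). For $V \in V_{III}(\tilde{\Gamma})$ the image is a vertex $v$ of $\mathscr{P}$, where the exceptional curves are attached to the toric surface $X_v$ in the directions $m_{v,+,i}$. I would compute $N_V$ from the local model at $v$ by combining a toric vertex relative to all directions $m_{v,-,i}$ and $m_{v,+,i}$, which yields the invariant $N_{\textbf{m}}(\textbf{w})$ for the full weight data $\textbf{m} = ((m_{v,-,i})_i,(m_{v,+,i})_i)$, with the contributions of the multiple covers of the exceptional curves in the directions $m_{v,+,i}$. The crucial point is that, after gluing across each weight-$w$ node, the exceptional-curve contribution appears as $\tfrac{(-1)^{w-1}}{w}$ rather than the bare type (II) factor $\tfrac{(-1)^{w-1}}{w^2}$, the extra factor $w$ coming from the fibre product of logarithmic structures at the node. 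Summing over all partitions of the total multiplicity $k_i$ in each direction $m_{v,+,i}$ into a weight vector $\textbf{w}_{V,+,i}$, and dividing by $|\textup{Aut}(\textbf{w}_{V,+})|$ to absorb permutations of equal-weight branches, then reproduces the stated formula. The difficulty is exactly this bookkeeping of gluing and automorphism factors, which I would check against the gluing formula of Proposition \ref{prop:gluing} and the symmetric treatment in \cite{Gra}, {\S}4.
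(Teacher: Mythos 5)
The first thing to say is that the paper contains no proof of this proposition at all: parts (I)--(III) are imported wholesale from \cite{Gra}, Proposition 4.8 (as the bracketed attribution indicates), and the final claim that the bivalent vertex over $P$ contributes $N_V=1$ is simply asserted. So your reconstruction of (I)--(III) cannot be compared with an argument in the paper, only with the cited one --- and there it is on track: the toric vertex calculus of \cite{GPS} for type (I), the multiple-cover computation giving $(-1)^{w_E-1}/w_E^2$ for type (II), and for type (III) the combination of a toric invariant with exceptional multiple covers, where the node gluing converts $(-1)^{w-1}/w^2$ into $(-1)^{w-1}/w$ and the sum over partitions of $k_i$ with the $|\textup{Aut}(\textbf{w}_{V,+})|$ factor appears. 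That part of your proposal is consistent with the source the paper relies on.

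The genuine gap is in the one statement that is new relative to \cite{Gra}, which you correctly flag as the crux and then dispatch in one sentence: that the point condition ``cuts the local moduli down to one reduced point, so $N_V=1$''. For $p\geq 2$ this is not an argument. The component at this vertex is a degree-$p$ cover of the ruling fiber of $Y_{\tilde{h}(V)}$ through the fixed point, totally ramified over the two boundary divisors $D_E$ and $D_{\textup{out}}$; such a cover has automorphism group $\mu_p$ (acting by $z\mapsto \zeta z$ on the source), and these automorphisms persist for the associated log map. Hence the fiber of the evaluation map over the fixed point is a $\mu_p$-gerbe over a point, not a reduced point, and the integral prescribed by Definition \ref{defi:NV}, read naively, yields $1/p$. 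To land on $1$ one must exhibit the compensating factor of $p$ --- from the count of log enhancements at the tangency point, or from the degree conventions in the cut morphism of \cite{KLR} entering $\gamma_V^!$ --- and this bookkeeping is exactly the normalization on which the factor $p$ in Theorem \ref{thm:degmax} and in Theorem \ref{thm:main} hinges; an argument at the level of yours would equally well ``prove'' $N_V=1/p$, which would propagate to a wrong power of $p$ in the theta-function formula. As written, your proposal establishes the new claim only in the trivial case $p=1$. A smaller slip in the same spirit: for $V\in V_I(\tilde{\Gamma})$ you place $\tilde{h}(V)$ in the interior of a maximal cell of $\mathscr{P}_{p,q}$; in fact all vertices of $\tilde{\Gamma}$ map to vertices of $\mathscr{P}_{p,q}$ (were the image interior to a maximal cell, the stratum $Y_{\tilde{h}(V)}$ would be a $0$-dimensional stratum, not a toric surface), and the defining property of type (I) is only that $\tilde{h}(V)$ is not a vertex of the original $\mathscr{P}$.
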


Define $\bigtimes_{V\in V(\tilde{\Gamma})}\mathscr{M}_V$ to be the moduli space of stable log maps in $\prod_V\mathscr{M}_V$ matching over the divisors $D_E$, $E\in E(\tilde{\Gamma})$, i.e., the fiber product
\begin{equation*}
\begin{xy}
\xymatrix{
\displaystyle\bigtimes_V\mathscr{M}_V \ar[rr]\ar[d]						&& \displaystyle\prod_V\mathscr{M}_V \ar[d]^{\textup{ev}} \\
\displaystyle\prod_{E\in E(\tilde{\Gamma})} D_E \ar[rr]^\delta				&& \displaystyle\prod_V\prod_{\substack{E\in E(\tilde{\Gamma}) \\ V \in E}} D_E
}
\end{xy}
\end{equation*}

By \cite{KLR} there is an \'etale morphism $\textup{cut} : \mathscr{M}_{\tilde{h}} \rightarrow \bigtimes_V\mathscr{M}_V^\circ$ of degree $\textup{deg}(\textup{cut}) = (\prod_{E\in E(\tilde{\Gamma})}w_E)/l_{\tilde{\Gamma}}$, where $l_{\tilde{\Gamma}} = \textup{lcm}\{w_E\}$. By compatibility of obstruction theories (\cite{KLR}, {\S}9) we have
\[ \llbracket\mathscr{M}_{\tilde{h}}\rrbracket = \textup{cut}^\star \delta^!\prod_{V\in V(\tilde{\Gamma})}\llbracket\mathscr{M}_V^\circ\rrbracket. \]
By the projection formula $\textup{cut}_\star\textup{cut}^\star$ is multiplication with $\textup{deg}(\textup{cut})$, so
\[ \textup{cut}_\star\llbracket\mathscr{M}_{\tilde{h}}\rrbracket = \frac{1}{\ell_{\tilde{\Gamma}}}\prod_{E\in E(\tilde{\Gamma})}w_E \cdot \delta^!\prod_{V\in V(\tilde{\Gamma})}\llbracket\mathscr{M}_V^\circ\rrbracket. \]

\begin{prop}[Gluing formula]
\label{prop:gluing}
\[ \int_{\llbracket\mathscr{M}_{\tilde{h}}\rrbracket}1 = \frac{1}{\ell_{\tilde{\Gamma}}}\prod_{E\in E(\tilde{\Gamma})}w_E \cdot \int_{\delta^!\prod_V\llbracket\mathscr{M}_V\rrbracket}1. \]
\end{prop}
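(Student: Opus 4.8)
The plan is to obtain the gluing formula by integrating the pushforward identity displayed immediately before the statement, using that $\textup{cut}$ is proper so that proper pushforward preserves the degree of a zero-dimensional class. All of the genuine geometric input---the compatibility of obstruction theories from \cite{KLR} and the projection formula $\textup{cut}_\star\textup{cut}^\star=\deg(\textup{cut})$---has already been assembled, so the proof is essentially an application of functoriality of proper pushforward.

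Concretely, I would proceed as follows. The stack $\mathscr{M}_{\tilde{h}}$ is a closed substack of the proper Deligne--Mumford stack $\mathscr{M}(Y,\beta_{p,q})$, hence proper, and the point condition at $P$ (encoded by the bivalent vertex of the weight-$p$ leg mapping to $P$, cf.\ Proposition \ref{prop:N}) makes $\llbracket\mathscr{M}_{\tilde{h}}\rrbracket$ a zero-dimensional class; thus $\int_{\llbracket\mathscr{M}_{\tilde{h}}\rrbracket}1=\deg\llbracket\mathscr{M}_{\tilde{h}}\rrbracket$. Since $\textup{cut}:\mathscr{M}_{\tilde{h}}\to\bigtimes_V\mathscr{M}_V^\circ$ is \'etale of finite degree, it is in particular finite and therefore proper, and proper pushforward preserves the degree of a zero-cycle, giving
\[ \int_{\llbracket\mathscr{M}_{\tilde{h}}\rrbracket}1=\deg\bigl(\textup{cut}_\star\llbracket\mathscr{M}_{\tilde{h}}\rrbracket\bigr)=\int_{\textup{cut}_\star\llbracket\mathscr{M}_{\tilde{h}}\rrbracket}1. \]
Substituting the pushforward formula $\textup{cut}_\star\llbracket\mathscr{M}_{\tilde{h}}\rrbracket=\tfrac{1}{\ell_{\tilde{\Gamma}}}\prod_{E}w_E\cdot\delta^!\prod_V\llbracket\mathscr{M}_V^\circ\rrbracket$ and pulling the scalar $\tfrac{1}{\ell_{\tilde{\Gamma}}}\prod_E w_E$ out of the degree map yields exactly the asserted identity, where the right-hand factor $\llbracket\mathscr{M}_V\rrbracket$ is read as the class $\llbracket\mathscr{M}_V^\circ\rrbracket$ entering that formula.

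The step requiring the most care---and the main obstacle---is ensuring that the right-hand integral $\int_{\delta^!\prod_V\llbracket\mathscr{M}_V\rrbracket}1$ is a well-defined number. The individual factors $\mathscr{M}_V^\circ$ are complements of the $0$-dimensional strata and need not be proper, so a priori $\delta^!\prod_V\llbracket\mathscr{M}_V^\circ\rrbracket$ need not have proper support. This is precisely what the properness of the evaluation maps $\textup{ev}_{V,-}^\circ$ (Lemma \ref{lem:proper}) secures: after base change to the points $\gamma_V$ the refined Gysin operator $\delta^!$ lands on the proper fiber product cut out over the divisors $D_E$, so its degree is defined and, by Definition \ref{defi:NV}, factors as $\prod_V N_V$. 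I would therefore make this properness explicit at the outset, so that all three displayed equalities above are legitimate manipulations of degrees of genuine zero-dimensional classes.
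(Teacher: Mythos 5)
There is a genuine gap, and it sits exactly where you yourself located ``the step requiring the most care'': the passage from the Chow--class identity $\textup{cut}_\star\llbracket\mathscr{M}_{\tilde{h}}\rrbracket = \tfrac{1}{\ell_{\tilde{\Gamma}}}\prod_E w_E\cdot\delta^!\prod_V\llbracket\mathscr{M}_V^\circ\rrbracket$ to an identity of degrees. That identity lives in $A_*\bigl(\bigtimes_V\mathscr{M}_V^\circ\bigr)$, and on a non-proper stack the degree of a zero-cycle is \emph{not} an invariant of its rational equivalence class (on $\mathbb{A}^1$ every point is rationally equivalent to the empty cycle), so even granting that both sides admit representatives with proper support you cannot ``pull the scalar out and integrate''. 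What is actually needed is that $\bigtimes_V\mathscr{M}_V^\circ$ is proper, equivalently that it is all of the proper fiber product $\bigtimes_V\mathscr{M}_V$ that appears in the statement. Your appeal to Lemma \ref{lem:proper} does not deliver this: that lemma gives properness of the evaluation maps $\textup{ev}_{V,-}^\circ$ at the \emph{incoming} edges of a single vertex, and a fiber product need not be proper just because one of its structure maps is (consider $\mathbb{A}^1\times_{\textup{pt}}\textup{pt}$); an induction along the rooted tree breaks down at leaves with non-proper $\mathscr{M}_V^\circ$ and at the outgoing-edge evaluations, which the lemma does not control. Moreover, your phrase ``after base change to the points $\gamma_V$'' conflates the gluing Gysin map $\delta^!$ (refined pullback along the diagonal of $\prod_E D_E$) with the point conditions $\gamma_V$: the latter enter only in Definition \ref{defi:NV} and Proposition \ref{prop:unique}, not in the statement being proved, and reading $\llbracket\mathscr{M}_V\rrbracket$ as $\llbracket\mathscr{M}_V^\circ\rrbracket$ quietly changes the proposition rather than proving it.

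The paper's proof consists precisely of closing this hole. Both cycles are regarded on the proper stack $\bigtimes_V\mathscr{M}_V$; the KLR identity says they agree after restriction to the open substack $\bigtimes_V\mathscr{M}_V^\circ$, so their difference is supported on $Z=(\bigtimes_V\mathscr{M}_V)\setminus(\bigtimes_V\mathscr{M}_V^\circ)$; and then a geometric argument -- the loop construction of \cite{GPS}, Proposition 4.2, resp.\ \cite{Gra}, Lemma 4.5, showing that a matching tuple $(f_V)$ in which some component meets a $0$-dimensional toric stratum forces a nontrivial cycle of components in a source curve, contradicting $g=0$ -- shows $Z=\emptyset$. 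This emptiness is the sole genuinely geometric input of the proposition, it is what makes the open fiber product proper and the right-hand integral well defined, and it is absent from your argument. Your first step (properness of $\mathscr{M}_{\tilde{h}}$, hence of $\textup{cut}$ onto the separated target, and invariance of degree under proper pushforward) is correct, and once $Z=\emptyset$ is established your manipulation and the paper's conclusion coincide; but without that step, or some equivalent properness statement for the glued moduli space, the substitution in your second paragraph is not a legitimate operation on degrees.
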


\begin{proof}
By the above formula, the cycles $\textup{cut}_\star\llbracket\mathscr{M}_{\tilde{h}}\rrbracket$ and $\frac{1}{\ell_{\tilde{\Gamma}}}\prod_{E\in E(\tilde{\Gamma})}w_E \cdot \delta^!\prod_V\llbracket\mathscr{M}_V\rrbracket$ have the same restriction to the open substack $\bigtimes_V\mathscr{M}_V^\circ$ of $\bigtimes_V\mathscr{M}_V$. Hence their difference is rationally equivalent to a cycle supported on the  closed substack $Z:=(\bigtimes_V\mathscr{M}_V)\setminus(\bigtimes_V\mathscr{M}_V^\circ)$. Suppose there exists an element $(f_V : C_V \rightarrow Y_{\tilde{h}(V)})_{V\in V(\tilde{\Gamma})}\in Z$. Then at least one of the source curves $C_V$ would contain a nontrivial cycle of components as can be seen by a loop construction as in the proof of \cite{GPS}, Proposition 4.2, or \cite{Gra}, Lemma 4.5. This contradicts $g=0$, so $Z$ is empty, completing the proof.
\end{proof}

\begin{prop}[Unique gluing]
\label{prop:unique}
\[ \int_{\delta^!\prod_{V}\llbracket\mathscr{M}_V\rrbracket} 1 = \prod_{V\in V(\tilde{\Gamma})}N_V. \]
\end{prop}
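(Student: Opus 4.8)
The plan is to exploit that $\tilde\Gamma$ is a tree and to reduce the global fiber-product integral to a product of local vertex contributions by peeling the tree off from its leaves toward the root $V_{\textup{out}}$. The observation I would isolate first is that $N_V$ is exactly the virtual degree of the incoming evaluation map. Indeed, by Lemma \ref{lem:vdim} the virtual dimension of $\mathscr{M}_V^\circ$ equals $\dim\prod_{E\to V}D_E^\circ$, and by Lemma \ref{lem:proper} this degree is independent of the chosen basepoint $\gamma_V$, so that
\[ (\textup{ev}_{V,-}^\circ)_\star\llbracket\mathscr{M}_V^\circ\rrbracket = N_V\cdot\Big[\textstyle\prod_{E\to V}D_E^\circ\Big]. \]
For a type (II) vertex, where the incoming product is empty, the same statement reads that the pushforward $(\textup{ev}_{E_V})_\star\llbracket\mathscr{M}_V^\circ\rrbracket$ of the evaluation at its unique (outgoing) edge $E_V$ is a $0$-cycle of degree $N_V=(-1)^{w_E-1}/w_E^2$ on $D_{E_V}$.

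\textbf{Induction over subtrees.} For a vertex $V$ let $T_V$ be the subtree consisting of $V$ and all its descendants, and let $E_V$ be the unique edge joining $V$ to its parent. Matching only the edges internal to $T_V$ produces a fiber product $\mathscr{M}_{T_V}$ carrying a refined class $\delta_{T_V}^!\prod_{V'\in T_V}\llbracket\mathscr{M}_{V'}^\circ\rrbracket$. A dimension count shows this class has virtual dimension $0$: since $\mathrm{vdim}\,\mathscr{M}_{V'}$ equals the number of incoming edges at $V'$ (and $0$ for type (II)), the total virtual dimension over $T_V$ equals the number of edges internal to $T_V$, which is $\#T_V-1$, while the matchings drop dimension by the same amount. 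The inductive claim is that the pushforward under the outgoing evaluation $\textup{ev}_{E_V}$ to $D_{E_V}$ is a $0$-cycle of degree $\prod_{V'\in T_V}N_{V'}$.

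\textbf{Inductive step.} This is where the projection formula does the work. Writing $V$ for a vertex with incoming children $V_1,\dots,V_k$ along edges $E_1,\dots,E_k$, the induction hypothesis supplies $0$-cycles $\alpha_j=(\textup{ev}_{E_j})_\star\llbracket\mathscr{M}_{T_{V_j}}\rrbracket$ of degrees $d_j=\prod_{V'\in T_{V_j}}N_{V'}$. Compatibility of $\delta^!$ with products lets me match the $\alpha_j$ against $\textup{ev}_{V,-}^\circ:\mathscr{M}_V^\circ\to\prod_j D_{E_j}^\circ$; since $\prod_j\alpha_j$ is a $0$-cycle of degree $\prod_j d_j$, the projection formula identifies the resulting class on $\mathscr{M}_V^\circ$ with $\prod_j d_j$ copies of the class obtained by imposing a general point condition on each incoming divisor. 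By the independence statement of Lemma \ref{lem:proper} this is precisely $\big(\prod_j d_j\big)\,\gamma_V^!\llbracket\mathscr{M}_V^\circ\rrbracket$ in the sense of Definition \ref{defi:NV}; pushing it forward along $\textup{ev}_{E_V}$ to $D_{E_V}$ yields a $0$-cycle of degree $\big(\prod_j d_j\big)N_V=\prod_{V'\in T_V}N_{V'}$, completing the induction. At the root $V_{\textup{out}}$, which has no outgoing edge, the identical computation produces the number $\prod_V N_V$ directly, and the bivalent vertex at $P$ is carried along passively, entering with its factor $N_V=1$.

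\textbf{Main obstacle.} I expect the delicate point to be the bookkeeping of the Gysin maps: one must verify that the iterated, edge-by-edge application of $\delta^!$ along the tree agrees with the single $\delta^!$ of the statement, using commutativity and compatibility of refined Gysin pullbacks for the product of diagonals, and that matching two marked points over a divisor $D_E$ through the diagonal imposes an honest point condition without an extra tangency factor. The weight factors $w_E$ are deliberately absent here and are instead collected in Proposition \ref{prop:gluing}, so the matchings in the present statement must be seen to be genuinely degree one. Properness (Lemma \ref{lem:proper}) is the tool that legitimizes every pushforward and the passage to a general $\gamma_V$; genus $0$ (the tree property) is what makes the telescoping possible. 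The argument is formally parallel to the one-marked case treated in \cite{Gra}, with the second unbounded leg of weight $p$ threaded through the bivalent vertex at $P$.
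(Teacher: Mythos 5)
Your proposal is correct, and it rests on exactly the same inputs as the paper's proof --- the rooted-tree orientation, the dimension count of Lemma \ref{lem:vdim}, the properness of Lemma \ref{lem:proper}, and Definition \ref{defi:NV} --- but it is organized along a genuinely different route. The paper (delegating details to \cite{Gra}, Proposition 4.13) expands the product of diagonal classes into a sum over all possible gluings, i.e.\ over all ways of distributing point conditions to the two sides of each edge, and then kills every term except the orientation-compatible one by observing that any other distribution puts too many conditions on some vertex, forcing a negative virtual dimension; the surviving term is $\prod_V N_V$. You never enumerate gluings at all: you identify $(\textup{ev}_{V,-}^\circ)_\star\llbracket\mathscr{M}_V^\circ\rrbracket = N_V\cdot\bigl[\prod_{E\to V}D_E^\circ\bigr]$ as the content of Definition \ref{defi:NV}, and then telescope this up the tree with the projection formula and the commutativity of refined Gysin maps with proper pushforward, so that the orientation-compatible distribution of conditions emerges automatically rather than as the sole survivor of a vanishing argument. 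The paper's formulation buys a transparent explanation of the word ``unique'' in the proposition; yours buys a self-contained computation that does not lean on \cite{Gra}. One point in your write-up needs more care than you give it: the phrase ``$0$-cycle of degree $d_j$'' on $D_{E_j}^\circ$ is delicate, because the punctured divisors $D_E^\circ$ are not proper and degree is not an invariant of rational equivalence there (on $\mathbb{A}^1$ every $0$-cycle is rationally equivalent to zero); likewise $\gamma_V^!\llbracket\mathscr{M}_V^\circ\rrbracket$ for different $\gamma_V$ live on different spaces $\mathscr{M}_{\gamma_V}$, so they can only be compared after integration, not as classes. The repair is the one you gesture at: the subtree fiber products $\mathscr{M}_{T_V}$ are proper (by induction up the tree, using base change along the proper evaluations of Lemma \ref{lem:proper}), so you may run the induction with honest cycles $\sum_i n_i[x_i]$ rather than rational equivalence classes, and the independence of $\gamma_V$ recorded in Definition \ref{defi:NV} is exactly what lets you replace each contribution $x_i^!\llbracket\mathscr{M}_V^\circ\rrbracket$ by the same number $N_V$ after pushing forward. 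With that reading your induction closes, and your observation that the weight factors $w_E$ belong to Proposition \ref{prop:gluing} (through the degree of the cut morphism) and must not reappear here is also correct.
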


\begin{proof}
This is similar to the proof of \cite{Gra}, Proposition 4.13. As before, $\tilde{\Gamma}$ is a rooted tree with root vertex $V_{\textup{out}}$. This gives an orientation of the edges of $\tilde{\Gamma}$. The only gluing that gives a nonzero contribution after integration is the one according to this orientation. Any other gluing gives a negative virtual dimension, since we have too many conditions on one of the irreducible components.
\end{proof}

\begin{thm}[Degeneration formula]
\label{thm:deg}
Let $P$ be a point in an unbounded chamber of $\mathscr{S}_{p+q}$. Then
\[ N_{p,q}(X,\beta) = \sum_{\tilde{h}\in\tilde{\mathfrak{H}}_{p,q}^\beta(P)} \frac{1}{|\textup{Aut}(\tilde{h})|} \cdot \prod_{E\in E(\tilde{\Gamma})}w_E \cdot \prod_{V\in V(\tilde{\Gamma})} N_V. \]
\end{thm}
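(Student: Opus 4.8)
The plan is to chain together the three results established in this subsection --- the decomposition formula (Proposition \ref{prop:dec}), the gluing formula (Proposition \ref{prop:gluing}) and the unique-gluing statement (Proposition \ref{prop:unique}) --- in exactly that order, after using log-modification invariance to transport the computation to the central fiber $Y$. I would begin from the identity $N_{p,q}(X,\beta)=\int_{\llbracket\mathscr{M}(Y,\beta_{p,q})\rrbracket}\textup{ev}^\star[\textup{pt}]$ recorded above. The first point to settle is that the evaluation class $\textup{ev}^\star[\textup{pt}]$ is already encoded in the tropical combinatorics: since $P$ is a vertex of the refinement $\mathscr{P}_{p,q}$ and, for generic $P$, only tropical curves whose weight-$p$ leg passes through $P$ contribute, capping with $\textup{ev}^\star[\textup{pt}]$ restricts the decomposition to the types $\tilde{h}\in\tilde{\mathfrak{H}}_{p,q}^\beta(P)$. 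Moreover, each such $\tilde{h}$ carries a bivalent vertex on the weight-$p$ leg mapping to $P$, so the point constraint localizes on this vertex, whose contribution is $N_V=1$ by the final assertion of Proposition \ref{prop:N}.

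Next I would apply Proposition \ref{prop:dec} to write $\llbracket\mathscr{M}(Y,\beta_{p,q})\rrbracket=\sum_{\tilde{h}}\frac{l_{\tilde{\Gamma}}}{|\textup{Aut}(\tilde{h})|}F_\star\llbracket\mathscr{M}_{\tilde{h}}\rrbracket$, the sum running over $\tilde{\mathfrak{H}}_{p,q}^\beta(P)$. Integrating $\textup{ev}^\star[\textup{pt}]$ against each term and applying the projection formula to $F$ reduces the problem, for a fixed $\tilde{h}$, to the point-constrained integral over $\llbracket\mathscr{M}_{\tilde{h}}\rrbracket$, where the constraint sits on the bivalent vertex at $P$. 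For this integral the gluing formula of Proposition \ref{prop:gluing} gives $\frac{1}{l_{\tilde{\Gamma}}}\prod_{E\in E(\tilde{\Gamma})}w_E\cdot\int_{\delta^!\prod_V\llbracket\mathscr{M}_V\rrbracket}1$, and the unique-gluing statement of Proposition \ref{prop:unique} evaluates the remaining integral as $\prod_{V\in V(\tilde{\Gamma})}N_V$, in which the factor $N_V=1$ of the bivalent vertex accounts for the point condition.

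Assembling the pieces, the factor $l_{\tilde{\Gamma}}$ from the decomposition formula cancels against the $1/l_{\tilde{\Gamma}}$ produced by the gluing formula, leaving exactly $\frac{1}{|\textup{Aut}(\tilde{h})|}\prod_{E}w_E\prod_V N_V$ for each $\tilde{h}$, which is the claimed summand. I expect the main obstacle to be the careful handling of the point condition: one must verify that imposing $\textup{ev}^\star[\textup{pt}]$ genuinely localizes on the bivalent vertex at $P$, so that Proposition \ref{prop:N} applies there and yields $N_V=1$ rather than an obstructed higher-dimensional contribution, and that after this localization the virtual dimensions match, so the surviving vertex integrals are precisely the zero-dimensional numbers $N_V$ of Definition \ref{defi:NV}. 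Once this is in place, the cancellation of $l_{\tilde{\Gamma}}$ and the bookkeeping of the automorphism factors are routine.
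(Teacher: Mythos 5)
Your proposal follows exactly the paper's own proof: compute $N_{p,q}(X,\beta)$ on the central fiber $Y$ using invariance under logarithmic modification, integrate the decomposition formula (Proposition \ref{prop:dec}), then apply the gluing formula (Proposition \ref{prop:gluing}) and unique gluing (Proposition \ref{prop:unique}), with the factor $l_{\tilde{\Gamma}}$ from the decomposition cancelling against the $1/l_{\tilde{\Gamma}}$ from the gluing. Your explicit discussion of how $\textup{ev}^\star[\textup{pt}]$ localizes at the bivalent vertex over $P$ (contributing $N_V=1$ by the last assertion of Proposition \ref{prop:N}) is left implicit in the paper's terse proof, but it is precisely the intended mechanism, so the two arguments coincide.
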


\begin{proof}
Since the virtual dimension of $\mathscr{M}_\beta$ is zero, integration (i.e., proper pushforward to a point) of the decomposition formula (Proposition \ref{prop:dec}) gives
\[ N_{p,q}(X,\beta) = \sum_{\tilde{h}\in\tilde{\mathfrak{H}}_{p,q}^\beta(P)} \frac{1}{|\textup{Aut}(\tilde{h})|}\int_{\llbracket\mathscr{M}_{\tilde{h}}\rrbracket} 1. \]
Using Propositions \ref{prop:gluing} and \ref{prop:unique} we get the above formula.
\end{proof}

We get a more symmetric formula by summing over balanced tropical curves:

\begin{defi}
\label{defi:Ntor}
Let $h : \Gamma \rightarrow B$ be a tropical curve in $\mathfrak{H}_\beta$ and let $V$ be a vertex of $\Gamma$. Then the image of $V$ under the map from Lemma \ref{lem:tilde} is a vertex of $\tilde{\Gamma}$ of type (I) or (III). Let $\textbf{m}$ and $\textbf{w}$ be as in the respective case of Proposition \ref{prop:N} and define 
$ N_V^{\textup{tor}} := N_{\textbf{m}}(\textbf{w}). $
Note that $N_V^{\textup{tor}}=N_V$ for vertices of type (I).
\end{defi}

\begin{defi}
\label{defi:Nh}
Define $N_{p,q}^{\textup{trop}}(X,\beta)=\sum_{h\in\mathfrak{H}_{p,q}^\beta(P)} N_h$, where $P$ is a point in an unbounded chamber of $\mathscr{S}_{p+q}$ and
\[ N_h := \frac{1}{|\textup{Aut}(h)|} \cdot \prod_{E\in E(\Gamma)}w_E\cdot \prod_{E\in L_\Delta(\Gamma)}\frac{(-1)^{w_E-1}}{w_E}\cdot\prod_{V\in V(\Gamma)} N_V^{\textup{tor}}. \]
Here $L_\Delta(\Gamma)$ is the set of bounded legs of $\Gamma$. 
\end{defi}

Recall the definition of tropical mutliplicity from Definition \ref{defi:mult}.

\begin{prop}
For a tropical curve $h : \Gamma \rightarrow B$ in $\mathfrak{H}_q$ we have
\[ N_h = \textup{Mult}(h) \]
\end{prop}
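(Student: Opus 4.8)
The plan is to compare the two expressions factor by factor. Both $N_h$ and $\textup{Mult}(h) = m_h$ of Definition \ref{defi:mult} carry the prefactor $\frac{1}{|\textup{Aut}(h)|}$, so this cancels and it remains to match the products over vertices and bounded legs. For a bounded leg $E \in L_\Delta(\Gamma)$ the factor appearing in $N_h$ is $\frac{(-1)^{w_E-1}}{w_E}$, while $\textup{Mult}(h)$ contributes $m_E = \frac{(-1)^{w_E+1}}{w_E^2}$; since $(-1)^{w_E-1}=(-1)^{w_E+1}$, these are related by $\frac{(-1)^{w_E-1}}{w_E} = w_E \, m_E$. Collecting this over all bounded legs, the asserted equality $N_h = \textup{Mult}(h)$ is equivalent to the single sign-free \emph{core identity}
\[ \prod_{E \in E(\Gamma) \cup L_\Delta(\Gamma)} w_E \cdot \prod_{V \in V(\Gamma)} N_V^{\textup{tor}} = \prod_{V \in V(\Gamma)} m_V, \]
which I would prove by localizing at the vertices.

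First I would reduce to the case in which every vertex of $\Gamma$ is trivalent. On the $\textup{Mult}$ side this is built into Definition \ref{defi:mult}: $m_V$ at a higher-valent vertex is the product $\prod_{V'} m_{V'}$ over a trivalent deformation $h'_V$, independent of the deformation by \cite{GPS}, Proposition 2.7. On the $N_h$ side the toric invariant $N_V^{\textup{tor}} = N_{\textbf{m}}(\textbf{w})$ admits the analogous decomposition: by the tropical calculus for the invariants $N_{\textbf{m}}(\textbf{w})$ of \cite{GPS} it is computed through trivalent tropical vertices in the associated toric surface, and the weights of the new internal edges created by subdividing the vertex supply exactly the extra terms of $\prod_E w_E$ produced by the subdivision. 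This is the same reduction used in \cite{Gra}. Hence both sides of the core identity are compatible with trivalent deformation, and it suffices to treat trivalent vertices.

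For a trivalent vertex the content is the explicit evaluation of the toric vertex invariant from \cite{GPS}. For a vertex of type (I), Proposition \ref{prop:N}(I) gives $N_V^{\textup{tor}} = N_V = N_{\textbf{m}}(\textbf{w})$, the count of a single rational curve with the prescribed tangencies, which \cite{GPS} evaluates in terms of the lattice index of the adjacent weight vectors; this matches $m_V = |u_{(V,E_1)} \wedge u_{(V,E_2)}|$ once the weights of the three adjacent flags are taken into account. For a vertex of type (III), Definition \ref{defi:Ntor} records only the toric part $N_{\textbf{m}}(\textbf{w})$ of the type (III) invariant of Proposition \ref{prop:N}(III); the remaining multiple-cover factors $\frac{(-1)^{w-1}}{w}$ coming from the attached type (II) vertices are precisely the bounded-leg factors that Definition \ref{defi:Nh} assigns to $N_h$, and so have already been accounted for in passing to the core identity. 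In both cases one is reduced to matching $N_{\textbf{m}}(\textbf{w})$ against $m_V$ together with the appropriate edge weights.

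The step I expect to be the main obstacle is the precise bookkeeping of the weights: one must verify that the single factor $\prod_{E \in E(\Gamma)} w_E$, the weight dependence hidden inside each $N_{\textbf{m}}(\textbf{w})$, and the residual powers $\prod_{E \in L_\Delta} w_E$ left over from the comparison of $\frac{(-1)^{w_E-1}}{w_E}$ with $m_E$ combine to give $\prod_V m_V$ with no leftover weights, each internal edge weight being split correctly between its two endpoints. This rests entirely on the normalization of $N_{\textbf{m}}(\textbf{w})$ in \cite{GPS} and is identical to the verification carried out in \cite{Gra} for the $1$-marked correspondence; the second unbounded leg introduced here does not interact with any wall and therefore plays no role in the vertex-local comparison.
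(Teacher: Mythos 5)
Your reduction to the core identity
\[ \prod_{E \in E(\Gamma) \cup L_\Delta(\Gamma)} w_E \cdot \prod_{V \in V(\Gamma)} N_V^{\textup{tor}} \;=\; \prod_{V \in V(\Gamma)} m_V \]
is correct, and the sign cancellation $(-1)^{w_E-1}=(-1)^{w_E+1}$ is handled properly (indeed your bookkeeping of the bounded-leg weights is more careful than the paper's own write-up). The gap is that what remains after this reduction is not a normalization formality that can be deferred: it is the entire mathematical content of the proposition, and you leave it explicitly as an ``obstacle'' you expect to follow from \cite{GPS} and \cite{Gra} without stating what the input is. The paper resolves it with one precise citation: the tropical correspondence theorem with point conditions on toric divisors (\cite{GPS}, Theorem 3.4), which gives the vertex-local identity
\[ m_V \;=\; \prod_{E \rightarrow V} w_E \cdot N_V^{\textup{tor}}, \]
where the product runs over the edges and bounded legs pointing \emph{into} $V$ with respect to the rooted-tree orientation of $\Gamma$ (all edges oriented towards the root $V_{\textup{out}}$). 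Since $\Gamma$ is a tree, every bounded edge and every bounded leg points into exactly one vertex, so multiplying this identity over all vertices reproduces each weight $w_E$ exactly once; that is precisely the core identity, with no trivalent reduction needed at all.

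This also corrects the mechanism you anticipate: you expect ``each internal edge weight being split correctly between its two endpoints,'' but in the correct accounting nothing is split. The full weight $w_E$ of an internal edge is absorbed at the single vertex it points into, and the vertex at its other end receives no factor from $E$; it is exactly this asymmetry, supplied by the orientation, that makes the global product close up. Without the directed identity above (or an equivalently precise statement), your trivalent-vertex step --- matching $N_{\textbf{m}}(\textbf{w})$ against $m_V = \lvert u_{(V,E_1)} \wedge u_{(V,E_2)}\rvert$ ``once the weights of the three adjacent flags are taken into account'' --- is a restatement of the claim to be proved, not a proof of it.
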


\begin{proof}
By the tropical correspondence theorem with point conditions on toric divisors (\cite{GPS}, Theorem 3.4) we have $m_V = \prod_{E\rightarrow V}w_E \cdot N_V^{\textup{tor}}$. The product is over all eges of $\Gamma$ pointing towards $V$ with respect to the orientation of $\Gamma$ such that all edges point towards the root vertex $V_{\textup{out}}$. Then 
\[ \prod_{V\in V(\Gamma)}m_V = \prod_{E\in E(\Gamma)} w_E \cdot \prod_{V\in V(\Gamma)} N_V^{\textup{tor}}, \]
as each $E\in E(\Gamma)$ occurs exactly once. Plugging this and $m_E=(-1)^{w_E+1}/w_E^2$ into the definition of $\textup{Mult}(h)$ we obtain $N_h$.
\end{proof}

\begin{thm}[Tropical correspondence theorem]
\label{thm:degmax}
\[ N_{p,q}^{\textup{trop}}(X,\beta) = pN_{p,q}(X,\beta). \]
\end{thm}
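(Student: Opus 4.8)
The plan is to compare the two descriptions of the invariant that are now at our disposal: the degeneration formula of Theorem~\ref{thm:deg}, which writes $N_{p,q}(X,\beta)$ as a weighted count over the modified tropical curves $\tilde h\in\tilde{\mathfrak{H}}_{p,q}^\beta(P)$, and the quantity $N_{p,q}^{\textup{trop}}(X,\beta)=\sum_{h\in\mathfrak{H}_{p,q}^\beta(P)}N_h$ of Definition~\ref{defi:Nh}, which counts the honestly balanced curves. The bridge between the two index sets is the surjection $\pi:\mathfrak{H}_{p,q}^\beta(P)\to\tilde{\mathfrak{H}}_{p,q}^\beta(P)$ of Lemma~\ref{lem:tilde}. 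First I would reorganise the defining sum for $N_{p,q}^{\textup{trop}}(X,\beta)$ over the fibres of $\pi$,
\[ N_{p,q}^{\textup{trop}}(X,\beta)=\sum_{\tilde h\in\tilde{\mathfrak{H}}_{p,q}^\beta(P)}\ \sum_{h\in\pi^{-1}(\tilde h)}N_h, \]
and then identify the inner sum with the summand attached to $\tilde h$ in Theorem~\ref{thm:deg}, up to one universal factor of $p$.

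The heart of the argument is a local comparison at each vertex of a fixed $\tilde h$, using the explicit description of the fibre $\pi^{-1}(\tilde h)$ coming from the proof of Lemma~\ref{lem:tilde}: a balanced curve $h$ lying over $\tilde h$ is recovered by reversing the construction, namely by replacing every type~(II) vertex together with its adjacent edge $E$ by a bounded leg of weight $w_E$ in the direction $m_{v,-,i}$, and by reinstating at every type~(III) vertex bounded legs in the red-stub directions $m_{v,+,i}$ whose weights form a vector $\textbf{w}_{V,+}$ with $\sum_j w_j=k_i$. I would then match the contributions type by type. At a type~(II) vertex the product $w_E\cdot N_V$ equals $(-1)^{w_E-1}/w_E$ by Proposition~\ref{prop:N}(II), which is exactly what the corresponding bounded leg contributes to $N_h$ through the factor $\prod_{E\in L_\Delta(\Gamma)}(-1)^{w_E-1}/w_E$ of Definition~\ref{defi:Nh}. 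At a type~(III) vertex the invariant $N_V$ of Proposition~\ref{prop:N}(III) is itself a sum over the weight vectors $\textbf{w}_{V,+}$; expanding it turns the single $\tilde h$-term into the sum over the possible reinstated legs, each term supplying the toric invariant $N_V^{\textup{tor}}=N_{\textbf{m}}(\textbf{w})$ of Definition~\ref{defi:Ntor} together with the leg factors $(-1)^{w_j-1}/w_j$ occurring in $N_h$. Type~(I) vertices are left untouched and there $N_V=N_V^{\textup{tor}}$.

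It then remains to account for the automorphism factors and for the factor $p$. The symmetry weights $1/|\textup{Aut}(\textbf{w}_{V,+})|$ built into Proposition~\ref{prop:N}(III) are precisely what is needed to convert the global factor $1/|\textup{Aut}(\tilde h)|$ into $\sum_{h\in\pi^{-1}(\tilde h)}1/|\textup{Aut}(h)|$: reinstated legs of equal weight that are permuted by a symmetry of $\tilde h$ yield isomorphic curves $h$, and tracking how symmetries of $\tilde h$ lift to permutations of the reinstated legs reassembles the automorphism groups correctly. The factor $p$ then comes from the edge-weight products alone. The defining feature of $\tilde{\mathfrak{H}}_{p,q}(P)$ is the extra bivalent vertex at $P$ on the weight-$p$ leg; it is of type~(I) with $N_V=1$, but it splits that leg into an unbounded leg and a \emph{bounded} edge of weight $p$, so this edge contributes a factor $p$ to $\prod_{E\in E(\tilde\Gamma)}w_E$ that is absent from $\prod_{E\in E(\Gamma)}w_E$, since in $\mathfrak{H}_{p,q}(P)$ the point $P$ merely lies in the interior of the weight-$p$ leg and creates no bounded edge. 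Collecting these identifications shows that $\sum_{h\in\pi^{-1}(\tilde h)}N_h$ equals $p^{-1}$ times the $\tilde h$-summand of Theorem~\ref{thm:deg}, and summing over $\tilde h$ gives the claimed proportionality by the factor $p$ between $N_{p,q}(X,\beta)$ and $N_{p,q}^{\textup{trop}}(X,\beta)$.

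I expect the main obstacle to be the careful bookkeeping of automorphisms in the type~(III) expansion: one must ensure that the symmetry factor $|\textup{Aut}(\textbf{w}_{V,+})|$ of the weight vectors, the automorphisms of the reinstated legs, and the global automorphism groups of $h$ and $\tilde h$ combine without any residual over- or under-counting. This is the place where the argument of \cite{Gra} genuinely has to be adapted, since the extra marked point and the point condition at $P$ change which leg is distinguished and hence how the orientation of $\tilde\Gamma$ and the automorphisms interact.
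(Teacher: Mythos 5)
Your overall strategy is exactly the one the paper intends (its proof of Theorem~\ref{thm:degmax} is literally ``a simple rearrangement of equations as in \cite{Gra}, Theorem 4.17''): regroup $N_{p,q}^{\textup{trop}}=\sum_h N_h$ over the fibres of the surjection of Lemma~\ref{lem:tilde}, cancel each type~(II) vertex against its adjacent edge via $w_E\cdot\tfrac{(-1)^{w_E-1}}{w_E^2}=\tfrac{(-1)^{w_E-1}}{w_E}$, expand each type~(III) invariant of Proposition~\ref{prop:N} as a sum over the reinstated leg-weight partitions, and let the factors $1/|\textup{Aut}(\textbf{w}_{V,+})|$ mediate between $\textup{Aut}(h)$ and $\textup{Aut}(\tilde h)$. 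That part of the bookkeeping is right and is the intended argument.

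The gap is in the factor of $p$, and it is not a slip you can repair by flipping a symbol at the end: your accounting proves the reciprocal of the theorem. From ``$\sum_{h\in\pi^{-1}(\tilde h)}N_h=p^{-1}\cdot(\tilde h\text{-summand of Theorem }\ref{thm:deg})$'' one gets, summing over $\tilde h$, that $N^{\textup{trop}}_{p,q}=p^{-1}N_{p,q}$, i.e.\ $N_{p,q}=p\,N^{\textup{trop}}_{p,q}$, whereas the theorem asserts $N^{\textup{trop}}_{p,q}=p\,N_{p,q}$. The stated direction is the correct one: in the paper's own computation for $(\mathbb{P}^2,2L)$ one has $N^{\textup{trop}}_{5,1}=5$ but $N_{5,1}=1$; more elementarily, $N_{2,1}(\mathbb{P}^2,L)=1$ (the tangent line of $E$ at the fixed point) while the corresponding broken-line count is $2$, and $N_{1,2}=4$ (Riemann--Hurwitz for the projection from a point of $E$), all consistent with $N^{\textup{trop}}=pN$ and inconsistent with your relation. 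The error is forced by the two premises you combine: taking $N_{V}=1$ for the bivalent vertex over $P$ \emph{and} counting its adjacent bounded edge of weight $p$ in $\prod_{E\in E(\tilde\Gamma)}w_E$ makes the $\tilde h$-summand equal $p\cdot\sum_{h}N_h$, which is exactly the inverted statement. For the theorem to hold, the bivalent vertex together with its weight-$p$ edge must contribute a net $1/p$, not $p$: the moduli space over that vertex consists of degree-$p$ covers of a fibre, totally ramified at both ends, subject to the point condition at the order-$p$ contact point; this is a $B\mu_p$-type stack (automorphisms $z\mapsto\zeta z$ of $z\mapsto z^p$), and its contribution, together with the order-$p$ node gluing, is $1/p^2$ rather than $1$. (You were arguably misled by the paper itself: the preview formula $N_{p,q}=p\sum_h\textup{Mult}(h)$ at the start of \S4 and the final sentence of Proposition~\ref{prop:N} carry the same inversion and are incompatible with Theorem~\ref{thm:degmax}, Theorem~\ref{thm:main} and the examples of \S\ref{S:calc}.) So the genuinely non-formal input --- the local multiple-cover/point-condition computation at the marked vertex, which is what puts the factor $p$ on the tropical side --- is missing, and as written your argument would establish a false identity.
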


\begin{proof}
This is a simple rearrangement of equations as in \cite{Gra}, Theorem 4.17.
\end{proof}

\subsection{Broken line calculations}								

\begin{defi}
Let $\mathfrak{p}\in\mathscr{S}_\infty$ be a wall and choose $x\in\textup{Int}(\mathfrak{p})$. Define $\mathfrak{H}_{\mathfrak{p},w}^\circ$ to be the set of all tropical disks $h^\circ : \Gamma \rightarrow B$ with $h^\circ(V_\infty)=x$ and $u_{(V_\infty,E_\infty)}=-w\cdot m_{\mathfrak{p}}$. Note that the sets $\mathfrak{H}_{\mathfrak{p},w}$ are in bijection for different choices of $x\in\textup{Int}(\mathfrak{p})$. For $h^\circ\in \mathfrak{H}_{\mathfrak{p},w}^\circ$ define $\textup{Mult}(h^\circ)$ as in Definition \ref{defi:Nh}.
\end{defi}

\begin{lem}[\cite{Gra}, Proposition 5.20]
\label{lem:scattering}
For a wall $\mathfrak{p}$ of $\mathscr{S}_\infty$ we have
\[ \textup{log }f_{\mathfrak{p}} = \sum_{w=1}^\infty\sum_{h^\circ\in\mathfrak{H}_{\mathfrak{p},w}^\circ} w\textup{Mult}(h^\circ) z^{(wm_{\mathfrak{p}},0)}. \]
\end{lem}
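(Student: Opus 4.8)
The plan is to prove the identity by induction on the $t$-order $k$, tracking how both sides evolve through the scattering process that produces $\mathscr{S}_\infty$ from the initial wall structure $\mathscr{S}_0$. The guiding principle is that each wall of $\mathscr{S}_\infty$ records its entire scattering history as a binary tree of pairwise interactions, and this tree is precisely the combinatorial data of a tropical disk $h^\circ\in\mathfrak{H}^\circ_{\mathfrak{p},w}$ whose univalent vertex $V_\infty$ lies on $\mathfrak{p}$. Passing to $\textup{log }f_{\mathfrak{p}}$ linearizes the multiplicative structure of the wall-crossing automorphisms $\theta_{\mathfrak{p}}$, placing the scattering computation in the Lie algebra of the tropical vertex group; this is what makes the count additive over disks and produces a clean generating function.

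For the base case, the walls of $\mathscr{S}_0$ carry functions $1+z^{(\widebar{m},0)}$ emanating from the affine singularities, with $\widebar{m}=m_{\mathfrak{p}}$ primitive. Expanding $\textup{log}(1+z^{(\widebar{m},0)})=\sum_{w\geq 1}\tfrac{(-1)^{w-1}}{w}z^{(w\widebar{m},0)}$ reproduces exactly the contribution of the trivial tropical disks consisting of a single bounded leg of weight $w$ ending in the affine singularity, with $\textup{Mult}(h^\circ)=m_E=\tfrac{(-1)^{w+1}}{w^2}$ as in Definition \ref{defi:mult}. Since $w\cdot\tfrac{(-1)^{w+1}}{w^2}=\tfrac{(-1)^{w-1}}{w}$, the asserted formula holds before any scattering occurs, and this already explains the overall factor $w$: it converts the disk multiplicity into the coefficient of $\textup{log }f_{\mathfrak{p}}$ because the recorded monomial $z^{(wm_{\mathfrak{p}},0)}$ uses the primitive generator $m_{\mathfrak{p}}$ rescaled by the total weight $w$ of the outgoing edge $E_\infty$.

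For the inductive step I would analyze a single joint where two families of walls $\mathfrak{p}_1,\mathfrak{p}_2$ with directions $\widebar{m}_1,\widebar{m}_2$ meet and produce a new wall $\mathfrak{p}$ in direction $\widebar{m}=w_1\widebar{m}_1+w_2\widebar{m}_2$. The consistency requirement that the clockwise product of the $\theta$'s be the identity, read off via the Baker--Campbell--Hausdorff formula in the tropical vertex Lie algebra, forces the leading new term of $\textup{log }f_{\mathfrak{p}}$ to be the Lie bracket of the incoming contributions; the structure constant of this bracket is the index $\lvert\widebar{m}_1\wedge\widebar{m}_2\rvert=m_V$, matching the trivalent-vertex multiplicity attached to the new internal vertex. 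Tropically this is exactly the gluing of the disks recorded by $\mathfrak{p}_1$ and $\mathfrak{p}_2$ at a common new vertex $V$, their outgoing edges becoming the two incoming edges at $V$ and the new wall direction becoming the outgoing edge. Iterating over the tree, the product of all local structure constants equals $\tfrac{1}{\lvert\textup{Aut}(h^\circ)\rvert}\prod_V m_V\cdot\prod_{E\in L_\Delta}m_E=\textup{Mult}(h^\circ)$, and by Proposition 2.7 of \cite{GPS} this product is independent of the generic perturbation of the scattering diagram, so it is a well-defined disk invariant.

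The main obstacle will be the bookkeeping of multiplicities: after perturbing the initial walls into general position so that all joints are simple and the BCH analysis applies term by term, one must show that every order-$k$ wall is produced by a tree of pairwise scatterings that is unique up to the symmetry measured by $\lvert\textup{Aut}(h^\circ)\rvert$, and that no tropical disk is missed or overcounted. Controlling this requires the perturbation-independence of the multiplicity product (\cite{GPS}, Proposition 2.7) together with the genus-zero (tree) structure of $\Gamma$, and a careful argument that the higher Baker--Campbell--Hausdorff corrections at each joint are absorbed into disks of strictly higher order, so that the induction closes. The subtlety specific to the smooth-divisor setting --- that $B$ is a genuine affine manifold with singularities rather than a cone, and that bounded legs terminate in the affine singularities --- is localized entirely at those singular points and is handled by the base-case analysis above.
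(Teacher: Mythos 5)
The paper itself contains no proof of this lemma: it is imported verbatim from \cite{Gra}, Proposition 5.20, so the only meaningful comparison is with the argument given there, which in turn is the scattering analysis of \cite{GPS} (Theorems 1.4 and 2.8). Your sketch reconstructs essentially that argument --- perturb the diagram so all joints are simple, induct on $t$-order, and read off the new wall at each joint from the leading commutator in the tropical vertex Lie algebra --- so you are on the same route as the cited source, not a different one. Your two key computations are right: the base case identity $w\cdot\frac{(-1)^{w+1}}{w^2}=\frac{(-1)^{w-1}}{w}$ does match $\log(1+z^{(\widebar{m},0)})$ against single-leg disks, and the bracket at a simple joint does produce the vertex multiplicity times the factor $w$. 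One bookkeeping point you should state more carefully: the structure constant at a joint is $m_V=\lvert w_1\widebar{m}_1\wedge w_2\widebar{m}_2\rvert=w_1w_2\lvert\widebar{m}_1\wedge\widebar{m}_2\rvert$, not $\lvert\widebar{m}_1\wedge\widebar{m}_2\rvert$; the weights $w_1,w_2$ are supplied by the inductive hypothesis (the incoming coefficients are $c_i=w_i\,\textup{Mult}(h_i^\circ)$, not $\textup{Mult}(h_i^\circ)$), and since $w_1n_1+w_2n_2=w\,n_{\mathfrak{p}}$ the bracket of $c_1z^{(w_1\widebar{m}_1,0)}\partial_{n_1}$ and $c_2z^{(w_2\widebar{m}_2,0)}\partial_{n_2}$ comes out as $w\cdot m_V\cdot\textup{Mult}(h_1^\circ)\textup{Mult}(h_2^\circ)\,z^{(w\widebar{m}_{\mathfrak{p}},0)}\partial_{n_{\mathfrak{p}}}$ --- this is exactly how the overall factor $w$ propagates through the induction, and it is worth displaying. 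The part you defer as ``the main obstacle'' (every wall arises from a tree unique up to $\textup{Aut}$, no disk missed or doubled, higher BCH terms pushed to higher order) is not a side issue but is precisely the content of \cite{GPS}, Theorem 2.8, together with Proposition 2.7 there for independence of the perturbation; and the smooth-divisor subtlety is handled in \cite{Gra} by the observation (quoted in this paper as Lemma 5.14 of \cite{Gra}) that walls never enter the bounded maximal cell, so all scattering takes place in charts where the affine structure is standard and the \cite{GPS} machinery applies. With those two references doing the heavy lifting your outline closes, and it closes along the same path as the original.
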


\begin{prop}[\cite{CPS}, Proposition 6.15]
\label{prop:broken}
\[ a_{\mathfrak{b}} = \sum_{h\in\mu^{-1}(\mathfrak{b})} \textup{Mult}(h). \]
\end{prop}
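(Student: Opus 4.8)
The plan is to read $a_{\mathfrak{b}}$ off from the wall-crossing automorphisms in Definition \ref{defi:broken} and to match it, break point by break point, with the disk completions of Proposition \ref{prop:bhcirc}. Along $\mathfrak{b}$ one has $a_1=1$, and at the $i$-th break, where $\mathfrak{b}$ crosses a wall $\mathfrak{p}_i$ with incoming monomial $a_iz^{m_i}$, the outgoing monomial is the term of $\theta_{\mathfrak{p}_i}(a_iz^{m_i})=a_if_{\mathfrak{p}_i}^{-\langle n_{\mathfrak{p}_i},\widebar{m}_i\rangle}z^{m_i}$ whose exponent differs from $m_i$ by $(k_im_{\mathfrak{p}_i},0)$ for a fixed $k_i\geq 1$. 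Hence
\[ a_{\mathfrak{b}}=\prod_i c_i,\qquad c_i:=\big[z^{(k_im_{\mathfrak{p}_i},0)}\big]\,f_{\mathfrak{p}_i}^{-\langle n_{\mathfrak{p}_i},\widebar{m}_i\rangle}, \]
the product running over the break points of $\mathfrak{b}$. First I would reduce the claim to a local identity at a single wall: that $c_i$ equals the sum of products of multiplicities over all admissible families of tropical disks attached at that break.

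Second, by the description of $\mu^{-1}(\mathfrak{b})$ in Proposition \ref{prop:bhcirc}, a disk completion $h$ is obtained by choosing, independently at each break point, a finite family of Maslov-index-zero disks mapping onto $\mathfrak{p}_i$ and its ancestors whose weights partition the multiple $k_i$; these are exactly the disks enumerated by $\mathfrak{H}^\circ_{\mathfrak{p}_i,w}$. Because $\Gamma$ is a tree, $\textup{Mult}(h)$ from Definition \ref{defi:mult} factors as a product of local contributions, one per break point, each equal to the determinant of the spine vertex where the family attaches, times the multiplicities of the attached disks, divided by the automorphisms permuting equal disks. Hence $\sum_{h\in\mu^{-1}(\mathfrak{b})}\textup{Mult}(h)$ factors as a product over break points as well, and it suffices to establish the local identity.

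For the local identity I would substitute Lemma \ref{lem:scattering},
\[ \log f_{\mathfrak{p}} = \sum_{w\geq 1}\sum_{h^\circ\in\mathfrak{H}^\circ_{\mathfrak{p},w}} w\,\textup{Mult}(h^\circ)\,z^{(wm_{\mathfrak{p}},0)}, \]
write $\ell=-\langle n_{\mathfrak{p}},\widebar{m}\rangle$ and expand $f_{\mathfrak{p}}^{\ell}=\exp(\ell\log f_{\mathfrak{p}})$. Extracting the coefficient of $z^{(km_{\mathfrak{p}},0)}$ yields a sum over multisets of disks of total weight $k$ with weight
\[ \frac{1}{\prod_\alpha n_\alpha!}\,\ell^{\,s}\prod_\alpha\big(w_\alpha\,\textup{Mult}(h^\circ_\alpha)\big)^{n_\alpha},\qquad s=\sum_\alpha n_\alpha. \]
On the tropical side, attaching such a multiset at the break point creates $s$ trivalent spine vertices, and by the deformation-invariance of Definition \ref{defi:mult} (using $m_{\mathfrak{p}}\wedge m_{\mathfrak{p}}=0$) each has determinant $|\widebar{m}\wedge w_\alpha m_{\mathfrak{p}}|=w_\alpha|\ell|$; the product over them reproduces $\ell^{\,s}\prod_\alpha w_\alpha^{n_\alpha}$, matching simultaneously the power $\ell$ coming from the automorphism $\theta_{\mathfrak{p}}$ and the weights $w$ of Lemma \ref{lem:scattering}. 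The internal factor of each attached disk is exactly $\textup{Mult}(h^\circ_\alpha)$, and permuting equal disks contributes $1/\prod_\alpha n_\alpha!$, so $c_i$ and the local sum agree term by term.

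The main obstacle is precisely this symmetry-and-weight bookkeeping. One must verify that the exponent $-\langle n_{\mathfrak{p}},\widebar{m}\rangle$ prescribed by $\theta_{\mathfrak{p}}$ is reproduced on the nose by the product of spine-vertex determinants, that the single factor $w$ of Lemma \ref{lem:scattering} is absorbed into those same determinants rather than double-counted, and that the automorphism group of the completed disk splits as the internal automorphisms of the attached disks times the multinomial symmetry $\prod_\alpha n_\alpha!$ of the exponential formula. The recursive scattering history of $\mathfrak{p}$ requires no separate induction, since it is already packaged inside the disks $\mathfrak{H}^\circ_{\mathfrak{p},w}$; the only induction is the trivial product over the finitely many break points of $\mathfrak{b}$. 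The last delicate point is matching signs and orientations, the sign of $\ell$ against the positivity of the disk count, which is resolved by the conventions fixing $n_{\mathfrak{p}}$ and the orientation of $\mathfrak{b}$.
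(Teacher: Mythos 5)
Your proposal is correct and takes essentially the same route as the paper's proof: both telescope $a_{\mathfrak{b}}$ into a product of wall-crossing coefficients over break points, compute each factor by exponentiating the generating function of Lemma \ref{lem:scattering}, and identify the resulting prefactors (the power of $\langle n_{\mathfrak{p}},\widebar{m}\rangle$ times the weights $w_\alpha$) with the multiplicity $m_{V_\infty}$ of the break-point vertex via a trivalent deformation. Your bookkeeping of the multinomial factors $\prod_\alpha n_\alpha!$ against automorphisms of the completed disk is in fact more explicit than the paper's appeal to ``standard combinatorial arguments''.
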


\begin{proof}
Consider a break point $P'$ of $\mathfrak{b}$ and let $az^m, a'z^{m'}$ be the monomials attached to the line segments before and after the breaking, respectively. To complete $\mathfrak{b}$ (Proposition \ref{prop:bhcirc}) at $P'$ we have to add one or more tropical disks ending in $P'$ such that the sum of their ending weights $u_{(V_\infty,E_\infty)}$ equals $\widebar{m}+\widebar{m}'$. Let $\mathfrak{H}_{\mathfrak{b}}(P')$ be the set of such collections and for $(h_1^\circ,\ldots,h_s^\circ)\in\mathfrak{H}_{\mathfrak{b}}(P')$ let $m_{V_\infty}$ be the multiplicity of the vertex mapping to $P'$ for any completion $h\in\mu^{-1}(\mathfrak{b})$ involving $(h_1^\circ,\ldots,h_s^\circ)$. This is well-defined, since $m_{V_\infty}$ depends only on $(h_1^\circ,\ldots,h_s^\circ)$ and not on tropical disks we add to the other break points of $\mathfrak{b}$.

\underline{Claim}: 
\[ \frac{a'}{a} = \sum_{(h_1^\circ,\ldots,h_s^\circ)\in\mathfrak{H}_{\mathfrak{b}}(P')} m_{V_\infty}\prod_{i=1}^s \text{Mult}(h_i^\circ). \]
Let $\mathfrak{p}$ be the wall containing the break point $P'$ and define $a_{\mathfrak{p}}=|\widebar{m}\wedge\widebar{m}'|\text{log }f_{\mathfrak{p}}$. By definition $\frac{a'}{a}$ is the coefficient of $z^{m-m'}$ in $\text{exp}(a_{\mathfrak{p}})$. By Lemma \ref{lem:scattering}, $a_{\mathfrak{p}}$ is the generating function of $|\widebar{m}\wedge\widebar{m}'|w_{E_\infty}\textup{Mult}(h^\circ)$ for tropical disks ending in $P'$. Hence, by standard combinatorial arguments, $\text{exp}(a_{\mathfrak{p}})$ is a sum over collections of such tropical disks. Hence, the coefficient of $z^{m-m'}$ is given by
\[ \frac{a'}{a} = \sum_{(h_1^\circ,\ldots,h_s^\circ)\in\mathfrak{H}_{\mathfrak{b}}(P')}\prod_{i=1}^s |\widebar{m}\wedge\widebar{m}'|w_{E_\infty}(h_s^\circ) \cdot \textup{Mult}(h_s^\circ). \]
But $|\widebar{m}\wedge\widebar{m}'|\prod_{i=1}^s w_{E_\infty}(h_s^\circ)$ is nothing but $m_{V_\infty}$. This proves the claim.

Now let $a_1z^{m_1},\ldots,a_rz^{m_r}$ be all the monomials attached to line segments of $\mathfrak{b}$ and for $i=1,\ldots,r-1$ let $P_i$ be the break point between $a_iz^{m_i}$ and $a_{i+1}z^{m_{i+1}}$. We can expand
\[ a_{\mathfrak{b}} := a_r = \frac{a_r}{a_{r-1}}\frac{a_{r-1}}{a_{r-2}}\cdots\frac{a_2}{a_1}a_1. \]
By definition $a_1=1$ and for each fraction we can use the claim to obtain
\[ a_{\mathfrak{b}} = \prod_{i=1}^{r-1} \sum_{(h_1^\circ,\ldots,h_s^\circ)\in\mathfrak{H}_{\mathfrak{b}}(P_i)} m_{V_\infty}\prod_{i=1}^s\text{Mult}(h_i^\circ). \]
The first product and summation can be replaced by a summation over all possible combinations of collections $(h_1^\circ,\ldots,h_s^\circ)$ for all $P_i$. But this is nothing but a choice of completion $h\in\mu^{-1}(\mathfrak{b})$. Moreover, the product of the $m_{V_\infty}\prod_{i=1}^s\text{Mult}(h_i^\circ)$ is nothing but $\text{Mult}(h)$. Hence, we get the formula claimed in the proposition.
\end{proof}

\section{Theta functions}										
\label{S:theta}

\begin{defi}
\label{defi:theta}
For a point $P\in B_0$ and an asymptotic direction $m$ define the corresponding \textit{theta function} by
\[ \vartheta_m(P) = \sum_{\mathfrak{b}\in\mathfrak{B}_m(P)} a_{\mathfrak{b}} z^{m_{\mathfrak{b}}} \]
\end{defi}

In our case, with smooth divisor $D$, the dual intersection complex $B$ has exactly one unbounded direction $m_{\textup{out}}$, so asymptotic directions on $B$ are just multiples of $m_{\textup{out}}$. We write $\vartheta_q(P)$ for $\vartheta_{q\cdot m_{\textup{out}}}(P)$ with $d\in \mathbb{N}$.

\begin{prop}[\cite{GHS}, Theorem 3.24, \cite{GSintrinsic}, Theorem 1.9]
Theta functions generate a commutative ring (associative if $X$ is Fano) with unit $\vartheta_0$ by the multiplication rule
\[ \vartheta_p(P) \cdot \vartheta_q(P) = \sum_{r=0}^\infty \alpha_{p,q}^r(P) \vartheta_r(P) \]
with structure constants
\[ \alpha_{p,q}^r(P) = \sum_{\substack{(\mathfrak{b}_1,\mathfrak{b}_2)\in\mathfrak{B}_p(P)\times\mathfrak{B}_q(P) \\ m_{\mathfrak{b}_1}+m_{\mathfrak{b}_2}= \ r}} a_{\mathfrak{b}_1}a_{\mathfrak{b}_2} \]
\end{prop}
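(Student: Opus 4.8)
The plan is to regard the product $\vartheta_p(P)\cdot\vartheta_q(P)$ as an honest element of the completed structure ring $R_\varphi$, expand it, and exhibit it as a combination of theta functions by matching leading terms. First I would multiply out the two defining sums from Definition \ref{defi:theta},
\[ \vartheta_p(P)\cdot\vartheta_q(P)=\sum_{(\mathfrak{b}_1,\mathfrak{b}_2)\in\mathfrak{B}_p(P)\times\mathfrak{B}_q(P)}a_{\mathfrak{b}_1}a_{\mathfrak{b}_2}\,z^{m_{\mathfrak{b}_1}+m_{\mathfrak{b}_2}}, \]
and organize the resulting monomials according to their combined exponent $m=m_{\mathfrak{b}_1}+m_{\mathfrak{b}_2}$. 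At this stage commutativity is already visible from the symmetry of the double sum, and the unit axiom holds because $\vartheta_0=1$ by convention.

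The core of the argument is that the theta functions form a topological basis adapted to this grouping. Each $\vartheta_r(P)$ carries one distinguished \emph{unbent} broken line — the one taking the trivial wall-crossing term at every wall it meets — whose contribution is the monomial $x^{-r}=z^{(r\,m_{\textup{out}},r)}$ in the notation of Theorem \ref{thm:main}; this is the unique monomial of $t$-order zero in $\vartheta_r(P)$, since every genuine breaking strictly raises the $t$-order. Working with $P$ in an unbounded chamber, by Propositions \ref{prop:indep} and \ref{prop:parallel} all broken lines end parallel to $m_{\textup{out}}$, the bent ones with strictly negative $m_{\textup{out}}$-component by Proposition \ref{prop:p+q}; hence the asymptotic $m_{\textup{out}}$-component $r\,m_{\textup{out}}$ of a monomial records exactly which theta function it belongs to, and distinct leading exponents make the $\vartheta_r(P)$ linearly independent. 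I would then decompose the product by descending asymptotic component: its top piece is $x^{-p}\cdot x^{-q}=x^{-(p+q)}$, the leading term of $\vartheta_{p+q}(P)$, forcing $\alpha_{p,q}^{p+q}(P)=1$; subtracting $\vartheta_{p+q}(P)$ leaves a remainder of strictly smaller asymptotic support, and iterating extracts $\alpha_{p,q}^r(P)$ by collecting precisely the pairs $(\mathfrak{b}_1,\mathfrak{b}_2)$ whose combined exponent has asymptotic part $r\,m_{\textup{out}}$, which is the condition $m_{\mathfrak{b}_1}+m_{\mathfrak{b}_2}=r$ of the statement.

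I expect the main obstacle to be verifying that this peeling genuinely terminates inside the theta module — that \emph{every} monomial $z^{m_{\mathfrak{b}_1}+m_{\mathfrak{b}_2}}$ produced by the product is accounted for, with the correct coefficient, as the leading or a bent term of some $\vartheta_r(P)$. This closure is exactly where the global consistency of the wall structure $\mathscr{S}_\infty$ enters: consistency is what makes broken-line counts, and hence theta functions, well-defined independently of chambers and chosen paths, and it is the substance of \cite{GHS}, Theorem 3.24; the case of $P$ not near infinity is covered by the same coefficient-extraction, taking the endpoint towards the integral point $r$. Finally I would record the remaining ring axioms: commutativity and the unit as above, and associativity in the Fano case by comparing $(\vartheta_{m_1}\vartheta_{m_2})\vartheta_{m_3}$ with $\vartheta_{m_1}(\vartheta_{m_2}\vartheta_{m_3})$ as the same symmetric count over triples of broken lines, the Fano hypothesis guaranteeing the finiteness needed for the two groupings to agree. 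As an alternative route I would invoke \cite{GSintrinsic}, Theorem 1.9, where these same structure constants arise as punctured Gromov--Witten invariants, a description in which associativity and independence of $P$ are automatic.
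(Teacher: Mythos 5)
The first thing to compare against: the paper has no proof of this proposition. The bracket in its statement is a citation, not an attribution of a method --- the result is imported wholesale from \cite{GHS}, Theorem 3.24, and \cite{GSintrinsic}, Theorem 1.9, and is then \emph{used} (together with Theorem \ref{thm:degmax} and Proposition \ref{prop:broken}) to prove Theorem \ref{thm:theta}. So the paper's ``own proof'' is the citation itself, and your closing alternative --- just invoking \cite{GSintrinsic}, Theorem 1.9 --- is in fact exactly what the paper does.

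Judged as a standalone argument, your proposal has a genuine gap, and it sits at the decisive step. The formal parts are fine: expanding the double sum, commutativity by symmetry, the unit $\vartheta_0=1$, and, for $P$ in an unbounded chamber, the triangularity coming from $\vartheta_r=x^{-r}+\sum_{p\geq 1}pN_{p,r}t^{p+r}x^p$, which makes coefficient extraction unambiguous. But peeling by leading terms only proves \emph{uniqueness} of a candidate expansion; it does not prove \emph{existence}, i.e.\ that the product lies in the completed span of the theta functions at all. Concretely, pairs of bent broken lines contribute monomials in positive powers of $x$ on the left, and these must be matched by the bent terms of the $\alpha_{p,q}^r\vartheta_r$ on the right; this is a nontrivial identity (in this paper it is equivalent to the relations among the $N_{p,q}$ of Theorem \ref{thm:main2}, which are deduced \emph{from} the proposition, not the other way around). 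You acknowledge this and close the hole by appealing to consistency as ``the substance of \cite{GHS}, Theorem 3.24'' --- but that is the statement under proof, so the argument is circular exactly where it matters. The treatment of associativity has the same defect: ``the same symmetric count over triples'' with Fano finiteness is not a proof; associativity is the identity $\sum_s \alpha_{p,q}^s\alpha_{s,r}^u=\sum_s \alpha_{q,r}^s\alpha_{p,s}^u$ between iterated structure constants, and in \cite{GHS} it rests on deformation-of-endpoint arguments using consistency of $\mathscr{S}_\infty$, not on finiteness. Finally, the proposition is asserted for every $P\in B_0$ with $P$-dependent structure constants, while your mechanism operates only in unbounded chambers; the remark that general $P$ is ``covered by the same coefficient-extraction'' is not substantiated. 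If the goal is to match the paper, the honest write-up is the citation alone; if the goal is a genuine proof, the closure and associativity steps are precisely what would have to be supplied.
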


\begin{thm}[Theorems \ref{thm:main} and \ref{thm:main2}]
\label{thm:theta}
Write $x=z^{(-m_{\textup{out}},-1)}$ and $t=z^{(0,1)}$. Then
\begin{equation}
\label{eq:1}
\vartheta_q = x^{-q} + \sum_{p=1}^\infty pN_{p,q}t^{p+q}x^p
\end{equation}
Moreover, $\alpha_{p,q}^{r}=1$ if $r=p+q$ and otherwise
\begin{equation}
\label{eq:2}
\alpha_{p,q}^r = ((p-r)N_{p-r,q} + (q-r)N_{q-r,p})t^{p+q-r},
\end{equation}
where we define $N_{p,q}=0$ whenever $p\leq 0$.
\end{thm}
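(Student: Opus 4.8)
The plan is to establish \eqref{eq:1} by a direct broken-line computation and then to deduce \eqref{eq:2} algebraically, by substituting \eqref{eq:1} into the multiplication rule and comparing expansions in $x$.

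For \eqref{eq:1} I would fix $P$ in an unbounded chamber of $\mathscr{S}_\infty$, far from the bounded maximal cell, and expand $\vartheta_q=\sum_{\mathfrak{b}\in\mathfrak{B}_q(P)}a_{\mathfrak{b}}z^{m_{\mathfrak{b}}}$ using Definition \ref{defi:theta}. There is a single broken line that never breaks; it has $a_{\mathfrak{b}}=1$ and $m_{\mathfrak{b}}=m_1=(qm_{\textup{out}},q)$, hence contributes the leading term $z^{(qm_{\textup{out}},q)}=x^{-q}$. Every remaining broken line breaks, so by Proposition \ref{prop:parallel} its ending direction is parallel to $m_{\textup{out}}$, and by Proposition \ref{prop:p+q} it lies in some $\mathfrak{B}_{p,q}(P)$ with $m_{\mathfrak{b}}=(-pm_{\textup{out}},q)$, i.e. $z^{m_{\mathfrak{b}}}=t^{p+q}x^p$. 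Grouping by $p$ via Corollary \ref{cor:broken} gives
\[ \vartheta_q = x^{-q} + \sum_{p=1}^{\infty}\Big(\sum_{\mathfrak{b}\in\mathfrak{B}_{p,q}(P)}a_{\mathfrak{b}}\Big)t^{p+q}x^p, \]
and it remains to evaluate the inner sum. Here I would chain together Proposition \ref{prop:broken} ($a_{\mathfrak{b}}=\sum_{h\in\mu^{-1}(\mathfrak{b})}\textup{Mult}(h)$), the surjection with finite fibres of Corollary \ref{cor:bh}, and $N_h=\textup{Mult}(h)$ with Definition \ref{defi:Nh}, to get $\sum_{\mathfrak{b}\in\mathfrak{B}_{p,q}(P)}a_{\mathfrak{b}}=\sum_{h\in\mathfrak{H}_{p,q}(P)}\textup{Mult}(h)=\sum_\beta N_{p,q}^{\textup{trop}}(X,\beta)$; by Theorem \ref{thm:degmax} this equals $pN_{p,q}$, yielding \eqref{eq:1}.

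For \eqref{eq:2} I would substitute \eqref{eq:1} into the multiplication rule $\vartheta_p\cdot\vartheta_q=\sum_{r\geq0}\alpha_{p,q}^r\vartheta_r$ from the preceding Proposition and solve for the structure constants. The structural key is that every monomial of $\vartheta_r$ has the form $z^{(am_{\textup{out}},r)}$, so $\vartheta_r$ is homogeneous of $h$-degree $r$; hence $\vartheta_p\vartheta_q$ has $h$-degree $p+q$, and since each $\alpha_{p,q}^r$ is a series in $t=z^{(0,1)}$ this homogeneity forces $\alpha_{p,q}^r=c_{p,q}^r\,t^{p+q-r}$ and $\alpha_{p,q}^r=0$ for $r>p+q$. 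Because $\vartheta_r=x^{-r}+O(x^{\geq1})$, the family $\{\vartheta_r\}$ is triangular in its leading exponent, so $c_{p,q}^r$ is exactly the coefficient of $x^{-r}$ in the product (and $c_{p,q}^0$ the coefficient of $x^0$). Expanding
\[ \Big(x^{-p}+\sum_{a\geq1}aN_{a,p}t^{a+p}x^a\Big)\Big(x^{-q}+\sum_{b\geq1}bN_{b,q}t^{b+q}x^b\Big) \]
and collecting the coefficient of $x^{-r}$, the term $x^{-p}x^{-q}$ gives $\alpha_{p,q}^{p+q}=1$, while for $0\le r<p+q$ the two cross-terms $b=p-r$ and $a=q-r$ give
\[ \alpha_{p,q}^r = \big((p-r)N_{p-r,q}+(q-r)N_{q-r,p}\big)t^{p+q-r}, \]
with $N_{p-r,q}=0$ when $p-r\le0$ and $N_{q-r,p}=0$ when $q-r\le0$, which is \eqref{eq:2}.

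The main obstacle is the bookkeeping in \eqref{eq:1}: one must correctly account for the weight factor $p$ coming from Theorem \ref{thm:degmax} (the weight of the leg carrying the point condition) and be careful to treat the non-breaking broken line separately, as the leading term $x^{-q}$, rather than folding it into the sum over $\mathfrak{B}_{p,q}(P)$. For \eqref{eq:2} the only point requiring care is the justification that matching principal parts in $x$ pins down the structure constants; this is immediate once one combines the triangularity of $\{\vartheta_r\}$ with the $h$-homogeneity that collapses each $\alpha_{p,q}^r$ to a single power of $t$, and it simultaneously shows the a priori infinite expansion over $r$ is finite.
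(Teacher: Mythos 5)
Your proposal is correct, and for \eqref{eq:1} it coincides with the paper's own argument: the paper's proof is the one-line citation of Theorem \ref{thm:degmax} and Proposition \ref{prop:broken}, and the chain you spell out --- the unbroken line giving $x^{-q}$, Proposition \ref{prop:parallel} and Corollary \ref{cor:broken} placing every breaking line in some $\mathfrak{B}_{p,q}(P)$, then $a_{\mathfrak{b}}=\sum_{h\in\mu^{-1}(\mathfrak{b})}\textup{Mult}(h)$ summed over the finite fibers of $\mu$ from Corollary \ref{cor:bh} and converted by $N_h=\textup{Mult}(h)$ and Theorem \ref{thm:degmax} into $pN_{p,q}$ --- is exactly the outline given at the start of Section 4 (you are in fact more careful than the paper, whose Corollary \ref{cor:broken} silently omits the unbroken line from its decomposition). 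For \eqref{eq:2} your route is a mild variant of the paper's. The paper has available the explicit broken-line formula for the structure constants, $\alpha_{p,q}^r(P)=\sum a_{\mathfrak{b}_1}a_{\mathfrak{b}_2}$ over pairs with $\widebar{m}_{\mathfrak{b}_1}+\widebar{m}_{\mathfrak{b}_2}=rm_{\textup{out}}$, and the intended argument is to evaluate it directly: two breaking lines have total ending direction $-(a+b)m_{\textup{out}}$ with $a+b\geq 2$, so at least one member of each contributing pair is straight, and the identity $\sum_{\mathfrak{b}\in\mathfrak{B}_{a,q}(P)}a_{\mathfrak{b}}=aN_{a,q}$ then yields \eqref{eq:2}. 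You instead treat the $\alpha_{p,q}^r$ as unknowns in the ring identity $\vartheta_p\vartheta_q=\sum_r\alpha_{p,q}^r\vartheta_r$ and pin them down by $h$-homogeneity together with triangularity of the leading exponents $x^{-r}$. The underlying series multiplication is identical --- your vanishing of the cross-terms with $a+b=-r$ is precisely the paper's observation that both lines cannot break --- but your version only uses that an expansion with coefficients in $\mathbb{C}\llbracket t\rrbracket$ exists, not its broken-line description, at the cost of the extra uniqueness argument, which you correctly supply.
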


\begin{proof}
This follows from Theorem \ref{thm:degmax} and Proposition \ref{prop:broken}.
\end{proof}

Plugging both expressions of Theorem \ref{thm:theta} into the multiplication rule we obtain relations among the $N_{p,q}$. Since powers of $\vartheta_1$ generate the theta ring these equations determine all $N_{p,q}$ by only knowing the invariants $N_{p,1}$ or, equivalently, the invariants $N_{1,q}$.

\begin{expl}
We use Theorem \ref{thm:theta} to obtain relations among the $2$-marked invariants $N_{p,q}$ for $\mathbb{P}^2$ up to order $d=(p+q)/3=2$. By \eqref{eq:1} we have 
\begin{eqnarray*}
\vartheta_1&=&x^{-1}+2N_{2,1}t^3x^2+5N_{5,1}t^6x^5+\mathcal{O}(t^9) \\
\vartheta_2&=&x^{-2}+N_{1,2}t^3x+4N_{4,2}t^6x^4+\mathcal{O}(t^9) \\
\vartheta_3&=&x^{-3}+3N_{3,3}t^6x^3+\mathcal{O}(t^9) \\
\vartheta_4&=&x^{-4}+2N_{2,4}t^6x^2+\mathcal{O}(t^9) \\
\vartheta_5&=&x^{-5}+N_{1,5}t^6x+\mathcal{O}(t^9)
\end{eqnarray*}
By direct mutliplication we get
\[ \vartheta_1\cdot\vartheta_1=x^{-2}+4N_{2,1}t^3x+(4N_{2,1}^2+10N_{5,1})t^6x^4+\mathcal{O}(t^9). \]
On the other hand, \eqref{eq:2} gives, with $\alpha_{1,1}^1=0$,
\[ \vartheta_1\cdot\vartheta_1 = \vartheta_2 = x^{-2}+N_{1,2}t^3x+4N_{4,2}t^6x^4 + \mathcal{O}(t^9) \]
Comparing these two equations we get the relations
\[ N_{1,2}=4N_{2,1}, \qquad 2N_{4,2}=2N_{2,1}^2+5N_{5,1} \]
Similarly, comparing
\[ \vartheta_1\cdot\vartheta_2=x^{-3}+(N_{1,2}+2N_{2,1})t^3x^0+(2N_{1,2}N_{2,1}+4N_{4,2}+5N_{5,1})t^6x^3+\mathcal{O}(t^9) \]
with
\[ \vartheta_1\cdot\vartheta_2=\vartheta_3+(N_{1,2}+2N_{2,1})t^3\vartheta_0=x^{-3}+(N_{1,2}+2N_{2,1})x^0+3N_{3,3}x^3+\mathcal{O}(t^9) \]
we get
\[ 3N_{3,3} = 2N_{1,2}N_{2,1}+4N_{4,2}+5N_{5,1}. \]
Comparing
\[ \vartheta_1\cdot \vartheta_3=x^{-4}+2N_{2,1}t^3x^{-1}+(3N_{3,3}+5N_{5,1})t^6x^2+\mathcal{O}(t^9) \]
with
\[ \vartheta_1\cdot \vartheta_3=\vartheta_4+2N_{2,1}t^3\vartheta_1=x^{-4}+2N_{2,1}t^3x^{-1}+(4N_{2,1}^2+2N_{2,4})t^6x^2+\mathcal{O}(t^9) \]
we get the relation
\[ 3N_{3,3}+5N_{5,1}=4N_{2,1}^2+2N_{2,4} \]
and comparing
\[ \vartheta_1\cdot\vartheta_4 = x^{-5}+2N_{2,1}t^3x^{-2}+(2N_{2,4}+5N_{5,1})t^6x+\mathcal{O}(t^9) \]
with
\[ \vartheta_1\cdot\vartheta_4 = \vartheta_5+2N_{2,1}t^3\vartheta_2 = x^{-5}+2N_{2,1}t^3x^{-2}+(N_{1,5}+2N_{1,2}N_{2,1})t^6x+\mathcal{O}(t^9) \]
we get
\[ 2N_{2,4}+5N_{5,1}=N_{1,5}+2N_{1,2}N_{2,1}. \]
Knowing $N_{1,2}=4$ and $N_{1,5}=25$, e.g. by direct computation as in \S\ref{S:calc}, we can solve the above equations and get $N_{2,1}=1$ as well as
\[ N_{2,4}=14,\qquad N_{3,3}=9,\qquad N_{4,2}=\frac{7}{2},\qquad N_{5,1}=1. \]
\end{expl}

\section{Higher genus and $\boldsymbol{q}$-refined invariants}						
\label{S:genus}

For an effective curve class $\underline{\beta}$ of $X$ let $\beta_{p,q}^g$ be the class of stable log maps to $X$ of genus $g$, class $\underline{\beta}$ and two marked points with contact orders $p$ and $q$ with $D$. The moduli space $\mathscr{M}(X,\beta_{p,q}^g)$ of basic stable log maps of class $\beta_{p,q}^g$ has virtual dimension $g+1$. We cut this dimension down to zero by fixing the image of the first marked point and inserting a \textit{lambda class}. Let $\pi : \mathcal{C} \rightarrow \mathscr{M}(X,\beta_{p,q}^g)$ be the universal curve, of relative dualizing sheaf $\omega_\pi$. Then $\mathbb{E}=\pi_\star\omega_\pi$ is a rank $g$ vector bundle over $\mathscr{M}(X,\beta^g)$, called the Hodge bundle. The lambda classes are the Chern classes of the Hodge bundle, $\lambda_j=c_j(\mathbb{E})$. Let $\textup{ev} : \mathscr{M}(X,\beta_{p,q}^g) \rightarrow D$ be the evaluation map at the marked point of order $p$. Define the $2$-marked log Gromov-Witten invariant
\[ N_{p,q}^g(X,\beta) = \int_{\llbracket\mathscr{M}(X,\beta_{p,q}^g)\rrbracket} (-1)^g\lambda_g \textup{ev}^\star[\textup{pt}] \in \mathbb{Q}. \]

\begin{defi}
Let $h : \Gamma \rightarrow B$ be a tropical curve. For a trivalent vertex $V$ with multiplicity $m_V$ (Definition \ref{defi:mult}) define, with $\boldsymbol{q}=e^{i\hbar}$,
\[ m_V(\boldsymbol{q}) = \frac{1}{i\hbar}\left(\boldsymbol{q}^{m_V/2}-\boldsymbol{q}^{-m_V/2}\right). \]
For a vertex with higher valvency define $m_V(\boldsymbol{q}) = \prod_{V'\in V(h'_V)} m_{V'}(\boldsymbol{q})$ with $h'_V$ as in Definition \ref{defi:mult}. For a bounded leg $E$ with weight $w_E$ define
\[ m_E(\boldsymbol{q}) = \frac{(-1)^{w_E}}{w_E}\cdot \frac{i\hbar}{\boldsymbol{q}^{w_E/2}-\boldsymbol{q}^{-w_E/2}}. \]
Then define the \textit{$\boldsymbol{q}$-refined multiplicity} of $h$ to be
\[ m_h(\boldsymbol{q}) = \frac{1}{|\textup{Aut}(h)|} \cdot \prod_{V\in V(\Gamma)} m_V(\boldsymbol{q}) \cdot \prod_{E\in L_\Delta(\Gamma)} m_E(\boldsymbol{q}). \]
\end{defi}

\begin{thm}
Let $P$ be a point in an unbounded chamber of $\mathscr{S}_{p+q}$. Then, with $\boldsymbol{q}=e^{i\hbar}$,
\[ \sum_{g\geq 0} N_{p,q}^g(X,\beta)\hbar^{2g} = \sum_{h\in\mathfrak{H}_{p,q}^\beta(P)} m_h(\boldsymbol{q}) \]
\end{thm}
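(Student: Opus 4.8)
The plan is to run the same degeneration machinery that produced Theorem~\ref{thm:degmax}, now keeping track of the genus and of the insertion $(-1)^g\lambda_g$. As before I would compute the invariant on the central fibre $Y$ of $\tilde{\mathfrak X}_{p,q}\to\mathbb A^1$ and apply the decomposition formula (Proposition~\ref{prop:dec}), so that the connected components of $\mathscr M(Y,\beta_{p,q}^g)$ are indexed by decorated tropical curves. The genuinely new input is the behaviour of $\lambda_g$: since $\lambda_g=c_g(\mathbb E)$ vanishes on the complement of the compact-type locus (over any stable map whose source has a non-separating node the Hodge bundle acquires a trivial quotient, so $c_g=0$), the only tropical types contributing to $\int(-1)^g\lambda_g\,\mathrm{ev}^\star[\mathrm{pt}]$ are those whose dual graph is a tree. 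Hence the sum is again indexed by $\tilde{\mathfrak H}^\beta_{p,q}(P)$, exactly the combinatorial types of the genus-zero proof, but now with the genus distributed over the vertices, $g=\sum_V g_V$ and $b_1(\tilde\Gamma)=0$.

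\textbf{Gluing and factorisation.} On each such tree the gluing argument goes through unchanged: Propositions~\ref{prop:gluing} and~\ref{prop:unique} only use that $\tilde\Gamma$ is a rooted tree, and the unique orientation remains the only gluing of nonnegative virtual dimension. The key structural point I would establish is that, over the compact-type locus, the Hodge bundle splits as $\mathbb E=\bigoplus_V\mathbb E_V$, so that $(-1)^g\lambda_g=\sum_{\sum_V g_V=g}\prod_V(-1)^{g_V}\lambda^{(V)}_{g_V}$. Feeding this through $\mathrm{cut}^\star\delta^!$ makes the full genus generating series factor as a product over vertices of local generating series, each weighted at the vertex $V$ by $(-1)^{g_V}\lambda_{g_V}$ and summed against $\hbar^{2g_V}$. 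It then suffices to identify these local refined contributions vertex by vertex and edge by edge.

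\textbf{Local identification.} This is the heart of the argument. For vertices of type~(I) and~(III), and the bivalent vertex at $P$, the genus-refined toric vertex contributions are computed by Bousseau's higher-genus refined tropical correspondence for toric surfaces: the local series equals the Block--G\"ottsche factor $m_V(\boldsymbol q)=\tfrac{1}{i\hbar}\bigl(\boldsymbol q^{m_V/2}-\boldsymbol q^{-m_V/2}\bigr)$ with $\boldsymbol q=e^{i\hbar}$. For the type~(II) vertices, which parametrise $w_E$-fold covers of an exceptional $\mathbb P^1$ totally ramified at a point, the $\lambda_g$-weighted contribution is a genus-$g$ Hodge integral whose generating function is the standard $\sin$-evaluation of Bryan--Pandharipande, producing exactly the refined leg factor $m_E(\boldsymbol q)=\tfrac{(-1)^{w_E}}{w_E}\cdot\tfrac{i\hbar}{\boldsymbol q^{w_E/2}-\boldsymbol q^{-w_E/2}}$. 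Multiplying these series and absorbing the adjacent edge weights into the refined vertex factors precisely as the identity $m_V=\prod_{E\to V}w_E\,N_V^{\mathrm{tor}}$ does in the unrefined proof of $N_h=\mathrm{Mult}(h)$ reassembles the $\boldsymbol q$-refined multiplicity $m_h(\boldsymbol q)$; summing over $\mathfrak H^\beta_{p,q}(P)$ (equivalently $\tilde{\mathfrak H}^\beta_{p,q}(P)$ via Lemma~\ref{lem:tilde}) then gives the right-hand side, whose $\hbar\to 0$ limit specialises to the genus-zero correspondence of Theorem~\ref{thm:degmax}.

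\textbf{Main obstacle.} I expect the local identification of the previous paragraph to be the decisive step: importing Bousseau's refined vertex correspondence into the present logarithmic degeneration setup and matching its normalisation with Proposition~\ref{prop:N}, together with the Hodge-integral evaluation of the type~(II) covers. Technically, the point requiring the most care is the compatibility of the perfect obstruction theories with the $\lambda_g$ insertion under the cut morphism of \cite{KLR} --- that is, verifying that the splitting $\mathbb E=\bigoplus_V\mathbb E_V$ over the compact-type locus is compatible with $\mathrm{cut}^\star\delta^!$ so that the genus generating series really does factorise vertex by vertex. Once the vanishing of $\lambda_g$ off compact type and this factorisation are secured, the remaining steps are the direct refined analogues of Propositions~\ref{prop:N} and~\ref{prop:broken}.
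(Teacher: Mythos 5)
Your proposal is correct and takes essentially the same approach as the paper's proof: both use the vanishing of $\lambda_g$ off the compact-type locus to reduce to tree-shaped tropical types with the genus concentrated at vertices, and then identify the local contributions with the refined vertex factors $m_V(\boldsymbol{q})$ via \cite{Bou1}, Proposition 29, and the refined leg factors $m_E(\boldsymbol{q})$ via \cite{Bou2}, Proposition 5.1 (which is precisely the Bryan--Pandharipande Hodge-integral evaluation you invoke for the type (II) multiple covers). The paper's proof is a compressed version of your argument, deferring the degeneration and gluing bookkeeping, as well as the obstruction-theory compatibility you flag as the main obstacle, to Bousseau's results.
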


\begin{proof}
Consider a stable log map in $\mathscr{M}(X,\beta^g)$ and let $h : \Gamma \rightarrow B$ be its tropicalization. The genus of $h$ is $g_h = g_\Gamma + \sum_V g_V$, where $g_\Gamma$ is the genus of the graph $\Gamma$ and $g_V$ is the genus attached to a vertex $V$. Using gluing and vanishing properties of lambda classes, Bousseau showed in \cite{Bou1} that $\Gamma$ is still a tree ($g_\Gamma=0$), i.e., all contributions to $g_h$ come from vertices. Hence, $h$ maps to an element of $\mathfrak{H}_\beta$ by forgetting genera at vertices $g_V$. So we can sum over $\mathfrak{H}_{p,q}^\beta(P)$, but have to consider $\boldsymbol{q}$-refined contributions of vertices. By \cite{Bou1}, Proposition 29, the contribution of a vertex $V$ with classical multiplicity $m_V$ is $m_V(\boldsymbol{q})$. By \cite{Bou2}, Proposition 5.1, the contribution of a bounded leg $L$ is $m_L(\boldsymbol{q})$.
\end{proof}

To obtain a higher genus version of Theorem \ref{thm:main}, we have to $\boldsymbol{q}$-refine the slab functions in the initial wall structure $\mathscr{S}_0$. For $\boldsymbol{q}$-refined wall structures it turns out to be more convenient to work with the logarithm of such functions. Define the $\boldsymbol{q}$-refined initial wall structure $\mathscr{S}_0(\boldsymbol{q})$ to have the same slabs as $\mathscr{S}_0$ but with slab functions $f_{\mathfrak{p}}=1+z^{(m,0)}$ replaced by $f_{\mathfrak{p}}(\boldsymbol{q})$, where
\[ \textup{log }f_{\mathfrak{p}}(\boldsymbol{q}) = \sum_{k\geq 1}\frac{(-1)^{k+1}i\hbar}{\boldsymbol{q}^{k/2}-\boldsymbol{q}^{-k/2}}z^{(km,0)}. \]
The coefficient of $z^{(km,0)}$ is the $\boldsymbol{q}$-multiplicity of a bounded leg of weight $k$. 

The inductive construction of wall structures and broken lines can be performed with $\textbf{q}$-refined objects. The formulas are the same, the only difference being that all functions carry an additional $\hbar$-variable. Hence, for each finite order $k>0$ we obtain a $\textbf{q}$-refined wall structure $\mathscr{S}_k(\boldsymbol{q})$. Let $\mathscr{S}_\infty(\boldsymbol{q})$ denote the limit $k\rightarrow\infty$.

Let $P$ be a point in an unbounded chamber of $\mathscr{S}_{p+q}(\boldsymbol{q})$. Then a broken line $\mathfrak{b}\in\mathfrak{B}_{p,q}(P)$ has ending monomial $a_{\mathfrak{b}}(\boldsymbol{q})t^{p+q}x^p$ for $x=z^{(-m_{\text{out}},-1)}$. Define $\vartheta_q(\boldsymbol{q})$ and $\alpha_{p,q}^r(\boldsymbol{q})$ similar to $\vartheta_q$ and $\alpha_{p,q}^r$ in {\S}\ref{S:theta}, but with $\boldsymbol{q}$-refined broken lines.

\begin{thm}[Higher genus version of Theorems \ref{thm:main} and \ref{thm:main2}]
\text{ } \\
Write $x=z^{(-m_{\textup{out}},-1)}$ and $t=x^{(0,1)}$. Then
\begin{equation}
\label{eq:1}
\vartheta_q(\boldsymbol{q}) = x^{-q} + \sum_{p=1}^\infty \sum_{g\geq 0} pN_{p,q}^g \hbar^{2g}t^{p+q}x^p
\end{equation}
Moreover, $\alpha_{p,q}^{r}(\boldsymbol{q})=1$ if $r=p+q$ and otherwise
\begin{equation}
\label{eq:2}
\alpha_{p,q}^r(\boldsymbol{q}) = \sum_{g\geq 0}((p-r)N_{p-r,q}^g + (q-r)N_{q-r,p}^g)\hbar^{2g}t^{p+q-r},
\end{equation}
where we define $N_{p,q}^g=0$ whenever $p\leq 0$.
\end{thm}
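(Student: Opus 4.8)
The plan is to run the genus-zero proof of Theorem~\ref{thm:theta} essentially verbatim, replacing every classical ingredient by its $\boldsymbol{q}$-refined analogue and the numbers $N_{p,q}$ by the generating series $\widehat{N}_{p,q}:=\sum_{g\ge 0}N_{p,q}^{g}\hbar^{2g}$. Concretely, the two inputs of the genus-zero argument --- the tropical correspondence (Theorem~\ref{thm:degmax}) and the broken-line formula (Proposition~\ref{prop:broken}) --- get upgraded to $\boldsymbol{q}$-refined versions, after which \eqref{eq:1} drops out and \eqref{eq:2} follows by the same formal manipulation as in genus zero.

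First I would establish the $\boldsymbol{q}$-refined broken-line formula
\[ a_{\mathfrak{b}}(\boldsymbol{q}) = \sum_{h\in\mu^{-1}(\mathfrak{b})} m_h(\boldsymbol{q}), \]
the analogue of Proposition~\ref{prop:broken}. Its proof is identical in shape: at each break point $P'$ of $\mathfrak{b}$ the ratio of consecutive coefficients is the relevant coefficient of $\exp\!\big(|\widebar{m}\wedge\widebar{m}'|\log f_{\mathfrak{p}}(\boldsymbol{q})\big)$, and the telescoping product over all break points reassembles into a sum over disk completions. The only change is that $\log f_{\mathfrak{p}}$ is replaced by its $\boldsymbol{q}$-refined version, for which one needs the $\boldsymbol{q}$-refined scattering lemma
\[ \log f_{\mathfrak{p}}(\boldsymbol{q}) = \sum_{w\ge 1}\sum_{h^{\circ}\in\mathfrak{H}^{\circ}_{\mathfrak{p},w}} w\,m_{h^{\circ}}(\boldsymbol{q})\,z^{(wm_{\mathfrak{p}},0)}, \]
the $\boldsymbol{q}$-deformation of Lemma~\ref{lem:scattering}. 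Granting this, the same $\exp$-expansion shows that the disks attached at a given vertex reassemble into the $\boldsymbol{q}$-refined vertex multiplicity $m_{V_\infty}(\boldsymbol{q})$, yielding the displayed formula for $a_{\mathfrak{b}}(\boldsymbol{q})$.

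With this in hand, \eqref{eq:1} is assembled exactly as in genus zero. By Proposition~\ref{prop:parallel}, for $P$ near infinity the set $\mathfrak{B}_q(P)$ consists of the unbroken straight line, contributing $z^{(qm_{\textup{out}},q)}=x^{-q}$, together with $\coprod_{p\ge 1}\mathfrak{B}_{p,q}(P)$; for $\mathfrak{b}\in\mathfrak{B}_{p,q}(P)$ Proposition~\ref{prop:p+q} gives $z^{m_{\mathfrak{b}}}=t^{p+q}x^{p}$. Summing the broken-line formula over the surjection $\mu:\mathfrak{H}_{p,q}(P)\to\mathfrak{B}_{p,q}(P)$ of Corollary~\ref{cor:bh} and over curve classes $\beta$, and invoking the $\boldsymbol{q}$-refined tropical correspondence of this section (with the factor $p$ entering, as in the classical Theorem~\ref{thm:degmax}, from the marked point of order $p$), gives
\[ \sum_{\mathfrak{b}\in\mathfrak{B}_{p,q}(P)} a_{\mathfrak{b}}(\boldsymbol{q}) = \sum_{h\in\mathfrak{H}_{p,q}(P)} m_h(\boldsymbol{q}) = p\,\widehat{N}_{p,q}, \]
which is the coefficient of $t^{p+q}x^{p}$ in \eqref{eq:1}. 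For \eqref{eq:2} I would argue purely formally, as in Theorem~\ref{thm:theta}: \eqref{eq:1} gives $\vartheta_r(\boldsymbol{q})=x^{-r}+\sum_{a\ge 1}a\,\widehat{N}_{a,r}\,t^{a+r}x^{a}$, so in the product $\vartheta_p(\boldsymbol{q})\vartheta_q(\boldsymbol{q})$ every negative power $x^{-r}$ with $r\ge 1$ occurs only as the leading term of $\vartheta_r(\boldsymbol{q})$. Reading off the coefficient of $x^{-r}$ therefore determines $\alpha_{p,q}^r(\boldsymbol{q})$, and the computation is identical to the classical one under $N\mapsto\widehat{N}$, giving $\alpha_{p,q}^{p+q}(\boldsymbol{q})=1$ and the stated \eqref{eq:2}; the positive powers of $x$ close up automatically since the product lies in the theta ring.

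I expect the main obstacle to be the $\boldsymbol{q}$-refined scattering lemma, that is, verifying that the $\exp$ of the $\boldsymbol{q}$-refined logarithmic slab functions reproduces the refined vertex multiplicities $m_V(\boldsymbol{q})$ through higher-valent vertices --- the refined analogue of the classical identity $m_{V_\infty}=|\widebar{m}\wedge\widebar{m}'|\prod_i w_{E_\infty}(h_i^\circ)$ used in Proposition~\ref{prop:broken}. The delicate point is its compatibility with the higher-valence rule $m_V(\boldsymbol{q})=\prod_{V'}m_{V'}(\boldsymbol{q})$, i.e. that the refined scattering calculus respects the trivalent deformation. The tropical correspondence of this section, resting on the lambda-class vanishing of \cite{Bou1} and \cite{Bou2}, is the deepest input and is taken as already established.
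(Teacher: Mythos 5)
Your proposal is correct and follows essentially the same route as the paper: the $\boldsymbol{q}$-refined scattering lemma upgrades Proposition \ref{prop:broken} to $a_{\mathfrak{b}}(\boldsymbol{q})=\sum_{h\in\mu^{-1}(\mathfrak{b})}m_h(\boldsymbol{q})$, which combined with the higher-genus tropical correspondence (resting on Bousseau's lambda-class results) and the genus-zero formal manipulations yields \eqref{eq:1} and \eqref{eq:2}. The only difference is one of emphasis: the paper simply cites the $\boldsymbol{q}$-refined scattering lemma from \cite{Gra} rather than treating it as an obstacle to be verified, and it computes $\alpha_{p,q}^r(\boldsymbol{q})$ directly from its definition as a sum over pairs of broken lines rather than by coefficient extraction from the product.
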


\begin{proof}
By \cite{Gra}, we have $\textup{log }f_{\mathfrak{p}}(\boldsymbol{q}) = \sum_{w=1}^\infty\sum_{h\in\mathfrak{H}_{\mathfrak{p},w}} m_h(\boldsymbol{q}) z^{(wm_{\mathfrak{p}},0)}$, the $\boldsymbol{q}$-refined version of Lemma \ref{lem:scattering}. As a consequence, we get a $\boldsymbol{q}$-refined version of Proposition \ref{prop:broken}: $a_{\mathfrak{b}}(\boldsymbol{q}) = \sum_{h\in\mu^{-1}(\mathfrak{b})} m_h(\boldsymbol{q})$. Now the statement follows from the definitions of $\vartheta_q(\boldsymbol{q})$ and $\alpha_{p,q}^r(\boldsymbol{q})$.
\end{proof}

\section{Example calculations}									
\label{S:calc}

We use a sage code to calculate the numbers $N_{1,3d-1}(\mathbb{P}^2,dL)$ for $d\leq 4$. It can be found on \href{timgraefnitz.com}{timgraefnitz.com}. In the code, broken lines are implemented in reversed order. We start with point $P$ on an unbounded maximal cell of $\mathscr{S}_{3d}(\mathbb{P}^2)$ and a broken line coming out of this point in the negative of the unique unbounded direction $m_{\textup{out}}$, with attached monomial $z^{qm_{\textup{out}}}$. We can do this, since all broken lines that end in $P$ have to be parallel to $m_{\textup{out}}$. When the broken line hits a wall, we apply the transformation $z^m \mapsto f^{\braket{n,\widebar{m}}}z^m$, where $n$ is the normal direction of the wall. Each term in $f^{\braket{n,\widebar{m}}}z^m$ gives a new possible broken line. The above procedure is applied recursively until the direction of the new broken line is $m_{\textup{out}}$. Then we have found a broken line with asymptotic monomial $z^{qm_{\textup{out}}}$ and ending in $P$. If we add together the coefficients $a_{\mathfrak{b}}$ of the broken lines $\mathfrak{b}$ with asymptotic monomial $z^{qm_{\textup{out}}}$ and resulting monomial $a_{\mathfrak{b}}z^{-pm_{\textup{out}}}$ we get the tropical count $N_{p,q}^{\textup{trop}}(\mathbb{P}^2,dL)$, where $d=(p+q)/3$. Using the tropical correspondence (Theorem \ref{thm:degmax}) we obtain the $2$-marked log Gromov-Witten invariants $N_{p,q}=N_{p,q}(\mathbb{P}^2,dL)$. Figure \ref{fig:sage} shows the broken lines for $d=2$. This gives
\vspace{3mm}
\begin{center}
\renewcommand{\arraystretch}{1.2}
\begin{tabular}{|c|c|c|c|c|} 								\hline
$N_{1,5}=25$	& $N_{2,4}=14$		& $N_{3,3}=9$	& $N_{4,2}=\frac{7}{2}$		& $N_{5,1}=1$	\\ \hline
\end{tabular}
\end{center}

\begin{figure}[h!]
\centering
\begin{tikzpicture}[scale=0.8,rotate=90]
\clip (-3,-1) rectangle (6.6,2);
\draw[dashed] (0.0, 0.5) -- (-1.0, 1.0);
\draw[dashed] (0.0, 0.5) -- (-1.0, 0.0);
\draw[dashed] (-1.5, -0.5) -- (-1.0, 0.0);
\draw[dashed] (-1.5, 1.5) -- (-1.0, 1.0);
\draw[dashed] (-1.5, 1.5) -- (-4.0, 2.0);
\draw[dashed] (-1.5, -0.5) -- (-4.0, -1.0);
\draw[dashed] (-6.0, -1.5) -- (-4.0, -1.0);
\draw[dashed] (-6.0, 2.5) -- (-4.0, 2.0);
\draw[-,gray] (0.0, 0.5) node[fill,cross,inner sep=3pt,rotate=90.0,color=gray]{} -- (0.0, 2.5);
\draw[-,gray] (0.0, 0.5) node[fill,cross,inner sep=3pt,rotate=-90.0,color=gray]{} -- (0.0, -1.5);
\draw[-,gray] (-1.5, 1.5) node[fill,cross,inner sep=3pt,rotate=-18.435,color=gray]{} -- (6.0, -1.0);
\draw[-,gray] (-1.5, -0.5) node[fill,cross,inner sep=3pt,rotate=18.435,color=gray]{} -- (6.0, 2.0);
\draw[-,gray] (-1.5, 1.5) node[fill,cross,inner sep=3pt,rotate=161.565,color=gray]{} -- (-4.5, 2.5);
\draw[-,gray] (-1.5, -0.5) node[fill,cross,inner sep=3pt,rotate=-161.565,color=gray]{} -- (-4.5, -1.5);
\draw[-,gray] (-6.0, 2.5) node[fill,cross,inner sep=3pt,rotate=-9.462,color=gray]{} -- (6.0, 0.5);
\draw[-,gray] (-6.0, -1.5) node[fill,cross,inner sep=3pt,rotate=9.462,color=gray]{} -- (6.0, 0.5);
\draw[-,gray] (-3.0, 2.0) -- (6.0, 2.0);
\draw[-,gray] (0.0, 0.0) -- (6.0, 0.0);
\draw[-,gray] (-3.0, -1.0) -- (6.0, -1.0);
\draw[-,gray] (0.0, 1.0) -- (6.0, 1.0);
\draw[-,gray] (1.5, 0.5) -- (6.0, 0.5);
\draw[-,gray] (3.0, 0.0) -- (6.0, -0.5);
\draw[-,gray] (3.0, 1.0) -- (6.0, 1.5);
\draw[-,gray] (-0.0, -0.0) -- (4.5, -1.5);
\draw[-,gray] (-0.0, -0.0) -- (6.0, -0.0);
\draw[-,gray] (-3.0, 2.0) -- (-3.0, 2.5);
\draw[-,gray] (-0.0, -0.0) -- (6.0, 1.0);
\draw[-,gray] (-0.0, 1.0) -- (4.5, 2.5);
\draw[-,gray] (-0.0, 1.0) -- (6.0, 0.0);
\draw[-,gray] (-0.0, 1.0) -- (6.0, 1.0);
\draw[-,gray] (-3.0, -1.0) -- (-3.0, -1.5);
\fill[black!40!green] (5.9, 0.382) circle (2pt);
\draw[black!40!green,line width=1pt] (5.9, 0.382) -- (1.146, 0.382);
\draw[black!40!green,line width=1pt] (1.146, 0.382) -- (1.429, 0.524);
\draw[black!40!green,line width=1pt] (1.429, 0.524) -- (6, 0.524) node[above]{$5$};
\fill[black!40!green] (5.9, 0.382) circle (2pt);
\draw[black!40!green,line width=1pt] (5.9, 0.382) -- (0.0, 0.382);
\draw[black!40!green,line width=1pt] (0.0, 0.382) -- (-0.573, -0.191);
\draw[black!40!green,line width=1pt] (-0.573, -0.191) -- (0.0, -0.076);
\draw[black!40!green,line width=1pt] (0.0, -0.076) -- (6, -0.076) node[above]{$5$};
\fill[black!40!green] (5.9, 0.382) circle (2pt);
\draw[black!40!green,line width=1pt] (5.9, 0.382) -- (0.0, 0.382);
\draw[black!40!green,line width=1pt] (0.0, 0.382) -- (-1.058, -0.676);
\draw[black!40!green,line width=1pt] (-1.058, -0.676) -- (6, -0.676) node[above]{$5$};
\fill[black!40!green] (5.9, 0.382) circle (2pt);
\draw[black!40!green,line width=1pt] (5.9, 0.382) -- (-0.236, 0.382);
\draw[black!40!green,line width=1pt] (-0.236, 0.618) -- (-0.927, 1.309);
\draw[black!40!green,line width=1pt] (-0.927, 1.309) -- (0.0, 1.124);
\draw[black!40!green,line width=1pt] (0.0, 1.124) -- (6, 1.124) node[above]{$5$};
\fill[black!40!green] (5.9, 0.382) circle (2pt);
\draw[black!40!green,line width=1pt] (5.9, 0.382) -- (-0.236, 0.382);
\draw[black!40!green,line width=1pt] (-0.236, 0.618) -- (-1.342, 1.724);
\draw[black!40!green,line width=1pt] (-1.342, 1.724) -- (6, 1.724) node[above]{$5$};
\draw (-2.7,0.5) node{$N_{1,5}^{\textup{trop}}=25$};
\end{tikzpicture}
\hspace{2mm}
\begin{tikzpicture}[scale=0.8,rotate=90]
\clip (-3,-1) rectangle (6.6,2);
\draw[dashed] (0.0, 0.5) -- (-1.0, 1.0);
\draw[dashed] (0.0, 0.5) -- (-1.0, 0.0);
\draw[dashed] (-1.5, -0.5) -- (-1.0, 0.0);
\draw[dashed] (-1.5, 1.5) -- (-1.0, 1.0);
\draw[dashed] (-1.5, 1.5) -- (-4.0, 2.0);
\draw[dashed] (-1.5, -0.5) -- (-4.0, -1.0);
\draw[dashed] (-6.0, -1.5) -- (-4.0, -1.0);
\draw[dashed] (-6.0, 2.5) -- (-4.0, 2.0);
\draw[-,gray] (0.0, 0.5) node[fill,cross,inner sep=3pt,rotate=90.0,color=gray]{} -- (0.0, 2.5);
\draw[-,gray] (0.0, 0.5) node[fill,cross,inner sep=3pt,rotate=-90.0,color=gray]{} -- (0.0, -1.5);
\draw[-,gray] (-1.5, 1.5) node[fill,cross,inner sep=3pt,rotate=-18.435,color=gray]{} -- (6.0, -1.0);
\draw[-,gray] (-1.5, -0.5) node[fill,cross,inner sep=3pt,rotate=18.435,color=gray]{} -- (6.0, 2.0);
\draw[-,gray] (-1.5, 1.5) node[fill,cross,inner sep=3pt,rotate=161.565,color=gray]{} -- (-4.5, 2.5);
\draw[-,gray] (-1.5, -0.5) node[fill,cross,inner sep=3pt,rotate=-161.565,color=gray]{} -- (-4.5, -1.5);
\draw[-,gray] (-6.0, 2.5) node[fill,cross,inner sep=3pt,rotate=-9.462,color=gray]{} -- (6.0, 0.5);
\draw[-,gray] (-6.0, -1.5) node[fill,cross,inner sep=3pt,rotate=9.462,color=gray]{} -- (6.0, 0.5);
\draw[-,gray] (-3.0, 2.0) -- (6.0, 2.0);
\draw[-,gray] (0.0, 0.0) -- (6.0, 0.0);
\draw[-,gray] (-3.0, -1.0) -- (6.0, -1.0);
\draw[-,gray] (0.0, 1.0) -- (6.0, 1.0);
\draw[-,gray] (1.5, 0.5) -- (6.0, 0.5);
\draw[-,gray] (3.0, 0.0) -- (6.0, -0.5);
\draw[-,gray] (3.0, 1.0) -- (6.0, 1.5);
\draw[-,gray] (-0.0, -0.0) -- (4.5, -1.5);
\draw[-,gray] (-0.0, -0.0) -- (6.0, -0.0);
\draw[-,gray] (-3.0, 2.0) -- (-3.0, 2.5);
\draw[-,gray] (-0.0, -0.0) -- (6.0, 1.0);
\draw[-,gray] (-0.0, 1.0) -- (4.5, 2.5);
\draw[-,gray] (-0.0, 1.0) -- (6.0, 0.0);
\draw[-,gray] (-0.0, 1.0) -- (6.0, 1.0);
\draw[-,gray] (-3.0, -1.0) -- (-3.0, -1.5);
\fill[black!40!green] (5.9, 0.382) circle (2pt);
\draw[black!40!green,line width=1pt] (5.9, 0.382) -- (1.146, 0.382);
\draw[black!40!green,line width=1pt] (1.146, 0.382) -- (1.323, 0.559);
\draw[black!40!green,line width=1pt] (1.323, 0.559) -- (6, 0.559) node[above]{$8$};
\fill[black!40!green] (5.9, 0.382) circle (2pt);
\draw[black!40!green,line width=1pt] (5.9, 0.382) -- (0.0, 0.382);
\draw[black!40!green,line width=1pt] (0.0, 0.382) -- (-1.236, -0.236);
\draw[black!40!green,line width=1pt] (-2.82, -0.764) -- (-2.292, -0.764);
\draw[black!40!green,line width=1pt] (-2.292, -0.764) -- (-2.646, -0.941);
\draw[black!40!green,line width=1pt] (-2.646, -0.941) -- (6, -0.941);
\draw[black!40!green] (6.01,-0.9) node[above]{$8$};
\fill[black!40!green] (5.9, 0.382) circle (2pt);
\draw[black!40!green,line width=1pt] (5.9, 0.382) -- (0.0, 0.382);
\draw[black!40!green,line width=1pt] (0.0, 0.382) -- (-0.573, -0.191);
\draw[black!40!green,line width=1pt] (-0.573, -0.191) -- (6, -0.191) node[above]{$6$};
\fill[black!40!green] (5.9, 0.382) circle (2pt);
\draw[black!40!green,line width=1pt] (5.9, 0.382) -- (-0.236, 0.382);
\draw[black!40!green,line width=1pt] (-0.236, 0.618) -- (-0.927, 1.309);
\draw[black!40!green,line width=1pt] (-0.927, 1.309) -- (6, 1.309) node[above]{$6$};
\draw (-2.7,0.5) node{$N_{2,4}^{\textup{trop}}=28$};
\end{tikzpicture}
\hspace{2mm}
\begin{tikzpicture}[scale=0.8,rotate=90]
\clip (-3,-1) rectangle (6.6,2);
\draw[dashed] (0.0, 0.5) -- (-1.0, 1.0);
\draw[dashed] (0.0, 0.5) -- (-1.0, 0.0);
\draw[dashed] (-1.5, -0.5) -- (-1.0, 0.0);
\draw[dashed] (-1.5, 1.5) -- (-1.0, 1.0);
\draw[dashed] (-1.5, 1.5) -- (-4.0, 2.0);
\draw[dashed] (-1.5, -0.5) -- (-4.0, -1.0);
\draw[dashed] (-6.0, -1.5) -- (-4.0, -1.0);
\draw[dashed] (-6.0, 2.5) -- (-4.0, 2.0);
\draw[-,gray] (0.0, 0.5) node[fill,cross,inner sep=3pt,rotate=90.0,color=gray]{} -- (0.0, 2.5);
\draw[-,gray] (0.0, 0.5) node[fill,cross,inner sep=3pt,rotate=-90.0,color=gray]{} -- (0.0, -1.5);
\draw[-,gray] (-1.5, 1.5) node[fill,cross,inner sep=3pt,rotate=-18.435,color=gray]{} -- (6.0, -1.0);
\draw[-,gray] (-1.5, -0.5) node[fill,cross,inner sep=3pt,rotate=18.435,color=gray]{} -- (6.0, 2.0);
\draw[-,gray] (-1.5, 1.5) node[fill,cross,inner sep=3pt,rotate=161.565,color=gray]{} -- (-4.5, 2.5);
\draw[-,gray] (-1.5, -0.5) node[fill,cross,inner sep=3pt,rotate=-161.565,color=gray]{} -- (-4.5, -1.5);
\draw[-,gray] (-6.0, 2.5) node[fill,cross,inner sep=3pt,rotate=-9.462,color=gray]{} -- (6.0, 0.5);
\draw[-,gray] (-6.0, -1.5) node[fill,cross,inner sep=3pt,rotate=9.462,color=gray]{} -- (6.0, 0.5);
\draw[-,gray] (-3.0, 2.0) -- (6.0, 2.0);
\draw[-,gray] (0.0, 0.0) -- (6.0, 0.0);
\draw[-,gray] (-3.0, -1.0) -- (6.0, -1.0);
\draw[-,gray] (0.0, 1.0) -- (6.0, 1.0);
\draw[-,gray] (1.5, 0.5) -- (6.0, 0.5);
\draw[-,gray] (3.0, 0.0) -- (6.0, -0.5);
\draw[-,gray] (3.0, 1.0) -- (6.0, 1.5);
\draw[-,gray] (-0.0, -0.0) -- (4.5, -1.5);
\draw[-,gray] (-0.0, -0.0) -- (6.0, -0.0);
\draw[-,gray] (-3.0, 2.0) -- (-3.0, 2.5);
\draw[-,gray] (-0.0, -0.0) -- (6.0, 1.0);
\draw[-,gray] (-0.0, 1.0) -- (4.5, 2.5);
\draw[-,gray] (-0.0, 1.0) -- (6.0, 0.0);
\draw[-,gray] (-0.0, 1.0) -- (6.0, 1.0);
\draw[-,gray] (-3.0, -1.0) -- (-3.0, -1.5);
\fill[black!40!green] (5.8, 0.382) circle (2pt);
\draw[black!40!green,line width=1pt] (5.8, 0.382) -- (1.146, 0.382);
\draw[black!40!green,line width=1pt] (1.146, 0.382) -- (1.146, 0.618);
\draw[black!40!green,line width=1pt] (1.146, 0.618) -- (6, 0.618) node[above]{$9$};
\fill[black!40!green] (5.8, 0.382) circle (2pt);
\draw[black!40!green,line width=1pt] (5.8, 0.382) -- (0.0, 0.382);
\draw[black!40!green,line width=1pt] (0.0, 0.382) -- (-1.146, -0.382);
\draw[black!40!green,line width=1pt] (-1.146, -0.382) -- (6, -0.382) node[above]{$9$};
\fill[black!40!green] (5.8, 0.382) circle (2pt);
\draw[black!40!green,line width=1pt] (5.8, 0.382) -- (0.0, 0.382);
\draw[black!40!green,line width=1pt] (0.0, 0.382) -- (-0.708, 0.146);
\draw[black!40!green,line width=1pt] (-0.708, 0.854) -- (-1.146, 1.146);
\draw[black!40!green,line width=1pt] (-3.27, 1.854) -- (-1.854, 1.618);
\draw[black!40!green,line width=1pt] (-1.854, 1.618) -- (6, 1.618) node[above]{$9$};
\draw (-2.7,0.5) node{$N_{3,3}^{\textup{trop}}=27$};
\end{tikzpicture}
\hspace{2mm}
\begin{tikzpicture}[scale=0.8,rotate=90]
\clip (-3,-1) rectangle (6.6,2);
\draw[dashed] (0.0, 0.5) -- (-1.0, 1.0);
\draw[dashed] (0.0, 0.5) -- (-1.0, 0.0);
\draw[dashed] (-1.5, -0.5) -- (-1.0, 0.0);
\draw[dashed] (-1.5, 1.5) -- (-1.0, 1.0);
\draw[dashed] (-1.5, 1.5) -- (-4.0, 2.0);
\draw[dashed] (-1.5, -0.5) -- (-4.0, -1.0);
\draw[dashed] (-6.0, -1.5) -- (-4.0, -1.0);
\draw[dashed] (-6.0, 2.5) -- (-4.0, 2.0);
\draw[-,gray] (0.0, 0.5) node[fill,cross,inner sep=3pt,rotate=90.0,color=gray]{} -- (0.0, 2.5);
\draw[-,gray] (0.0, 0.5) node[fill,cross,inner sep=3pt,rotate=-90.0,color=gray]{} -- (0.0, -1.5);
\draw[-,gray] (-1.5, 1.5) node[fill,cross,inner sep=3pt,rotate=-18.435,color=gray]{} -- (6.0, -1.0);
\draw[-,gray] (-1.5, -0.5) node[fill,cross,inner sep=3pt,rotate=18.435,color=gray]{} -- (6.0, 2.0);
\draw[-,gray] (-1.5, 1.5) node[fill,cross,inner sep=3pt,rotate=161.565,color=gray]{} -- (-4.5, 2.5);
\draw[-,gray] (-1.5, -0.5) node[fill,cross,inner sep=3pt,rotate=-161.565,color=gray]{} -- (-4.5, -1.5);
\draw[-,gray] (-6.0, 2.5) node[fill,cross,inner sep=3pt,rotate=-9.462,color=gray]{} -- (6.0, 0.5);
\draw[-,gray] (-6.0, -1.5) node[fill,cross,inner sep=3pt,rotate=9.462,color=gray]{} -- (6.0, 0.5);
\draw[-,gray] (-3.0, 2.0) -- (6.0, 2.0);
\draw[-,gray] (0.0, 0.0) -- (6.0, 0.0);
\draw[-,gray] (-3.0, -1.0) -- (6.0, -1.0);
\draw[-,gray] (0.0, 1.0) -- (6.0, 1.0);
\draw[-,gray] (1.5, 0.5) -- (6.0, 0.5);
\draw[-,gray] (3.0, 0.0) -- (6.0, -0.5);
\draw[-,gray] (3.0, 1.0) -- (6.0, 1.5);
\draw[-,gray] (-0.0, -0.0) -- (4.5, -1.5);
\draw[-,gray] (-0.0, -0.0) -- (6.0, -0.0);
\draw[-,gray] (-3.0, 2.0) -- (-3.0, 2.5);
\draw[-,gray] (-0.0, -0.0) -- (6.0, 1.0);
\draw[-,gray] (-0.0, 1.0) -- (4.5, 2.5);
\draw[-,gray] (-0.0, 1.0) -- (6.0, 0.0);
\draw[-,gray] (-0.0, 1.0) -- (6.0, 1.0);
\draw[-,gray] (-3.0, -1.0) -- (-3.0, -1.5);
\fill[black!40!green] (5.8, 0.382) circle (2pt);
\draw[black!40!green,line width=1pt] (5.8, 0.382) -- (1.146, 0.382);
\draw[black!40!green,line width=1pt] (1.146, 0.382) -- (0.792, 0.736);
\draw[black!40!green,line width=1pt] (0.792, 0.736) -- (6, 0.736) node[above]{$8$};
\fill[black!40!green] (5.8, 0.382) circle (2pt);
\draw[black!40!green,line width=1pt] (5.8, 0.382) -- (0.0, 0.382);
\draw[black!40!green,line width=1pt] (0.0, 0.382) -- (-2.0, -0.618);
\draw[black!40!green,line width=1pt] (-2.82, -0.764) -- (6, -0.764) node[above]{$6$};
\draw (-2.7,0.5) node{$N_{4,2}^{\textup{trop}}=14$};
\end{tikzpicture}
\hspace{2mm}
\begin{tikzpicture}[scale=0.8,rotate=90]
\clip (-3,-1) rectangle (6.6,2);
\draw[dashed] (0.0, 0.5) -- (-1.0, 1.0);
\draw[dashed] (0.0, 0.5) -- (-1.0, 0.0);
\draw[dashed] (-1.5, -0.5) -- (-1.0, 0.0);
\draw[dashed] (-1.5, 1.5) -- (-1.0, 1.0);
\draw[dashed] (-1.5, 1.5) -- (-4.0, 2.0);
\draw[dashed] (-1.5, -0.5) -- (-4.0, -1.0);
\draw[dashed] (-6.0, -1.5) -- (-4.0, -1.0);
\draw[dashed] (-6.0, 2.5) -- (-4.0, 2.0);
\draw[-,gray] (0.0, 0.5) node[fill,cross,inner sep=3pt,rotate=90.0,color=gray]{} -- (0.0, 2.5);
\draw[-,gray] (0.0, 0.5) node[fill,cross,inner sep=3pt,rotate=-90.0,color=gray]{} -- (0.0, -1.5);
\draw[-,gray] (-1.5, 1.5) node[fill,cross,inner sep=3pt,rotate=-18.435,color=gray]{} -- (6.0, -1.0);
\draw[-,gray] (-1.5, -0.5) node[fill,cross,inner sep=3pt,rotate=18.435,color=gray]{} -- (6.0, 2.0);
\draw[-,gray] (-1.5, 1.5) node[fill,cross,inner sep=3pt,rotate=161.565,color=gray]{} -- (-4.5, 2.5);
\draw[-,gray] (-1.5, -0.5) node[fill,cross,inner sep=3pt,rotate=-161.565,color=gray]{} -- (-4.5, -1.5);
\draw[-,gray] (-6.0, 2.5) node[fill,cross,inner sep=3pt,rotate=-9.462,color=gray]{} -- (6.0, 0.5);
\draw[-,gray] (-6.0, -1.5) node[fill,cross,inner sep=3pt,rotate=9.462,color=gray]{} -- (6.0, 0.5);
\draw[-,gray] (-3.0, 2.0) -- (6.0, 2.0);
\draw[-,gray] (0.0, 0.0) -- (6.0, 0.0);
\draw[-,gray] (-3.0, -1.0) -- (6.0, -1.0);
\draw[-,gray] (0.0, 1.0) -- (6.0, 1.0);
\draw[-,gray] (1.5, 0.5) -- (6.0, 0.5);
\draw[-,gray] (3.0, 0.0) -- (6.0, -0.5);
\draw[-,gray] (3.0, 1.0) -- (6.0, 1.5);
\draw[-,gray] (-0.0, -0.0) -- (4.5, -1.5);
\draw[-,gray] (-0.0, -0.0) -- (6.0, -0.0);
\draw[-,gray] (-3.0, 2.0) -- (-3.0, 2.5);
\draw[-,gray] (-0.0, -0.0) -- (6.0, 1.0);
\draw[-,gray] (-0.0, 1.0) -- (4.5, 2.5);
\draw[-,gray] (-0.0, 1.0) -- (6.0, 0.0);
\draw[-,gray] (-0.0, 1.0) -- (6.0, 1.0);
\draw[-,gray] (-3.0, -1.0) -- (-3.0, -1.5);
\fill[black!40!green] (5.8, 0.382) circle (2pt);
\draw[black!40!green,line width=1pt] (5.8, 0.382) -- (1.146, 0.382);
\draw[black!40!green,line width=1pt] (1.146, 0.382) -- (-0.27, 1.09);
\draw[black!40!green,line width=1pt] (-0.27, 1.09) -- (6, 1.09) node[above]{$5$};
\draw (-2.7,0.5) node{$N_{5,1}^{\textup{trop}}=5$};
\end{tikzpicture}
\caption{Broken lines used to compute $N_{p,q}(\mathbb{P}^2,2L)$. The numbers above the infinite segments are the coefficients $a_{\mathfrak{b}}$.}
\label{fig:sage}
\end{figure}
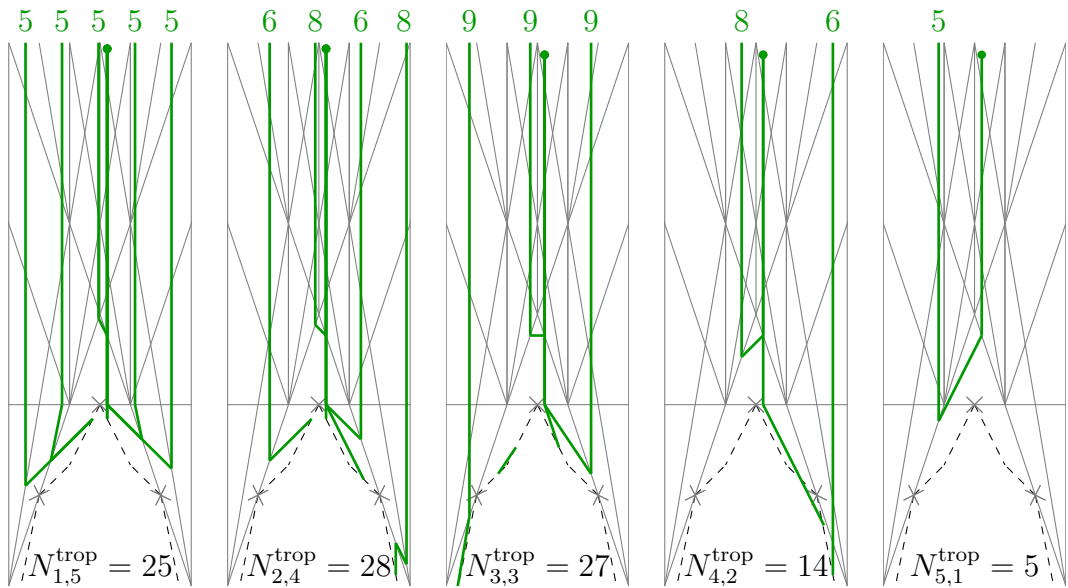



\end{document}